\newcolumntype{C}{>{\centering\arraybackslash}X}
\theoremstyle{plain}
\newtheorem{thm}{Theorem}[section]
\newtheorem{lem}[thm]{Lemma}
\newtheorem{rem}[thm]{Remark}
\newtheorem{ass}[thm]{Assumption}
\numberwithin{equation}{section}
\newcommand{\mcc}{\mathcal{C}}
\newcommand{\mcf}{\mathcal{F}}
\newcommand{\mcg}{\mathcal{G}}
\newcommand{\mci}{\mathcal{I}}
\newcommand{\mcj}{\mathcal{J}}
\newcommand{\mcl}{\mathcal{L}}
\newcommand{\mcn}{\mathcal{N}}
\newcommand{\mfb}{\mathfrak{b}}
\newcommand{\mfm}{\mathfrak{m}}
\newcommand{\mbbh}{\mathbb{H}}
\newcommand{\mbbn}{\mathbb{N}}
\newcommand{\mbbr}{\mathbb{R}}
\newcommand{\mbby}{\mathbb{Y}}
\newcommand{\al}{\alpha} \newcommand{\lam}{\lambda} \newcommand{\ep}{\epsilon} 
\newcommand{\vp}{\varphi} \newcommand{\del}{\delta} \newcommand{\sig}{\sigma}
\newcommand{\D}{\Delta} \newcommand{\Sig}{\Sigma} 
 \newcommand{\Gam}{\Gamma}
\newcommand{\vt}{\vartheta} 
\newcommand{\p}{\partial}
\newcommand{\cil}{\xrightarrow{\mcl}} % <- Convergence in law
\newcommand{\cip}{\xrightarrow{p}} % <- Convergence in probability
\newcommand{\argmax}{\mathop{\rm argmax}}
\def\ds#1{\displaystyle{#1}}
\def\nn{\nonumber}
\newcommand{\pr}{P} \newcommand{\E}{E}
\newcommand{\cov}{{\rm Cov}}
\newcommand{\diag}{\mathrm{diag}}
\def\wp{Wiener process}
\newcommand{\wh}{\widehat}
\newcommand{\taues}{\widehat{\tau}}
\newcommand{\sumj}{\sum_{j=1}^{n}}
\newcommand{\sumi}{\sum_{i=1}^{N}}
\newcommand{\ny}{\overline{y}}
\title[Quasi-likelihood inference for SDE with mixed-effects]{Quasi-likelihood inference for SDE with mixed-effects observed at high frequency}
\author[M. Delattre]{Maud Delattre}
\address{Universit\'{e} Paris-Saclay, INRAE, MaIAGE, 78350, Jouy-en-Josas, France.}
\author[H. Masuda]{Hiroki Masuda}
\address{Graduate School of Mathematical Sciences, University of Tokyo, 3-8-1 Komaba Meguro-ku Tokyo 153-8914, Japan.}
\email{hmasuda@ms.u-tokyo.ac.jp}
\date{\today}
\keywords{Non-Gaussian quasi-likelihood inference, normal variance-mean mixture, SDE with mixed effects.}
\begin{document}
\begin{abstract}
%(tentative)
We consider statistical inference for a class of dynamic mixed-effect models described by stochastic differential equations whose drift and diffusion coefficients simultaneously depend on fixed- and random-effect parameters. Assuming that each process is observed at high frequency and the number of individuals goes to infinity, we propose a stepwise inference procedure and prove its theoretical properties. The methodology is based on suitable quasi-likelihood functions by profiling the random effect in the diffusion coefficient at the first stage, and then taking the marginal distribution in the drift coefficient in the second stage, resulting in a fully explicit and computationally convenient method.
\end{abstract}

\maketitle

% !TEX root = main_MSDE_HF2.tex

%%%%%
%%%%%
\section{Introduction}

Mixed-effects models play a pivotal role in various scientific domains, facilitating the precise analysis of repeated observations among individuals. Despite their utility, parameter estimation in these models presents significant technical challenges. However, their application improves our understanding of dynamic biological phenomena, making them invaluable tools in numerous fields. Moreover, there is a growing interest in employing mixed-effects models coupled with stochastic differential equations (SDE). Originating from efforts in pharmacokinetics (see \textit{, e.g.} references in \cite{donnet2013review}), where the aim was to refine the modeling of intrinsic variability in drug distribution within organisms, these models have now branched out into diverse applications within life sciences. From the intricate workings of neurobiology (\cite{ditlevsen2013introduction}) to the intricate dance of epidemic dynamics prediction (\cite{narci2022inference}), mixed-effects models offer invaluable insight. Tailored estimation methodologies have been developed to fully exploit the potential of these models. Although numerous algorithms are proposed in the literature (see \textit{e.g.} references in \cite{picchiniWeb}), addressing parameter estimation in these models from a theoretical point of view remains rare. Indeed, maximum likelihood estimation, a favored method, poses challenges due to its rare explicit form. Only in select works do the authors provide specific results. Ditlevsen and De Gaetano are mainly limited to Brownian motion with random drift \cite{ditlevsen2005mixed}. Then Delattre, Samson, Genon-Catalot, and Lar\'{e}do focused on real diffusions with specific characteristics and Gaussian and inverse gamma random effects in the drift and in the diffusion coefficient, respectively (\cite{DelGenLar18-2}, \cite{delattre2018parametric}, \cite{delattre2013maximum}, \cite{delattre2015estimation}). Covariates and Gaussian random effects in the drift were considered in \cite{GroSamDit20} in multivariate diffusion models. However, there still remains a gap in understanding the asymptotic behavior of parameter estimators in more general mixed-effects diffusion models.

Furthermore, prevalent assumptions in mixed-effects model literature, such as random effects following Gaussian distributions, may not always hold in practice. Recent advancements propose extensions, introducing distributions like the generalized hyperbolic distribution to better accommodate variability (see \textit{e.g.} \cite{AsaBolDigWal20} and \cite{FujMas23}). 
%However, inference in models with non-Gaussian random effects introduces new challenges, particularly concerning numerically complex functions like the Bessel function of the second kind and its logarithm.

Our work revisits the notion of utilizing normal variance-mean mixture-type random effects in broader mixed-effects models defined by stochastic differential equations. This allows for the exploration of a wider class of mixed-effects diffusion models compared to the previous literature. We also consider that both drift and diffusion coefficients depend on random and fixed effects simultaneously, presenting a novel generalization. We propose a novel parameter estimation method and provide theoretical insights into the asymptotic behavior of these estimators. Our estimation method diverges from the quasi-likelihood approach, offering a more accessible numerical procedure. This trade-off ensures stability and ease of implementation in high-frequency frameworks.
%while sacrificing some efficiency compared to maximum likelihood estimation

The rest of this paper is organized as follows.
Section \ref{hm:sec_setup.assump} describes the precise model setup and assumptions.
Section \ref{hm:sec_p.m_qmlf} introduces the stepwise estimation strategy with two kinds of quasi-(log-)likelihood functions, and presents asymptotic properties of the associated estimators. In Section \ref{sec:simus}, we present numerical experiments illustrating the theoretical results, followed in Section \ref{sec:appli} by some empirical applications.
%%% HM added:
Section \ref{sec:discussion} provides some discussion.
Finally, Section \ref{hm:sec_proofs} presents the proofs of the main result.

% !TEX root = main_MSDE_HF2.tex

\section{Setup and assumptions}
\label{hm:sec_setup.assump}

%%%%%
\subsection{Underlying dynamics}

Throughout, $T>0$ denotes a fixed constant.
Given an underlying filtered probability space $(\Omega,\mcf,(\mcf_t)_{t\le T},\pr)$ satisfying the usual hypothesis, we consider the following system of stochastic integral equations:
\begin{align}
Y_i(t) &= Y_i(0) + \tau_i \,
% \int_0^t a(X_i(s),Y_i(s))ds 
\left(\vp_{f} \cdot \int_0^t a_f(Y_i(s))ds + \vp_{r,i} \cdot \int_0^t a_r(Y_i(s))ds \right)
\nn\\
&{}\qquad +\sqrt{\tau_i}\int_0^t c(Y_i(s);\eta) dw_i(s),
% \qquad i=1,\dots,N,\quad t\in[0,T],
\label{eq:Y_sde}
\end{align}
for $i=1,2,\dots$ and $t\in[0,T]$, where the ingredients are given as follows.

\begin{itemize}

\item The driving noise processes 
$w_1(\cdot),w_2(\cdot),\dots$ are i.i.d. standard one dimensional {\wp es} independent of the i.i.d. initial random variables $Y_1(0)$, $Y_2(0)$, $\dots$ in $\mbbr$. We assume that for every $K>0$,
\begin{equation}\label{hm:X0Y0-moments}
    \E\big[|Y_1(0)|^K\big] <\infty.
\end{equation}

\item $\eta \in\Theta_\eta\subset\mbbr^{p_\eta}$ and $\vp_f \in \Theta_{\vp_f}\subset\mbbr^{p_{\vp_f}}$ are fixed-effect parameters.

\item $(\tau_1,\vp_{r,1}), (\tau_2,\vp_{r,2}), \dots\in (0,\infty) \times \mbbr^{p_{\vp_r}}$ are i.i.d. random-effect parameters whose common distribution is assumed to satisfy the following conditions.
\begin{itemize}
    \item $\tau_i$ and $\vp_{r,i}$ are independent.
    \item $\mcl(\tau_i)$ admits the parametrized Lebesgue density
    \begin{equation}
        \tau_i \sim f(\tau;\theta_\tau)d\tau
    \end{equation}
    % with respect to some $\sig$-finite measure $\nu(d\tau)$ 
    on $(0,\infty)$, where the parameter $\theta_\tau\in\Theta_\tau\subset\mbbr^{p_\tau}$.
    \item $\mcl(\vp_{r,i})$ is non-degenerate Gaussian:
    \begin{equation}
        \vp_{r,i} \sim N_{p_{\vp_r}}(0,\Sig_r)
        \label{hm:vp.iid}
    \end{equation}
    for a positive definite $\Sig_r\in\mbbr^{p_{\vp_r}} \otimes \mbbr^{p_{\vp_r}}$.
\end{itemize}

\item $a(y):=(a_f(y),a_r(y)):\,\mbbr\to\mbbr^{p_{\vp}}$ and $c(y;\eta):\,\mbbr\times\Theta_\eta\to\mbbr$ are known measurable functions except for $\eta$.
\end{itemize}

The random-effect parameter $\tau_i$ represents the model-time scale of the $i$th individual relative to the common period $[0,T]$. How to incorporate the random-effect parameter $\tau_i$ in the drift part could be many things: see Remarks \ref{hm:rem_diff-1} and \ref{hm:rem_re-2}.
Given the individual model-time scales, the random-effect parameter $\vp_{r,i}$ represents variability in trend characteristics across $N$ individuals.
Let $p_\vp:=p_{\vp_f}+p_{\vp_r}$.
To compactify the notation, we denote
\begin{equation}\label{hm:vp.param_notation}
\begin{split}
    & \vp_i=(\vp_f,\vp_{r,i})\in\mbbr^{p_\vp}, \\
    & \mu=(\vp_{f},0)\in\mbbr^{p_\vp}, \quad \Sig=\diag(O,\Sig_r)\in\mbbr^{p_\vp}\otimes\mbbr^{p_\vp}.
\end{split}
\end{equation}
% The random variable $\vp_i \sim N_{p_\vp}(\mu,\Sig)$ is truly degenerate.
That is, we assume that we know a priori which component of the random effect parameter $\vp_i$ is truly random; if not, the statistical model exhibits a non-standard feature that the true value of the covariance matrix $\Sig$ can lie on the boundary of the parameter space and we will not touch this model-selection problem here; in such case, the regularization will help, see, for example, \cite{BonKriGho10}, \cite{FanLi12}, and \cite{YosYos24}.
We note, however, that our setting is implicitly able to consider the cases of non-degenerate $\vp_i$ when $a_f(y)=a_r(y)$ (hence $p_{\vp_f}=p_{\vp_r}$); in this case, we may conveniently read $\vp_i \sim N_{p_{\vp_f}}(\vp_f,\Sig_r)$.

All the stochastic processes introduced above will be $(\mcf_t)$-adapted; in \eqref{hm:Ft_def} below, we will impose a specific structural assumption on $\mcf_t$. 
We denote by $\vt$ the statistical parameter of interest:
\begin{align}
    \vt:=(\eta,\theta_\tau,\vp_f,\Sig_r) 
    &\in \Theta_\eta \times \Theta_\tau \times \Theta_{\vp_f} \times \Theta_{\Sig_r} 
    \nn\\
    &\subset \mbbr^{p_\eta}\times\mbbr^{p_\tau}\times\mbbr^{p_{\vp_f}}\times\mbbr^{p_{\vp_r}\times p_{\vp_r}}.
\end{align}
% where $\mbbr^{p_{\vp_r}\times p_{\vp_r}}_{+}$ denotes the space of all $p_{\vp_r}\times p_{\vp_r}$-symmetric positive definite matrices; 
We write $\Theta:=\Theta_\eta \times \Theta_\tau \times \Theta_{\vp_f} \times \Theta_{\Sig_r}$ for the full-parameter space. We assume that $\Theta_\eta \times \Theta_\tau$, $\Theta_{\vp_f}$, and $\Theta_{\Sig_r}$ are a bounded convex domain and that any $A\in\overline{\Theta_{\Sig_r}}$ is positive definite. 
% The following form of the Sobolev inequality holds (see, for example, \cite[Theorem 1.4.2]{Ada73}):
% for any $\mcc^1(\Theta_\eta \times \Theta_\tau)$-function $\psi$ and $\al > \dim(\Theta_\eta \times \Theta_\tau)= p_\eta + p_\tau$, we have
% \begin{equation}
% \label{hm:sobolev.ineq}
%     \sup_{(\eta,\theta_\tau)} |\psi(\eta,\theta_\tau)|^\al \le C \int_{\Theta_\tau}\int_{\Theta_\eta} (|\psi(\eta,\theta_\tau)|^\al +|\p_{(\eta,\theta_\tau)} \psi(\eta,\theta_\tau)|^\al)d\eta d\theta_\tau
% \end{equation}
% for some universal constant $C=C(\Theta_\eta, \Theta_\tau, \al) >0$.
We denote the true value of $\vt$ by 
% $\vt_0:=(\eta_{0},\theta_{\tau,0},\mu_{0},\Sig_{r,0}) \in \Theta$ 
\begin{equation}
    \vt_0:=(\eta_{0},\theta_{\tau,0},\vp_{f,0},\Sig_{r,0}) \in \Theta,
\end{equation}
assumed to exist; also, we will write $\mu_0=(\vp_{f,0},0)$ and $\Sig_0=\diag(O,\Sig_{r,0})$.

With the setup mentioned above, our objective is to estimate $\vt_0$ based on a high-frequency sample from $N$ independent individuals:
\begin{equation}
\left\{Y_i(t_{j}):~ 1\le i\le N,\quad 1\le j\le n\right\},
\nonumber
\end{equation}
where $t_{j}=jh$ with $h=h_n:=T/n$ for a fixed $T>0$; that is, all individuals are observed at equally spaced time points.

% \medskip
\subsection{Basic notation}

In the rest of this paper, we will use the following notation and conventions.

\begin{itemize}
%\item $C_n$ denotes a universal positive constant only depending on $n$, which may change at each appearance.
\item $C$ denotes a universal positive constant, which may change at each appearance.
\item $a_{N,n} \lesssim b_{N,n}$ means $a_{n}\le Cb_{n}$ for every $n,N$ large enough.
\item $\p_{a}^{k}$ denotes the $k$-times partial differentiation with respect to variable $a$.
\item For a matrix $A$, we write $A^{\otimes 2} = AA^\top$, where $\top$ stands for the transposition, and $\lam_{\min}(A)$ denotes the minimum eigenvalue of $A$ if it is square.

\item $\phi_k(\cdot;m,V)$ for positive definite $V$ denotes the Gaussian $N_k(m,V)$-density; for $k=1$, we omit the subscript $k=1$ with $\phi(\cdot):=\phi(\cdot;0,1)$.

\item $S(y;\eta):=c^2(y;\eta)$, the squared diffusion coefficient.

\item For any process $\xi_i(t)$,
\begin{equation}
    \xi_{ij}:=\xi_i(t_{j}), \quad \xi_{i,j-1}:=\xi_i(t_{j-1}), \quad 
    \D_j \xi_i := \xi_{ij}-\xi_{i,j-1}.
\end{equation}

\item For any measurable function $\mathsf{f}$ on $\mbbr\times\overline{\Theta}$, we write
\begin{align}
 \mathsf{f}_{i}(t;\vt) &= \mathsf{f}(Y_i(t);\vt),
 \nn\\
 \mathsf{f}_{i,j-1}(\vt) &= \mathsf{f}(Y_i(t_{j-1});\vt),
\end{align}
with simply $\mathsf{f}_{i}(t)$ and $\mathsf{f}_{i,j-1}$, respectively, when $\vt=\vt_0$ or the function $\mathsf{f}$ does not depend on $\vt$; 
% for example, $c_{i,j-1}(\eta):=c(X_i(t_{j-1}),Y_i(t_{j-1});\eta)$ and $c_{i,j-1}:=c(X_i(t_{j-1}),Y_i(t_{j-1});\eta_{0})$.
for example, $S_i(t;\eta):=S(Y_i(t);\eta)$, $S_i(t):=S_i(t;\eta_0)$, $S_{i,j-1}:=S_i(t_{j-1})$, and so on.

\item The normalized $j$th increment of $Y_i$ is denoted by
\begin{equation}
    \ny_{ij} = h^{-1/2}\D_j Y_i.
\end{equation}

\item $\int_{j}:=\int_{t_{j-1}}^{t_{j}}=\int_{(j-1)T/n}^{jT/n}$.
\end{itemize}

%%%%%
\subsection{Assumptions}

This section summarises the regularity conditions assumed throughout this paper. Unless otherwise mentioned, all the asymptotics will be taken for $n\to\infty$ and $N=N_n\to\infty$ in such a way that
\begin{equation}\label{hm:sampling.design}
    n^{\mathsf{a}'} \lesssim N \lesssim n^{\mathsf{a}''},
\end{equation}
where $\mathsf{a}'$ and $\mathsf{a}''$ are positive constants satisfying that $0<\mathsf{a}'\le\mathsf{a}''<1$.
In particular, \eqref{hm:sampling.design} implies that %$N/n\to0$.
\begin{equation}\label{hm:N/n->0}
    \frac{N}{n}\to 0.
\end{equation}

\begin{ass}[Regularity of the coefficients]
\label{hm:A_regul}
We have
    \begin{equation}\nn
        \max_{k\in\{0,1,2\}} \max_{l\in\{0,1,2,3\}}\left(
        \big|\p_{y}^k a(y)\big| + \sup_{\eta}\big|\p_{\eta}^l \p_{y}^k c(y;\eta)\big| 
        \right) < \infty,
    \end{equation}
    % for $k\in\{0,1,2\}$ and $l\in\{0,1,2,3\}$, 
    \begin{equation}
        \inf_{(y,\eta)}c(y;\eta)>0.
    \end{equation}
\end{ass}

% Under Assumption \ref{hm:A_regul}, for each $i\ge 1$ there exists a unique strong solution $X_i=(X_i(t))_{t\le T}$, each $X_i(t)$ being $\sig(X_i(0),\overline{w}_i(s):\, s\le t)$-measurable. 
We assume that
\begin{equation}\label{hm:Ft_def}
    \mcf_t = \bigvee_{i\in\mbbn}\mcf_{i,t},
    \qquad
    \mcf_{i,t}:=\sig\left(\tau_i,\vp_i,Y_i(0),w_i(s):\,s\le t\right) \vee \mcn
\end{equation}
for $t\le T$ for the underlying filtration $(\mcf_t)_{t\le T}$, where $\mcn$ denotes the set of all $\mcf$-measurable $\pr$-null sets.

Note that the random elements $(\tau_1,\vp_1,Y_1(\cdot)),(\tau_2,\vp_2,Y_2(\cdot)),\dots$ are i.i.d. 
As in \cite[Lemma 1]{delattre2018parametric}, by It\^{o}'s formula with the moment conditions \eqref{hm:X0Y0-moments} and \eqref{hm:tau-moments} below, and also by the boundedness of $(a,b,c)$ and the Burkholder inequality, it is not difficult to verify that for every $K>0$,
\begin{align}\label{hm:y.ij_mb}
\sup_{t\in[0,T]} \E\big[|Y_1(t)|^K\big] 
+ \sup_{n\ge 1}\max_{j\le n}
\sup_{t\in(t_{j-1},t_{j}]}
\E\left[ \left| \frac{1}{\sqrt{h}}(Y_1(t) - Y_1(t_{j-1})) \right|^K \right]
< \infty.
\end{align}

\begin{ass}[Identifiability]
\label{hm:A_iden}
The following holds for each $i\le N$:
\begin{enumerate}
        \item With probability one, the function $\ds{t \mapsto \frac{S_i(t)}{S_i(t;\eta)}}$ is constant on $[0,T]$ if and only if $\eta=\eta_{0}$;
        \item With probability one, the function $\ds{t \mapsto \frac{\p_{\eta}S_i(t)}{S_i(t;\eta_{0})}}$ is not constant on $[0,T]$;
        \item The matrix $\ds{\int_0^T \frac{a_i^{\otimes 2}(t)}{S_i(t)}dt}$ $(i\ge 1)$ is a.s. positive definite.
\end{enumerate}
\end{ass}

\begin{rem}
\label{hm:rem_X}
~
\begin{enumerate}
    \item We assumed the boundedness of the coefficients in Assumption \ref{hm:A_regul}. What is essential in our technical proofs are the estimates \eqref{hm:y.ij_mb}. As was mentioned in \cite[p.1932]{delattre2018parametric}, if the support of $\mcl(\tau_1,\vp_1)$ in $\mbbr^{1+p_\vp}$ is essentially bounded, then it is possible to relax (1) and (3) in Assumption \ref{hm:A_regul} so that the coefficient $a(y)$ is at most polynomial growth. The reason is that the unboundedness of the support prevents us from using the standard Gronwall-inequality argument to deduce the moment boundedness of $\E[|Y_i(t)|^K]$ and $\E[|\ny_{ij}|^K]$, which is crucial in the proofs of the main results. 
    
    % \item The assumption that $X_1,X_2,\dots$ are i.i.d. was put for brevity and is not essential. Still, if we drop the identically distributed property, then the first item in Assumption \ref{hm:A_iden} becomes a bit more messy; specifically, we partly need existence of a function $\overline{A}_{11,T}(\eta)=\overline{A}_{11,T}(\eta;\eta_{0})$ for which
    % \begin{equation}\nn
    %     \sup_{\eta}\left|
    %     \frac1N \sum_i \frac{1}{T}\int_0^T \E\left[A_{11}(X_i(t),Y_i(t);\eta)\right]dt
    % - \overline{A}_{11,T}(\eta) \right| \to 0.
    % \end{equation}
    
    \item Assumption \ref{hm:A_iden}(2), which says that the function $x\mapsto\p_{\eta} \log c(y;\eta)$ is not identically zero, is required to identify $\eta$ in the presence of the random-effect parameter $\tau_i$; for example, for a known and non-constant function $(c_0,c_1)$, this includes $c(y;\eta)=\exp\{ c_0(y)+\eta \cdot c_1(y)\}$ but excludes the multiplicative case $c(y;\eta)=\eta c_0(y)$. The arguments are in line with those in \cite{EguMas19}, while there is some additional complexity here due to the presence of the random-effect parameters.

    % \item Any smooth non-random function of $t$ can be a common component of $X_i(t)$. As a simple example, letting $X_i(t)=t$ with $q=1$ and given a finite number of smooth basis functions $f_1(t),\dots,f_{p_\vp}(t)$, we can incorporate the trend model through the component $\vp_i \cdot a(x,y)$ randomly weighted by $\vp_i=\{\vp_{i,m}\}$:
    % \begin{equation}
    %     \vp_i \cdot a(x,y) = \sum_{m=1}^{p_\vp} \vp_{i,m}f_m(t).
    % \end{equation}
    % Examples include the basis of the trigonometric functions $(f_{2k-1}(t),f_{2k}(t))=(\cos(2k\pi t/T),\sin(2k\pi t/T))$ for $k\ge 1$.

    \item It is straightforward to extend the model \eqref{eq:Y_sde} for $Y_i$ to a multivariate one, say in $\mbbr^d$, where $a(y)$ is $\mbbr^d \otimes \mbbr^{p_{\vp}}$-valued and $c(y;\eta)$ is $\mbbr^d \otimes \mbbr^{r}$-valued and where $w_i$ is an $r$-dimensional standard {\wp}.
\end{enumerate}
\end{rem}

We need some assumptions on the distribution $\mcl(\tau_1)$.

\begin{ass}[Diffusion random-effect distribution]
\label{hm:A_rand.eff}
There exists a constant $C_\tau>0$ for which
        \begin{align}
            \max_{k=0,1,2,3}\max_{l=0,1}\sup_{\theta_\tau}\left| \p_{\theta_\tau}^k \p_\tau^l \log f(\tau;\theta_\tau)\right| \lesssim 1+\tau^{C_\tau}+\tau^{-C_\tau},
        \end{align}
        and $f(\tau;\theta_\tau)=f(\tau;\theta_{\tau,0})$ ($\mcl(\tau_1)$-a.e.) if and only if $\theta_\tau=\theta_{\tau,0}$. Moreover, 
        \begin{equation}\label{hm:tau-moments}
        % \forall\theta_\tau\in\overline{\Theta_\tau}~
        \forall K_\tau\in\mbbr,\quad 
        \sup_{\theta_\tau\in\overline{\Theta_\tau}}
        \int_{0+}^\infty y^{K_\tau}f(y;\theta_\tau)dy
        < \infty.
        \end{equation}
\end{ass}

% \begin{rem}
% \label{hm:rem_rand.eff}
% Regarding the estimation of $(\mu,\Sig)$, it is obvious from our proofs that the Gaussianity of $\mcl(\vp_i)$ could be removed upon suitable integrability condition. 
%%%%%%%%%%%%
%%
% \begin{equation}\label{hm:vp-moments}
% \forall(\mu,\Sig)\in\overline{\Theta_{\vp_f} \times \Theta_\Sig}~\forall K>0,\quad 
%         \int_{\mbbr^{p_{\vp}}} |v|^{K} \phi_{p_{\vp}}(v;\mu,\Sig)dv
%         < \infty.
% % \forall K>0,~\E[|\vp_i|^K]<\infty.
% \end{equation}
%%
% We assume the parametrizations $\mu=\mu(\theta_\vp)$ and $\Sig=(\Sig_{kl})_{k,l}=\Sig(\theta_\vp)$ for $\theta_\vp\in\Theta_\vp\subset\mbbr^{p_\vp}$; the full parametrization corresponds to $\theta_\vp=(\mu,\{\Sig_{kl}\}_{k\le l})$.
%%%%%%%%%%%%
% \end{rem}

%%%%%%%%%%%%%%%%%%%%%%%%%%%%%%%%%%%%%
%%%%%%%%%%%%%%%%%%%%%%%%%%%%%%%%%%%%%
\section{Profile-marginal quasi-likelihood inference}
\label{hm:sec_p.m_qmlf}

In the estimation of a single diffusion-type model, it is possible to conduct a stepwise inference through profiling under a sufficiently high-frequency sampling scheme. It first focuses on the diffusion coefficient while ignoring the drift term, often enabling us to conduct an adaptive inference by reducing the computational costs in (numerical) optimizations:
%See, for example, \cite{UchYos13} and the references therein.
among others, we refer to \cite{UchYos12} and \cite{KamUch15} for details.

In the mixed-effects framework, the previous study \cite[Section 3]{FujMas23} developed a simple three-stage conditional least-squares estimation procedure in a class of generalized hyperbolic type location-scale mixed-effects models. However, it will be impossible to directly apply the same strategy as in \cite{FujMas23} to our model \eqref{eq:Y_sde} since each conditional distribution of $Y_i$ given $(\vp_i,\tau_i)$ is dynamic, so that the filtration structure is much more complicated to handle.
We will further modify the stepwise estimation procedure by profiling the random-effect parameters to obtain computationally convenient estimators; 
% % this  strategy goes back to the \cite{LeeNel96} and also \cite{LeeNelPaw06}; 
% it should be noted that in general such a profiling procedure may lose the consistency \cite{NeySco48}.
the strategy is possible due to the specific form of the coefficients of our model setup.
Relatedly, \cite[Section 4]{GenLar16} studied the estimation of the distribution of the random-effect parameter in the drift coefficient based on continuous records. There, plug-in-type parametric estimation was considered for, as a specific example,  the positive random-effect parameter $\vp_i\sim\Gam(a,\lam)$ in $dY_i(t)=-\vp_i Y_i(t)dt + dW_i(t)$ based on continuous records $\{(Y_i(t))_{t\le T})\}_{i\le N}$ for large $T$ and $N$.
% Also, the bivariate random-effects version was investigated in \cite{DioGen16}.
This work is an important theoretical study. However, in our discrete-time sampling situation, we need essentially different technical stuff.

Specifically, we will proceed as follows.
\begin{itemize}
\item First, ignoring the drift term, we focus on estimating the ``diffusion'' parameter
\begin{equation}
\vt_1:=(\eta,\theta_\tau).
\nonumber
\end{equation}
\item Next, we turn to estimating the remaining ``drift'' parameter
\begin{equation}
\vt_2:=(\vp_f,\Sig_r)
\nonumber
\end{equation}
with plugging in the estimated value of $\vt_1$.
\end{itemize}
% We write $\Theta_1$ and $\Theta_2$ for the parameter spaces of $\vt_1$ and $\vt_2$, respectively, so that $\Theta=\Theta_1\times\Theta_2$.
The distribution of the random-element sequence $\{(Y_i,w_i,\tau_i,\vp_i)\}_{i\ge 1}$ induces the family of probability measures $(\pr_\vt)_{\vt\in\Theta}$.
We will write $\pr$ for $\pr_{\vt_0}$; this is an abuse of notation, but it will not cause any inconvenience.

%%%%%
\subsection{Diffusion parameter: profile quasi-likelihood}
\label{hm:sec-diff.param}

% \subsubsection{Estimation method and result}
% \label{hm:sec-diff.param.method}

In this first step, we regard the drift components in \eqref{eq:Y_sde} as nuisance elements in estimating the target parameter $\vt_1$.
Recall that we are using the notation:
\begin{equation}
    \vp_i\cdot a(y)=\vp_f \cdot a_f(y) + \vp_{r,i}\cdot a_r(y).
\end{equation}
By applying the Euler approximation and then ignoring the drift parts, the approximate $\tau$-conditionally Gaussian discrete-time dynamics is formally given by ($\overset{\vt}{=}$ indicates the identity under $\pr_\vt$)
\begin{align}
Y_{ij} &\overset{\vt}{=} Y_{i,j-1}+ \tau_i\,\vp_i \cdot \int_j a(Y_i(s))ds 
 +\sqrt{\tau_i}\int_j c(Y_i(s);\eta) dw_i(s)
\nn\\
&\approx Y_{i,j-1}+ \tau_i\,\vp_i \cdot a_{i,j-1}h +\sqrt{\tau_i}\, c_{i,j-1}(\eta) \D_j w_i
\nn\\
&
%\overset{\vt}{\approx} 
\approx Y_{i,j-1} + \sqrt{\tau_i} \,c_{i,j-1}(\eta) \D_j w_i.
\label{hm:step.diff-1}
\end{align}
We will construct our estimator of $\vt_1$ first by profiling out $\tau_i$ to obtain a random function of $\eta$ only, and then by optimizing the marginal likelihoods of $\mcl(\tau_i)$ for estimating $\theta_\tau$.

Recall the notation $S_{i,j-1}(\eta)=c_{i,j-1}^2(\eta)$.
If we temporarily regard the random-effect parameters $\{\tau_i\}_{i}$ as statistical (non-random) parameters, then it is natural from \eqref{hm:step.diff-1} to consider the following Gaussian quasi-likelihood:
\begin{equation}
(\eta,\tau_1,\dots,\tau_N) \mapsto %\sum_i\sum_j
\sum_{i,j} \log\phi\big(\ny_{ij}; Y_{i,j-1}, h\tau_i \,S_{i,j-1}(\eta)\big).
\nonumber
\end{equation}
Solving the associated quasi-likelihood equations with respect to $\tau_1,\dots,\tau_N$ leads to the following explicit solutions as random function of $\eta$:
\begin{equation}\label{hm:tau-hat}
\taues_i(\eta) :=\frac1n \sum_j S^{-1}_{i,j-1}(\eta) \,\ny_{ij}^2,\qquad i=1,\dots,N.
\end{equation}

If $\{\tau_i\}_{i}$ were observed in addition to $\{\ny_{ij}\}_{i,j}$, then taking the $\tau_i$-conditional approximation \eqref{hm:step.diff-1} into account we may define the quasi-log-likelihood as follows:
\begin{equation}
(\eta,\theta_\tau) \mapsto 
\sum_{i,j} \log\phi\left(Y_{ij}; Y_{i,j-1}, h\tau_i \,S_{i,j-1}(\eta)\right) + \sum_i \log f(\tau_i;\theta_\tau).
\label{hm:clf-1}
\end{equation}
However, since $\{\tau_i\}_{i}$ is unobserved, we substitute the expressions \eqref{hm:tau-hat} into $\tau_i$ in \eqref{hm:clf-1} to obtain the random function of the non-random quantity $\vt_1=(\eta,\theta_\tau)$:
\begin{align}
(\eta,\theta_\tau) \mapsto 
\sum_{i,j} \log\phi\left(Y_{ij}; Y_{i,j-1}, h\taues_i(\eta) \,S_{i,j-1}(\eta)\right) + \sum_i \log f(\taues_i(\eta);\theta_\tau).
\label{hm:clf-1.2}
\end{align}
% We call this random function a profile quasi-likelihood.
Ignoring the irrelevant terms free from $\vt_1$ in \eqref{hm:clf-1.2}, we define the following random function, which we call the \textit{first-stage profile quasi-likelihood}:
\begin{equation}
\mbbh_{1,N,n}(\vt_1) := \mbbh_{11,N,n}(\eta) + \mbbh_{12,N,n}(\vt_1),
\label{hm:def_QLF1}
\end{equation}
where
\begin{align}
\mbbh_{11,N,n}(\eta) &:=
 -\frac12 \sum_i \bigg\{ \sum_{j} \log S_{i,j-1}(\eta) + n\log\bigg(\frac1n \sum_j S_{i,j-1}^{-1}(\eta)\,\ny_{ij}^2\bigg)\bigg\},
\label{hm:def_QLF1-1}\\
\mbbh_{12,N,n}(\vt_1) &:= \sum_i \log f(\taues_i(\eta);\theta_\tau).
\label{hm:def_QLF1-2}
\end{align}
In what follows, we will largely omit the dependence on ``$N,n$'' from the notation, such as $\mbbh_{1}(\vt_1)=\mbbh_{1,N,n}(\vt_1)$.

One could define an estimator of $\vt_1$ by any maximizer of $\mbbh_{1}$.
However, it will be seen that the random function $\mbbh_{1}(\vt_1)$ has the mixed-rates structure: roughly speaking, $\mbbh_{11}(\eta)$ is of order $O_p(nN)$ while $\mbbh_{12}(\vt_1)$ is of order $O_p(N)$.
In the present asymptotics, %as in \cite{delattre2018parametric}, 
it will be shown that the fixed-effect parameter $\eta$ can be estimated more quickly than the parameter indexing $\mcl(\tau_i)$. It is therefore natural to estimate $\vt_1$ by $\wh{\vt}_1=(\wh{\eta},\wh{\theta}_\tau)$ defined by any
\begin{equation}
\wh{\eta} \in \argmax_{\eta} \mbbh_{11}(\eta),\qquad \wh{\theta}_\tau \in \argmax_{\theta_\tau} \mbbh_{12}(\wh{\eta},\theta_\tau).
\label{hm:def_stage-1.estimators}
\end{equation}
The existence of $\wh{\vt}_1$ follows from the uniform continuity $(\mbbh_{11},\mbbh_{12})$ over $\overline{\Theta}_1$ and the measurable selection theorem.
As such, we can execute the optimization of $\theta_\tau\mapsto\mbbh_{12}(\wh{\eta},\theta_\tau)$ as if we observe the random effects $\tau_1,\dots,\tau_N$ as $\wh{\tau}_1(\wh{\eta}),\dots,\wh{\tau}_N(\wh{\eta})$, respectively. 
This estimation method will be computationally much more convenient than the direct optimization of $\vt_1 =(\eta,\theta_\tau)\mapsto \mbbh_{1}(\vt_1)$.

\begin{rem}
\label{hm:rem_diff-1}
~
\begin{enumerate}
    \item The random function $\mbbh_{11}(\eta)$ based on \eqref{hm:tau-hat} is similar to the GQLF for a discretely observed diffusion process with an unknown time scale \cite{EguMas19}, where we considered profiling out the time scale parameter from the conventional Gaussian quasi-likelihood and derived the asymptotic properties of the associated estimators. Indeed, our technical arguments in this paper partly owe to those in \cite{EguMas19}, while our situation is more complicated because of the presence of the random-effect parameters.    
    We also note that the linear dependence of $\tau_i$ in the drift coefficient of \eqref{eq:Y_sde} is never essential: indeed, it does not matter how $\tau_i$ depends on the drift coefficient in the second-stage drift estimation, where we will plug in $\wh{\tau}_i$ as a known quantity.
    
    \item In case where $c(x;\eta)$ does not depend on the unknown parameter $\eta$, then we can directly use
    \begin{align}\nn
    \mbbh_{1}(\theta_\tau) &:= \sum_i \log f(\taues^\ast_i;\theta_\tau)
    \end{align}
    to estimate $\theta_\tau$ based on the statistics $\taues^\ast_i :=n^{-1} \sum_j S^{-1}_{i,j-1} \,\ny_{ij}^2$.
\end{enumerate}
\end{rem}

Let
\begin{equation}\label{hm:def_g}
    g(y;\eta) := \p_{\eta}\log S(y;\eta) = \frac{\p_{\eta}S(y;\eta)}{S(y;\eta)},
\end{equation}
and then let ($g_1(t):=g_1(Y_1(t);\eta_0)$)
\begin{align}
    Q_{11,0} &:= \frac12 \E\left[
    \frac1T \int_0^T g_1(t)^{\otimes 2}dt 
    - \left(\frac1T \int_0^T g_1(t)dt\right)^{\otimes 2}
    \right].
\end{align}
Further, let
\begin{equation}
    \mci_{12} = \mci_{12}(\theta_{\tau,0}) := 
    \E\left[\left\{\p_{\theta_\tau}\log f(\tau_1;\theta_{\tau,0})\right\}^{\otimes 2}\right]
    \label{hm:def_I.12}
\end{equation}
denote the Fisher information matrix of the model $\{f(\cdot;\theta_\tau):\,\theta_\tau\in\Theta_\tau\}$.
The following theorem shows the asymptotic distribution of $\wh{\vt}_1$.

\begin{thm}\label{hm:thm_diff.param}
Under the setup described in Section \ref{hm:sec_setup.assump}, we have
\begin{equation}
\left(
\sqrt{nN}(\wh{\eta} -\eta_{0}),\, \sqrt{N}(\wh{\theta}_\tau -\theta_{\tau,0})
\right)
\cil N_{p_{\eta}+p_{\tau}}\left(0,\,\diag\left(Q_{11,0}^{-1},\, \mci_{12}^{-1}\right)\right).
\nonumber
\end{equation}
\end{thm}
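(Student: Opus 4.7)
The plan is to analyze the two halves of $\wh\vt_1$ in turn, following the stepwise definition \eqref{hm:def_stage-1.estimators}. The starting point is the high-frequency expansion $y_{ij}^2 = \tau_i S_{i,j-1}\xi_{ij} + O_p(\sqrt h)$ with $\xi_{ij}:=(\D_j w_i)^2/h$, obtained from the SDE \eqref{eq:Y_sde}, Assumption \ref{hm:A_regul}, and the moment bound \eqref{hm:y.ij_mb}. This gives, pointwise in $\eta$, $\wh{\tau}_i(\eta) = \tau_i\cdot T^{-1}\!\int_0^T \rho_i(t;\eta)^{-1}dt + o_p(1)$ with $\rho_i(t;\eta):=S_i(t;\eta)/S_i(t)$. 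I would first prove consistency of $\wh\eta$ by showing that $(nN)^{-1}\{\mbbh_{11}(\eta_0)-\mbbh_{11}(\eta)\}$ converges uniformly in $\eta$ to
\begin{equation*}
U(\eta):=\tfrac12 E\!\left[\log\!\left(T^{-1}\!\int_0^T\rho_1(t;\eta)^{-1}dt\right)+T^{-1}\!\int_0^T\log\rho_1(t;\eta)\,dt\right],
\end{equation*}
which is non-negative by the convexity of $-\log$ (Jensen) and vanishes iff $\rho_1(\cdot;\eta)$ is constant in $t$, hence iff $\eta=\eta_0$ by Assumption \ref{hm:A_iden}(1). Uniformity over $\overline{\Theta}_\eta$ follows by combining pointwise $L^p$-bounds (from Assumption \ref{hm:A_regul} and \eqref{hm:y.ij_mb}) with the Sobolev inequality \eqref{hm:sobolev.ineq}.

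\textbf{Asymptotic normality of $\wh\eta$.}
The crucial algebraic step is a delta-method expansion of the profile score. Writing $\bar g_i:=n^{-1}\sumj g_{i,j-1}$ and $\bar\xi_i:=n^{-1}\sumj \xi_{ij}$ with $\bar\xi_i-1=O_p(n^{-1/2})$, Taylor-expanding the ratio $\p_\eta\wh{\tau}_i(\eta_0)/\wh{\tau}_i(\eta_0)$ in $\bar\xi_i-1$ and collecting cancellations yields
\begin{equation*}
\p_\eta\mbbh_{11}(\eta_0) = \tfrac12 \sum_{i,j}(g_{i,j-1}-\bar g_i)(\xi_{ij}-1) + o_p(\sqrt{nN}).
\end{equation*}
This leading sum has a nested martingale structure: since each $\xi_{ij}-1$ is centered and conditionally distributed as $\chi^2_1-1$ given $\mcf_{i,t_{j-1}}$, a martingale CLT (first in $j$, using the quadratic-variation limit $\sumj (g_{i,j-1}-\bar g_i)^2 \to n(T^{-1}\!\int_0^T g_1^{\otimes 2}dt-\bar G_1^{\otimes 2})$ with $\bar G_1:=T^{-1}\!\int_0^T g_1(t)dt$, then i.i.d.\ across $i$) gives $(nN)^{-1/2}\p_\eta\mbbh_{11}(\eta_0)\cil N(0,Q_{11,0})$. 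A parallel computation shows $-(nN)^{-1}\p_\eta^2\mbbh_{11}(\eta_0)\cip Q_{11,0}$, with the $\sumj\p_\eta g_{i,j-1}$ contributions from the direct and profile pieces cancelling. A locally uniform Sobolev bound on $\p_\eta^2\mbbh_{11}$ then closes the M-estimator argument, giving $\sqrt{nN}(\wh\eta-\eta_0)\cil N(0,Q_{11,0}^{-1})$.

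\textbf{Second stage and joint limit.}
For $\wh{\theta}_\tau$ the crucial a priori estimate is
\begin{equation*}
\wh{\tau}_i(\wh\eta)-\tau_i = \tau_i(\bar\xi_i-1)+\p_\eta\wh{\tau}_i(\eta_0)\cdot(\wh\eta-\eta_0)+O_p(n^{-1})=O_p(n^{-1/2}),
\end{equation*}
from which consistency of $\wh{\theta}_\tau$ follows via Assumption \ref{hm:A_rand.eff} applied to the uniform limit $N^{-1}\mbbh_{12}(\wh\eta,\theta_\tau)\to E[\log f(\tau_1;\theta_\tau)]$. Taylor-expanding the plug-in score around $\tau_i$,
\begin{align*}
\frac{1}{\sqrt N}\p_{\theta_\tau}\mbbh_{12}(\wh\eta,\theta_{\tau,0})
&= \frac{1}{\sqrt N}\sumi \p_{\theta_\tau}\log f(\tau_i;\theta_{\tau,0}) \\
&\qquad + \frac{1}{\sqrt N}\sumi \p_\tau\p_{\theta_\tau}\log f(\tau_i;\theta_{\tau,0})\cdot(\wh{\tau}_i(\wh\eta)-\tau_i)+o_p(1),
\end{align*}
the second term is $o_p(1)$: its conditional variance is $O(1/n)$ because $\wh{\tau}_i(\eta_0)-\tau_i$ has conditional variance $O(1/n)$ given $\tau_i$, while the plug-in of $\wh\eta$ contributes $O_p(\sqrt N/\sqrt{nN})=O_p(n^{-1/2})$ thanks to $N/n\to 0$. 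The first term is an ordinary i.i.d.\ CLT sum converging to $N(0,\mci_{12})$; together with the Hessian limit $-N^{-1}\p_{\theta_\tau}^2\mbbh_{12}(\wh\eta,\theta_{\tau,0})\cip\mci_{12}$, this produces $\sqrt N(\wh{\theta}_\tau-\theta_{\tau,0})\cil N(0,\mci_{12}^{-1})$. The block-diagonal joint covariance comes from the Cram\'er--Wold device and the observation that the $\eta$-score is, given $\mcf_{i,0}$, a martingale sum in the Brownian innovations $\xi_{ij}-1$, hence is asymptotically uncorrelated with the $\theta_\tau$-score, which depends only on $\{\tau_i\}$. The hardest step throughout is the delta-method expansion producing the clean form $(g_{i,j-1}-\bar g_i)(\xi_{ij}-1)$ and controlling its remainder uniformly in $\eta$; this extends the single-process analysis of \cite{EguMas19} to the $N$-individual setting, with \eqref{hm:sobolev.ineq} providing the uniformization.
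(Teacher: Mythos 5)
Your overall architecture coincides with the paper's: the same Jensen/identifiability argument for consistency of $\wh{\eta}$ with the Sobolev inequality \eqref{hm:sobolev.ineq} providing uniformity, the same leading term $\tfrac12\sum_{i,j}(g_{i,j-1}-\overline{g}_i)(z_{ij}^2-1)$ for the profile score, the oracle comparison $\wh{\tau}_i\approx\tau_i$ for the $\theta_\tau$-stage, and asymptotic orthogonality of the two blocks. The consistency step and the $\theta_\tau$-stage are essentially correct and match the paper. However, two steps in your normality argument for $\wh{\eta}$ have genuine gaps.

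First, $\sum_j(g_{i,j-1}-\overline{g}_i)(\xi_{ij}-1)$ is \emph{not} a martingale in $j$: the weight $\overline{g}_i=T^{-1}\int_0^T g(Y_i(t))dt$ depends on the whole path and anticipates the increments, so ``a martingale CLT first in $j$'' with quadratic variation $\sum_j(g_{i,j-1}-\overline{g}_i)^2$ does not apply as stated, and the summands are not even exactly centered. The paper splits the sum as $\sum_j g_{i,j-1}(z_{ij}^2-1)-\overline{g}_i\sum_j(z_{ij}^2-1)$, proves \emph{stable} convergence of the bivariate martingale $(M_{11,1},M_{11,0})$ via \cite[Theorem IX.7.28]{JacShi03} so that the product with the $\mcf$-measurable $\overline{g}_i$ converges by continuous mapping, and---crucially---bounds the nonzero mean $\E[\overline{g}_i\sum_j(z_{ij}^2-1)]=O(1)$ per individual via the martingale representation theorem; after centering, the accumulated bias is $O(\sqrt{N/n})$, negligible only because $N/n\to0$. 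Your proposal has no counterpart of this bias control, and the same issue undermines your justification of the block-diagonal limit: because of $\overline{g}_i$, the $\eta$-score is not a conditionally centered martingale sum given $(\tau_i,\vp_i)$, so its asymptotic uncorrelatedness with $\p_{\theta_\tau}\log f(\tau_i;\theta_{\tau,0})$ again needs the stable-convergence and bias arguments. Second, the remainder bookkeeping: starting from $y_{ij}^2=\tau_iS_{i,j-1}\xi_{ij}+O_p(\sqrt h)$ and summing the error over $i,j$ against bounded weights gives only $O_p(nN\sqrt h)=O_p(N\sqrt{nT})$, which is far larger than the required $o_p(\sqrt{nN})$ (the ratio is $\sqrt{TN}\to\infty$). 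To obtain negligibility one must decompose the $O_p(\sqrt h)$ term into conditionally centered cross terms of the form $\mu_{ij}^\star z_{ij}$ (killed by conditioning and the It\^o isometry) plus genuine $O(h)$ biases; this is exactly the paper's $\D_{11}^{r,3}$ analysis, which you flag as ``the hardest step'' but for which you supply no mechanism.
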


The proof is given in Section \ref{hm:sec_proof.diff}.
We can easily construct a consistent estimator of $Q_{11,0}$, see \eqref{hm:hat-Q11}; the case of $\mci_{12}$ is standard and even easier.

% Let us end this section with a few remarks.

\begin{rem}
We here focused on the fully parametric log-likelihood function to make an inference for $\mcl(\tau_1)$. Obviously, instead of \eqref{hm:def_QLF1-2} we could consider any other $M$-estimation framework, say
\begin{equation}
\mbbh'_{12,N,n}(\vt_1) := \sum_i \rho(\taues_i(\eta);\theta_\tau),
\end{equation}
where $\rho:\,(0,\infty)\times \Theta_{\tau} \to \mbbr$ is a suitable objective function and where the parameter $\theta_\tau$ now may or may not completely determine $\mcl(\tau_1)$.
% , as in the typical situation we encounter when considering the generalized method of moments.
\end{rem}

\begin{rem}
If we are only interested in estimating the diffusion coefficients, we could deal with
\begin{align}
Y_i(t) &= Y_i(0) + \int_0^t \mu_i(s)ds +\sqrt{\tau_i}\int_0^t c(Y_i(s);\eta) dw_i(s),
% \qquad i=1,\dots,N,\quad t\in[0,T],
\nn    
\end{align}
where $\mu_i$ are unknown $(\mcf_{i,t})$-adapted processes. This corresponds to a sort of population modeling of the non-ergodic volatility regression \cite{UchYos13}. 
The claims in Theorem \ref{hm:thm_diff.param} remain valid under mild conditions on $\mu_i$.
\end{rem}

%%%%%
%%%%%
\subsection{Drift parameter: marginal quasi-likelihood}
\label{hm:sec-drif.param}
% \subsubsection{Estimation method and result}
% \label{hm:sec-drift.param.method}

Having obtained the first-stage estimate $\widehat{\vt}_1=(\widehat{\eta},\widehat{\theta}_\tau)$, we proceed to estimating the remaining drift parameter $\vt_2=(\vp_f,\Sig_r)$. 
The Euler approximation in this second stage takes the drift term into account:
\begin{equation}
Y_{ij} \overset{\vt}{\approx} Y_{i,j-1} + \tau_i\,\vp_i \cdot a_{i,j-1} h + \sqrt{\tau_i} \,c_{i,j-1}(\eta) \D_j w_i.
\nn
\end{equation}
We are assuming that $\vp_i=(\vp_f,\vp_{r,i}) \sim \text{i.i.d.}~N_{p_{\vp}}(\mu,\Sig)$, which is truly degenerate as soon as we have the fixed-effect parameter $\vp_f$ while $\vp_{r,i}$ is assumed to be non-degenerate: recall \eqref{hm:vp.iid} and \eqref{hm:vp.param_notation}.

Let us abbreviate as $\widehat{S}_{i,j-1}=c^2_{i,j-1}(\wh{\eta})$ and $\wh{\tau}_i = \wh{\tau}_i(\wh{\eta})$. 
% \eqref{hm:def_tau-hat}, 
Formally supposing that $\{\tau_i\}$ and $\eta_0$ are known quantities given by $\{\wh{\tau}_i\}$ and $\wh{\eta}$, respectively, we consider the following auxiliary Gaussian quasi-likelihood of $\{\vp_i\}$:
\begin{align}
    \{\vp_i\} &\mapsto 
    \prod_i \exp\left( \sum_{j} \log \phi\left(Y_{ij}; Y_{i,j-1} + h \wh{\tau}_i\,\vp_i \cdot a_{i,j-1},\, h \wh{\tau}_i \,\wh{S}_{i,j-1}\right)
    \right)
    \nn\\
    &=: \prod_i \Bigg\{
    \Bigg(\prod_j \big(2\pi h \wh{\tau}_i \,\wh{S}_{i,j-1}\big)^{-1/2}\Bigg)
    % \wh{C}_n 
    \nn\\
    &{}\qquad \times 
    \exp\Bigg(- \frac12 \wh{\tau}_i^{-1} \frac1h \sum_{j} 
    \wh{S}_{i,j-1}^{-1}\left(\D_j Y_i - h \wh{\tau}_i a_{i,j-1}\cdot \vp_i\right)^2
    \Bigg) \Bigg\}
    \nn\\
    &=: \prod_i \wh{g}_i(\vp_i),\quad \text{say.}
\end{align}
As in \cite{delattre2018parametric} and \cite{DelGenLar18-2}, we integrate out the variable $\vp_1,\dots,\vp_N$ with respect to the Gaussian distribution $N_{p_{\vp}}(\mu,\Sig)^{\otimes N}$ as if $\Sig=\diag(O,\Sig_r)$ is non-degenerate: for each $i\le N$, some matrix manipulations and the integration of the Gaussian density give
\begin{align}
    \vt_2 &\mapsto 
    \int_{\mbbr^{p_{\vp}}} \wh{g}_i(\vp_i) \phi_{p_{\vp}}(\vp_i;\mu,\Sig) d\vp_i
    \nn\\
    &= \wh{C}_{i} \det\big(\wh{M}_i^{-1} + \Sig\big)^{-1/2}
    \nn\\
    &{}\qquad \times 
    \exp\left( -\frac12 (\wh{M}_i^{-1} \wh{v}_i - \mu)^\top (\wh{M}_i^{-1}+\Sig)^{-1}(\wh{M}_i^{-1} \wh{v}_i - \mu) \right),
    \label{hm:exp.qlm-2}
\end{align}
where $\wh{C}_i$ denotes an $\mcf_{i,T} \vee \sig(\wh{\vt}_1)$-measurable random variable whose expression does not depend on $(\mu,\Sig,\vp_i)$, and where
\begin{align}
    \wh{M}_i &:= \wh{\tau}_i\,h \sum_j \wh{S}_{i,j-1}^{-1} a_{i,j-1}^{\otimes 2},
    \nn\\
    \wh{v}_i &:= \sum_j \wh{S}_{i,j-1}^{-1} a_{i,j-1} \D_j Y_i
\end{align}
are statistics; note that $\wh{M}_i$ is a.s. non-negative definite. 
The right-hand side of \eqref{hm:exp.qlm-2} is valid for the degenerate $\Sig$ as soon as $\wh{M}_i$ is positive definite, which is true with probability tending to $1$ for $n\to\infty$ under Assumption \ref{hm:A_iden}; 
% In practice, this may not cause a serious problem if $j\mapsto a_{i,j-1}$ is sufficiently varying (non-constant) for each $i\le N$.
specifically, one may go through the above calculations to obtain \eqref{hm:exp.qlm-2} first by replacing $\Sig$ by $\Sig+m^{-1} I_{p_\vp}$ for $m>0$ and then applying Scheff\'{e}'s lemma for $m\to\infty$.

Taking the product of \eqref{hm:exp.qlm-2} for $i=1,\dots,N$ and then taking the logarithm, we arrive at the following \textit{second-stage marginal quasi-likelihood} for estimating $\vt_2$:
\begin{align}
    \mbbh_{2}(\vt_2) 
    &:= \sum_i \log \phi_{p_{\vp}}\left(\wh{M}_i^{-1} \wh{v}_i ;\, \mu,\, \wh{M}_i^{-1}+\Sig\right)
    \label{hm:joint.qlm-2} \\
    % &= -\frac{p_{\vp} N}{2}\log(2\pi) - \frac12 \sum_i \log\det\big(\wh{M}_i^{-1} + \Sig\big)
    % \nn\\
    % &{}\qquad 
    % -\frac12 \sum_i (\wh{M}_i^{-1} \wh{v}_i - \mu)^\top (\wh{M}_i^{-1}+\Sig)^{-1}(\wh{M}_i^{-1} \wh{v}_i - \mu)
    % \nn\\
    &= -\frac{p_{\vp} N}{2}\log(2\pi) - \frac12 \sum_i \log\det\big(\wh{M}_i^{-1} + \diag(O,\Sig_r)\big)
    \nn\\
    &{}\qquad 
    -\frac12 \sum_i (\wh{M}_i^{-1}+\diag(O,\Sig_r))^{-1}
    \left[
    \left( \wh{M}_i^{-1} \wh{v}_i - (\vp_f,0)^\top  \right)^{\otimes 2}
    \right].
    \nn
\end{align}
Then, we define an estimator of $\vt_2$ 
% (of $(\vp_f,\vech(\Sig_r))$) 
by any element
\begin{equation}\label{hm:num.opt-2}
    \wh{\vt}_2=(\wh{\vp}_f,\wh{\Sig}_r) \in \argmax \mbbh_{2}.
\end{equation}
We note that $\mbbh_2(\vt_2)$ is formally the same as in the no-truncated version of the random function $\mathbf{W}_n(X_{i,n},\bm{\mu},\bm{\Omega})$ in \cite[Eq.(31)]{DelGenLar18-2}. 

Let
\begin{align}
    M_i=M_{i,T} := \tau_i \int_0^T \frac{a_i^{\otimes 2}(t)}{S_i(t)}dt,
    \qquad 
    v_i=v_{i,T}:=\int_0^T \frac{a_i(t)}{S_i(t)} dY_i(t).
    \label{hm:M&v_def}
\end{align}
Moreover, let
% ($\mu_0=(\vp_{f},0)$ and $\Sig=\diag(O,\Sig_{r})$)
\begin{align}
    \mbby_{2,0}(\vt_2) &:= \E\left[
    \log\left(
    \frac{
    \phi_{p_{\vp}}\left(M_{1}^{-1} v_{1} ;\, (\vp_f,0),\, M_{1}^{-1}+\diag(O,\Sig_{r})\right)
    }{
    \phi_{p_{\vp}}\left(M_{1}^{-1} v_{1} ;\, (\vp_{f,0},0),\, M_{1}^{-1}+\Sig_0\right)
    }
    \right)\right].
\end{align}
Then, we assume the following.

\begin{ass}[Non-degeneracy of drift coefficient and identifiability]
\label{hm:A_drift}~
\begin{enumerate}
    % \item There exist positive constants $\ve_1,\ve_2$ for which
    % \begin{equation}
    % \max_{i\le N} \left(
    % \lam_{\min}(M_i)^{-2-\ve_1} + \lam_{\min}(\wh{M}_i)^{-1-\ve_2}
    % \right)=O_p(1).
    % \end{equation}
    \item $\ds{\max_{i\le N} \left(\lam_{\min}(M_i)^{-1} + \lam_{\min}(\wh{M}_i)^{-1}\right)=O_p(1)}$.
    
    \item There exists a constant $\mathsf{c}_2>0$ such that $\mbby_{2,0}(\vt_2) \le - \mathsf{c}_2 |\vt_2-\vt_{2,0}|^2$ for every $\vt_2$.
\end{enumerate}
\end{ass}

The non-degeneracy condition Assumption \ref{hm:A_drift}(1) is a technical one. It would be possible to simplify it similarly to the proof of \cite[Lemma 5.2]{EguMas19} to handle the inverse of $\wh{\tau}_i$. However, it cannot be done for free, and careful considerations are necessary.

% We keep writing $\Sigma=\diag(0,\Sigma_r)$.
As in \cite{DelGenLar18-2}, we introduce the following notation:
\begin{align}
    B_{i,T}(\Sig_r) &= \big(M_{i,T}^{-1}+\diag(0,\Sigma_r)\big)^{-1}, \nn\\
    A_{i,T}(\vp_f,\Sig_r) &= B_{i,T}(\Sig_r)
    \big(M_{i,T}^{-1} U_{i,T} - (\vp_f,0) \big),
    \nn
\end{align}
where
\begin{equation}
    U_{i,T} := \int_0^T \frac{a_i(t)}{S_i(t)}dY_i(t).
\end{equation}
Finally, let $\mcj_{2,0}\in\mbbr^{q_\vp}$, where $q_\vp:=p_\vp + p_\vp (p_\vp +1)/2$, denote the covariance matrix associated with the random variable
\begin{equation}
    \begin{pmatrix}
        \tau_1^{-1} A_{1,T}(\vp_f,\Sig_r) \\[2mm]
        \ds{\mathrm{vech}\left(\frac12 \tau_1^{-1} A_{1,T}(\vp_f,\Sig_r)^{\otimes 2} - B_{1,T}(\Sig_r) \right)}
    \end{pmatrix},
\end{equation}
where $\mathrm{vech}$ denotes the vector-half operator that stacks the lower triangular half into a single vector of length $q_\vp$.
%% e.g. Harville, Kronecker Products and the Vec and Vech Operators, Sect 16.4.

Then, we have the following result.

\begin{thm}\label{hm:thm_drif.param}
Suppose that the setup described in Section \ref{hm:sec_setup.assump} and Assumption \ref{hm:A_drift} hold, and suppose that $\mcj_{2,0}$ is invertible. Then, we have
\begin{equation}
\sqrt{N}(\wh{\vt}_2 -\vt_{2,0}) \cil N_{q_{\vp}}\left(0,\,\mcj_{2,0}^{-1}\right).
\nonumber
\end{equation}
\end{thm}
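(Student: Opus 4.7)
The plan is a standard M-estimator Taylor expansion: from $\p_{\vt_2}\mbbh_2(\wh{\vt}_2)=0$,
\begin{equation*}
\sqrt{N}(\wh{\vt}_2 - \vt_{2,0}) = -\bigl(\tfrac{1}{N}\p_{\vt_2}^2 \mbbh_2(\tilde{\vt}_2)\bigr)^{-1}\cdot \tfrac{1}{\sqrt N}\p_{\vt_2}\mbbh_2(\vt_{2,0})
\end{equation*}
for some $\tilde{\vt}_2$ between $\wh{\vt}_2$ and $\vt_{2,0}$, provided $\wh{\vt}_2\cip\vt_{2,0}$ and the normalized Hessian is asymptotically non-degenerate. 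I will establish (a) consistency of $\wh{\vt}_2$, (b) an i.i.d.\ CLT for the score, and (c) an i.i.d.\ LLN for the Hessian, with the limits fitting together as $\mcj_{2,0}^{-1}$ through the Gaussian information identity built into the marginal quasi-likelihood \eqref{hm:joint.qlm-2}.

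The common technical device, repeated at each step, is to approximate the summand at individual $i$---which depends on the first-stage plug-ins $\wh{\eta}$, $\wh{\tau}_i=\wh{\tau}_i(\wh{\eta})$ through $(\wh{M}_i,\wh{v}_i)$---by the continuous-time idealization $\log\phi_{p_{\vp}}(M_{i,T}^{-1}U_{i,T};\mu,M_{i,T}^{-1}+\Sig)$ evaluated at the true $(\eta_0,\tau_i)$. Two independent error sources have to be controlled: a \emph{discretization error} $\wh{M}_i|_{\vt_1=\vt_{1,0}}-M_{i,T}$ (and analogously $\wh{v}_i-U_{i,T}$), handled by It\^o--Taylor and Riemann-sum expansions using Assumption \ref{hm:A_regul} and \eqref{hm:y.ij_mb}; and a \emph{first-stage plug-in error}, handled via the rate $\wh{\eta}-\eta_0=O_p((nN)^{-1/2})$ of Theorem \ref{hm:thm_diff.param} combined with Lipschitz bounds on the explicit profile \eqref{hm:tau-hat}. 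Assumption \ref{hm:A_drift}(1) supplies the uniform lower eigenvalue control needed to make the inverse-matrix maps appearing in the score and Hessian Lipschitz with integrable constants. Because $N/n\to 0$ by \eqref{hm:N/n->0}, the aggregated errors shrink to $o_p(\sqrt N)$ at the score level and $o_p(1)$ at the log-likelihood level.

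For consistency (a), after the above substitution each summand of $N^{-1}(\mbbh_2(\vt_2)-\mbbh_2(\vt_{2,0}))$ becomes a function of the i.i.d.\ triple $(\tau_i,\vp_i,Y_i(\cdot))$, so the classical LLN gives pointwise convergence to $\mbby_{2,0}(\vt_2)$. A tightness argument on $\overline{\Theta_2}$ (in the spirit of the Sobolev bound \eqref{hm:sobolev.ineq}) upgrades this to uniform convergence, and Assumption \ref{hm:A_drift}(2) then forces $\wh{\vt}_2\cip \vt_{2,0}$. For the score in (b), direct Gaussian differentiation yields
\begin{equation*}
\p_\mu \mbbh_2(\vt_{2,0})=\sum_i (\wh{M}_i^{-1}+\Sig_0)^{-1}\bigl(\wh{M}_i^{-1}\wh{v}_i-\mu_0\bigr),
\end{equation*}
together with the analogous block formula for $\p_{\Sig_r}\mbbh_2(\vt_{2,0})$; after the replacement above, each contribution becomes a function of $(\tau_i,\vp_i,Y_i(\cdot))$ alone, the terms are i.i.d., and an elementary computation identifies the common covariance matrix with $\mcj_{2,0}$. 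The classical multivariate CLT finishes this step. Step (c) is analogous: the Gaussian-family information identity implies that $-N^{-1}\p^2\mbbh_2(\tilde{\vt}_2)\cip\mcj_{2,0}$ on any consistent sequence $\tilde{\vt}_2$, using continuity of the Hessian in $\vt_2$ together with (a).

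The principal obstacle is the score analysis in (b): the profile statistic $\wh{\tau}_i(\wh{\eta})$ is only individually $\sqrt n$-consistent, so a na\"ive plug-in bound on its contribution to the score is borderline $O_p(\sqrt{N/n})$. Reducing this below $o_p(\sqrt N)$ requires a careful second-order Taylor expansion of the score in $\vt_1$ and of the inverse map $\vt_1\mapsto\wh{M}_i^{-1}$, leveraging both the global $(nN)^{-1/2}$-rate of $\wh{\eta}$ and the moment bound \eqref{hm:tau-moments} on negative powers of $\tau_i$, together with the non-degeneracy Assumption \ref{hm:A_drift}(1). Once this score expansion is executed, the remainder of the argument follows the classical M-estimator pattern.
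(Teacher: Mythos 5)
Your proposal follows essentially the same route as the paper: replace $(\wh{M}_i,\wh{v}_i)$ by their continuous-time idealizations $(M_{i,T},U_{i,T})$ via stochastic expansions controlling both the discretization error and the first-stage plug-in error of $(\wh{\eta},\wh{\tau}_i)$ at rate $O_p(\sqrt{h}\,)$ per individual, show the gap between $\mbbh_2$ and the idealized quasi-likelihood (and its derivatives) is uniformly $o_p(\sqrt{N})$ using $N/n\to 0$ and Assumption \ref{hm:A_drift}(1) for the matrix inverses, and then run the classical i.i.d.\ $M$-estimation argument with Assumption \ref{hm:A_drift}(2) for identifiability. This matches the paper's three-step proof, so the approach is correct and not materially different.
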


\begin{rem}
\label{hm:rem_re-2}
    In the current model setup, the random effect parameters in the drift coefficients take the multiplicative form $\tau_i \vp_i$, where $\tau_i$ and $\vp_i$ are mutually independent. This is not essential. Because of the stepwise nature of our inference procedure, we could flexibly modify this point in several ways; for example, as in \cite{delattre2018parametric} and \cite{DelGenLar18-2}, we could incorporate $\vp_i$ instead of $\tau_i \vp_i$ with the conditional probability structure $\mcl(\vp_i|\tau_i)=N_{p_{\vp}}(\mu,\tau_i \Sig)$. The modifications are in principle straightforward, and we will not delve into the details here.
\end{rem}

\begin{rem}
%% (HM) Remove?
    Because of the i.i.d. nature of $Y_1(\cdot), \dots, Y_N(\cdot)$, it would make sense to estimate $\Sig_r$ through the empirical centering of $\wh{M}_1^{-1} \wh{v}_1, \dots, \wh{M}_N^{-1} \wh{v}_N$: letting $\overline{\wh{M}^{-1} \wh{v}} := N^{-1} \sumi \wh{M}_i^{-1} \wh{v}_i$, we may estimate $\Sig_r$ by maximizing
    \begin{equation}
    \Sig_r \mapsto \sumi
    \log \phi_{p_{\vp}}\left(\wh{M}_i^{-1} \wh{v}_i - \overline{\wh{M}^{-1} \wh{v}} ;\, 0,\, \wh{M}_i^{-1}+\diag(0,\Sig_r)\right).
    \end{equation}
\end{rem}

%%%%%
\subsection{Connections to existing results}

It is worth noting that the convergence rates established in our theoretical analysis are in line with several well-known results in the literature, even though these results were originally obtained under (quasi-)maximum likelihood estimation procedures, which is not exactly the case in our framework. In classical nonlinear mixed-effects models, it is established that, when $N$ and $n$ both tend to infinity fixed effects converge at rate $\sqrt{Nn}$, while parameters governing the distribution of random effects converge at rate $\sqrt{N}$, where $N$ denotes the number of individuals and $n$ the number of observations per individual \cite{nie2007convergence}. In more specialized settings, which correspond to particular cases of our model, it has been shown that, for $T$ fixed and $N, n$ tending to infinity, fixed effects in the diffusion coefficient also converge at rate $\sqrt{Nn}$, while fixed effects in the drift and the parameters of the random effects distributions converge at rate $\sqrt{N}$ \cite{delattre2013maximum, delattre2015estimation, DelGenLar18-2, delattre2018parametric}. These findings are consistent with our results and further supported by the structure of the asymptotic covariance matrix for the estimators of the diffusion parameters, which reveals the same level of precision comparable to that obtained if the individual-level random effects were observed.

% !TEX root = main_MSDE_HF2.tex

\section{Numerical experiments}
\label{sec:simus}

We simulate data at discrete time points within a given time interval $[0,T]$ according to the following mixed-effects SDE:

\underline{\it Model 1:}
% \begin{equation*}
% \begin{split}
% dY_i(t) & = - \frac{\tau_i \varphi_i}{\sqrt{1+Y_i(t)^2}} dt + \sqrt{\tau_i} \exp(\eta t/2) dW_i(t)\\
% \tau_i & \sim \textrm{LogNormal}(\alpha,\sigma), \; \varphi_i \sim \mathcal{N}(\mu,\omega^2), \; i=1,\ldots,N,
% \end{split}
% \end{equation*}

\begin{equation*}
\begin{split}
Y_i(t) & = Y_i(0) - \tau_i \left( \int_0^t \frac{\varphi_f + \varphi_{r,i}}{\sqrt{1+Y_i(s)^2}} ds \right) + \sqrt{\tau_i} \int_0^t \exp(\eta \, t/2) dw_i(s)\\
\tau_i & \sim \textrm{LogNormal}(\alpha, \lambda), \; \varphi_{r,i} \sim_{i.i.d.} N_1(0,\sigma_r^2), 
\end{split}
\end{equation*}

\medskip

\underline{\it Model 2:}
%\begin{equation*}
% \begin{split}
% dY_i(t) & = \tau_i \left(-  \frac{\varphi_{i1} Y_i(t)}{\sqrt{1+Y_i(t)^2}} - \frac{\varphi_{i2}}{\sqrt{1+Y_i(t)^2}}\right) dt + \sqrt{\tau_i} \exp(\eta\textrm{atan}(Y_i(t))) dW_i(t)\\
% \tau_i & \sim \textrm{Weibull}(\alpha,\lambda), \; \varphi_{i1} \sim \mathcal{N}(\mu_1,\omega_1^2), \; \varphi_{i2} := \mu_2, \; i=1,\ldots,N,
% \end{split}
% \end{equation*}

\begin{equation*}
\begin{split}
Y_i(t) & = Y_i(0) - \tau_i \left(\int_0^t \frac{\varphi_{f}^{(1)} Y_i(s) + \varphi_{f}^{(2)}}{\sqrt{1+Y_i(s)^2}} ds + \int_0^t \frac{\varphi_{r,i} Y_i(s)}{\sqrt{1+Y_i(s)^2}}\right) ds \, + \\
& \sqrt{\tau_i} \int_0^t \exp(\eta \,\textrm{atan}(Y_i(s))) dw_i(s)\\
\tau_i & \sim \textrm{Weibull}(\alpha,\lambda), \; \varphi_{r,i} \sim_{i.i.d.} N_1(0,\sigma_r^2), 
\end{split}
\end{equation*}

\medskip

\underline{\it Model 3:}
% \begin{equation*}
% \begin{split}
% dY_i(t) & = \tau_i \left(- \frac{\varphi_{i1} Y_i(t)}{\sqrt{1+Y_i(t)^2}} - \frac{\varphi_{i2}}{\sqrt{1+Y_i(t)^2}}\right)dt + \sqrt{\tau_i} \exp(\eta\textrm{atan}(Y_i(t))) dW_i(t)\\
% \tau_i & \sim \mathrm{Exp}(\lambda), \; (\varphi_{i1}, \varphi_{i2})^{\top} \sim \mathcal{N}((\mu_1,\mu_2)^{\top},\Omega), \; i=1,\ldots,N.
% %\; \Omega = \begin{pmatrix} \omega_1^2 & \omega_{12}\\ \omega_{12}& \omega_2^2\end{pmatrix}.
% \end{split}
% \end{equation*}

\begin{equation*}
\begin{split}
Y_i(t) & = Y_i(0) - \tau_i \left(\int_0^t \frac{\varphi_{f}^{(1)} Y_i(s) + \varphi_{f}^{(2)}}{\sqrt{1+Y_i(s)^2}} ds + \int_0^t \frac{\varphi_{r,i}^{(1)} Y_i(s) + \varphi_{r,i}^{(2)}}{\sqrt{1+Y_i(s)^2}}\right) ds \, \\
&{}\qquad + \sqrt{\tau_i} \int_0^t \exp(\eta \,\textrm{atan}(Y_i(s))) dw_i(s)\\
\tau_i & \sim \textrm{Weibull}(\alpha,\lambda), \; (\varphi_{r,i}^{(1)}, \varphi_{r,i}^{(2)})^{\top} \sim_{i.i.d.} N_2(0,\Sigma_r), \, \Sigma_r = \begin{pmatrix}
\sigma_{r,1}^2 & \sigma_{r,1,2}\\    
\sigma_{r,1,2} & \sigma_{r,2}^2
\end{pmatrix}.\\
\end{split}
\end{equation*}

\medskip

For each model, data sets are simulated according to different scenarios that combine several numbers of trajectories $N$ (in $\{200,500\}$), several values for $T$ (in $\{5,10\}$) and several time steps $h$ (in $\{0.005, 0.001\}$) between consecutive observations leading to different numbers of observations per trajectory $n$. Each observation path is simulated according to its Euler scheme with a very small time step $\delta=0.0001$. In each scenario, $500$ datasets are simulated and the model parameters are estimated using the procedures described above. To illustrate the theoretical results obtained above, the parameters $\theta_{\tau}$ are also estimated from the simulated $\tau_i$'s as if they were directly observed. 
The true parameter values used in this numerical study are as follows.
\begin{itemize}
\item {\it Model 1}: $\eta = 0.5$, $\alpha = -0.7$, $\lambda = 0.7$, $\varphi_f = 2$, $\sigma_r^2 = 1$, 
\item {\it Model 2}: $\eta = 0.5$, $\alpha = 1$, $\lambda = 0.6$, $\varphi_f^{(1)} = 2$, $\varphi_f^{(2)} = 1$, $\sigma_r^2 = 1$,
\item {\it Model 3}: $\eta = 0.5$, $\lambda = 1$, $\varphi_f^{(1)} = 2$, $\varphi_f^{(2)} = 1$, $\sigma_{r,1}^2 = 1$, $\sigma_{r,1}^2 = 0.5$, $\sigma_{r,1,2} = -0.2$.
\end{itemize}

The results are depicted in Tables
~\ref{tab:m1-diff-est}-\ref{tab:m1-drift-est} (Figure \ref{fig:boxplots-m1}) for Model 1, Tables~\ref{tab:m2-diff-est}-\ref{tab:m2-drift-est} (Figure \ref{fig:boxplots-m2}) for Model 2 and Tables~\ref{tab:m3-diff-est}-\ref{tab:m3-drift-est} (Figure \ref{fig:boxplots-m3}) for Model 3. Each table summarizes the empirical means and standard deviations of the estimates obtained from the $500$ simulated data sets in each of the $(N, n, T)$ configurations.
%In each model, the asymptotic variance of the estimates of the $\theta_{\tau}$ parameters is explicit. 

We see from these tables that in each model all parameters are estimated with little or no bias and that their accuracy improves as the sample size increases.

%In addition, according to \textcolor{red}{Theorems..., the precision of estimates $\theta_{\varphi}$ and $\eta$ increases with $N$ and $n$}, while the precision of the estimates for $\theta_{\tau}$ improves as $N$ increases but does not depend on $n$.

In {\it Model 1}, the asymptotic variance of the estimation of the fixed effect of the diffusion coefficient also has an explicit form $2\sqrt{6}/(T\sqrt{Nn})$. The values of the empirical standard deviations of the estimates obtained for $\eta$ correspond well to their expected theoretical values whatever the values of $N$, $n$ and $T$. 

In all 3 models, the empirical standard deviations of the estimates of $\theta_{\tau}$ correspond to those we would obtain if we estimated $\theta_{\tau}$ from the true realizations of $\tau_i$.

Across these 3 models, the good results obtained in this simulation study show that the estimation procedure proposed here is sufficiently generic to handle different distributions for random effects in the diffusion coefficient, fixed and random effects in the drift, and a more or less complex covariance structure between random drift effects. The ability to handle any distribution with positive support for the random effect in the diffusion coefficient is a real advance over the state of the art.

\begin{table}[tbp]
\centering
\caption{Simulations under {\bf Model 1}: empirical mean and empirical standard error (in brackets) of $500$ {\bf diffusion parameters} estimates for each combination of $N$, $T$ and $n$. The true parameter values are $\eta = 0.5$, $\alpha = -0.7$, $\lambda = 0.7$. $\widehat{\eta_y}$, $\widehat{\alpha_y}$ and $\widehat{\sigma_y}$ stand for the estimates computed from the trajectories observations, whereas $\widehat{\alpha_{\tau}}$ and $\widehat{\lambda_{\tau}}$ denote the estimates computed from the simulated $\tau_i$'s.}
\label{tab:m1-diff-est}
\small 
\begin{tabularx}{\textwidth}{cc|CC C|CC}
\toprule
$(N,T)$ & $n$
& $\widehat{\eta_y}$ & $\widehat{\alpha_y}$ & $\widehat{\lambda_y}$
& $\widehat{\alpha_{\tau}}$ & $\widehat{\lambda_{\tau}}$ \\
\midrule
\multirow{2}*{(200,5)} & 5000
& 0.500 (0.001) & -0.695 (0.066) & 0.705 (0.044)
& -0.695 (0.067) & 0.704 (0.043) \\
& 1000
& 0.498 (0.002) & -0.691 (0.070) & 0.708 (0.045)
& -0.695 (0.067) & 0.704 (0.043) \\
\hdashline
\multirow{2}*{(200,10)} & 10000
& 0.500 (0.0003) & -0.694 (0.068) & 0.705 (0.044)
& -0.695 (0.067) & 0.704 (0.043) \\
& 2000
& 0.500 (0.0007) & -0.694 (0.070) & 0.706 (0.044)
& -0.695 (0.067) & 0.704 (0.043) \\
\midrule
\multirow{2}*{(500,5)} & 5000
& 0.500 (0.0006) & -0.697 (0.055) & 0.706 (0.033)
& -0.698 (0.054) & 0.705 (0.033) \\
& 1000
& 0.498 (0.001) & -0.693 (0.053) & 0.708 (0.033)
& -0.698 (0.054) & 0.705 (0.033) \\
\hdashline
\multirow{2}*{(500,10)} & 10000
& 0.500 (0.0002) & -0.698 (0.053) & 0.706 (0.033)
& -0.698 (0.054) & 0.705 (0.033) \\
& 2000
& 0.499 (0.0005) & -0.696 (0.053) & 0.707 (0.033)
& -0.698 (0.054) & 0.705 (0.033) \\
\bottomrule
\end{tabularx}
\end{table}

% \begin{table}[tbp]
% \begin{center}
% \caption{Simulations under {\bf Model 1}: empirical mean and empirical standard error (in brackets) of $500$ {\bf drift} parameters estimates for each combination of $N$, $T$ and $n$. The true parameter values are $\varphi_f = 2$, $\sigma_r^2 = 1$.}
% \label{tab:m1-drift-est}
% \begin{tabular}{rr|rr}
% \toprule
% $(N,T)$ & $n$ & \multicolumn{1}{c}{$\widehat{\varphi_f}$} & \multicolumn{1}{c}{$\widehat{\sigma_r^2}$} \\
% \midrule
% \multirow{2}*{(200,5)} & 5000 & 2.265 (0.120) & 0.690 (0.316) \\
% & 1000 & 2.254 (0.118) & 0.657 (0.306) \\
% %& 500 & 2.237 (0.117) & 0.615 (0.298) \\
% \hdashline
% \multirow{2}*{(200,10)}& 10000 & 2.265 (0.120) & 0.706 (0.310) \\
% & 2000 & 2.259 (0.120) & 0.685 (0.303) \\
% %& 1000 & 2.250 (0.119) & 0.659 (0.299) \\
% \midrule
% \multirow{2}*{(500,5)}& 5000 & 2.270 (0.078) & 0.734 (0.218) \\
% & 1000 & 2.257 (0.077) & 0.699 (0.213) \\
% %& 500 & 2.242 (0.075) & 0.657 (0.206) \\
% \hdashline
% \multirow{2}*{(500,10)}& 10000 & 2.269 (0.077) & 0.746 (0.216) \\
% & 2000 & 2.262 (0.077) & 0.723 (0.212) \\
% %& 1000 & 2.253 (0.076) & 0.697 (0.208) \\
% \bottomrule
% \end{tabular}
% \end{center}
% \end{table}

\begin{table}[tbp]
\centering
\caption{Simulations under {\bf Model 1}: empirical mean and empirical standard error (in brackets) of $500$ {\bf drift} parameters estimates for each combination of $N$, $T$ and $n$. The true parameter values are $\varphi_f = 2$, $\sigma_r^2 = 1$.}
\label{tab:m1-drift-est}

\small

\begin{tabularx}{\textwidth}{cc|CC}
\toprule
$(N,T)$ & $n$
& $\widehat{\varphi_f}$ & $\widehat{\sigma_r^2}$ \\
\midrule
\multirow{2}*{(200,5)} & 5000
& 2.265 (0.120) & 0.690 (0.316) \\
& 1000
& 2.254 (0.118) & 0.657 (0.306) \\
\hdashline
\multirow{2}*{(200,10)} & 10000
& 2.265 (0.120) & 0.706 (0.310) \\
& 2000
& 2.259 (0.120) & 0.685 (0.303) \\
\midrule
\multirow{2}*{(500,5)} & 5000
& 2.270 (0.078) & 0.734 (0.218) \\
& 1000
& 2.257 (0.077) & 0.699 (0.213) \\
\hdashline
\multirow{2}*{(500,10)} & 10000
& 2.269 (0.077) & 0.746 (0.216) \\
& 2000
& 2.262 (0.077) & 0.723 (0.212) \\
\bottomrule
\end{tabularx}
\end{table}

\begin{table}[tbp]
\centering
\caption{Simulations under {\bf Model 2}: empirical mean and empirical standard error (in brackets) of $500$ {\bf diffusion parameters} estimates for each combination of $N$, $T$ and $n$. The true parameter values are $\eta = 0.5$, $\alpha = 1$, $\lambda = 0.6$. $\widehat{\eta_y}$, $\widehat{\lambda_y}$ and $\widehat{\alpha_y}$ stand for the estimates computed from the trajectories observations, whereas $\widehat{\lambda_{\tau}}$ and $\widehat{\alpha_{\tau}}$ denote the estimates computed from the simulated $\tau_i$'s.}
\label{tab:m2-diff-est}

\small

\begin{tabularx}{\textwidth}{cc|CCC|CC}
\toprule
$(N,T)$ & $n$
& $\widehat{\eta_y}$ & $\widehat{\alpha_y}$ & $\widehat{\lambda_y}$
& $\widehat{\alpha_{\tau}}$ & $\widehat{\lambda_{\tau}}$ \\
\midrule
\multirow{2}*{(200,5)} & 5000
& 0.500 (0.002) & 1.011 (0.060) & 0.610 (0.057)
& 1.011 (0.065) & 0.611 (0.058) \\
& 1000
& 0.498 (0.005) & 1.013 (0.060) & 0.608 (0.058)
& 1.011 (0.065) & 0.611 (0.058) \\
\hdashline
\multirow{2}*{(200,10)} & 10000
& 0.500 (0.002) & 1.011 (0.062) & 0.610 (0.059)
& 1.011 (0.065) & 0.611 (0.058) \\
& 2000
& 0.498 (0.004) & 1.013 (0.060) & 0.608 (0.058)
& 1.011 (0.065) & 0.611 (0.058) \\
\midrule
\multirow{2}*{(500,5)} & 5000
& 0.500 (0.002) & 1.005 (0.044) & 0.609 (0.040)
& 1.005 (0.044) & 0.608 (0.038) \\
& 1000
& 0.498 (0.003) & 1.006 (0.043) & 0.605 (0.038)
& 1.005 (0.044) & 0.608 (0.038) \\
\hdashline
\multirow{2}*{(500,10)} & 10000
& 0.500 (0.001) & 1.005 (0.043) & 0.608 (0.037)
& 1.005 (0.044) & 0.608 (0.038) \\
& 2000
& 0.498 (0.002) & 1.008 (0.043) & 0.605 (0.038)
& 1.005 (0.044) & 0.608 (0.038) \\
\bottomrule
\end{tabularx}
\end{table}

% \begin{table}[tbp]
% \begin{center}
% \caption{Simulations under {\bf Model 2}: empirical mean and empirical standard error (in brackets) of $500$ {\bf drift} parameters estimates for each combination of $N$, $T$ and $n$. The true parameter values are $\varphi_f^{(1)} = 2$, $\varphi_f^{(2)} = 1$, $\sigma_r^2 = 1$.}
% \label{tab:m2-drift-est}
% \begin{tabular}{rc|rrr}
% \toprule
% $(N,T)$ & $n$ & \multicolumn{1}{c}{$\widehat{\varphi_f^{(1)}}$} & \multicolumn{1}{c}{$\widehat{\varphi_f^{(2)}}$} & \multicolumn{1}{c}{$\widehat{\sigma_r^2}$} \\
% \midrule
% \multirow{2}*{(200,5)} & 5000 & 2.007 (0.158) & 1.000 (0.068) & 1.000 (0.205)\\
% & 1000 & 2.002 (0.155) & 0.999 (0.068) & 0.996 (0.204) \\
% %& 500 & 2.000 (0.151) & 0.998 (0.067) & 0.988 (0.202) \\
% \hdashline
% \multirow{2}*{(200,10)} & 10000 & 2.003 (0.122) & 1.003 (0.050) & 0.991 (0.166) \\
% & 2000 & 2.002 (0.123) & 1.002 (0.049) & 0.991 (0.167) \\
% %& 1000 & 2.000 (0.124) & 1.002 (0.050) & 0.987 (0.167) \\
% \midrule
% \multirow{2}*{(500,5)} & 5000 & 1.999 (0.120) & 1.002 (0.047) & 0.992  (0.135) \\
% & 1000 & 1.995 (0.119) & 1.001 (0.048) & 0.986 (0.133) \\
% %& 500 & 1.990 (0.121) & 0.999 (0.049) & 0.979 (0.132) \\
% \hdashline
% \multirow{2}*{(500,10)} & 10000 & 1.997 (0.091) & 1.002 (0.034) & 0.995 (0.108) \\
% & 2000 & 1.997 (0.092) & 1.001 (0.034) & 0.993 (0.110) \\
% %& 1000 & 1.995 (0.092) & 1.001 (0.034) & 0.990 (0.107) \\
% \bottomrule
% \end{tabular}
% \end{center}
% \end{table}

\begin{table}[tbp]
\centering
\caption{Simulations under {\bf Model 2}: empirical mean and empirical standard error (in brackets) of $500$ {\bf drift} parameters estimates for each combination of $N$, $T$ and $n$. The true parameter values are $\varphi_f^{(1)} = 2$, $\varphi_f^{(2)} = 1$, $\sigma_r^2 = 1$.}
\label{tab:m2-drift-est}

\small

\begin{tabularx}{\textwidth}{cc|CCC}
\toprule
$(N,T)$ & $n$
& $\widehat{\varphi_f^{(1)}}$
& $\widehat{\varphi_f^{(2)}}$
& $\widehat{\sigma_r^2}$ \\
\midrule
\multirow{2}*{(200,5)} & 5000
& 2.007 (0.158) & 1.000 (0.068) & 1.000 (0.205) \\
& 1000
& 2.002 (0.155) & 0.999 (0.068) & 0.996 (0.204) \\
\hdashline
\multirow{2}*{(200,10)} & 10000
& 2.003 (0.122) & 1.003 (0.050) & 0.991 (0.166) \\
& 2000
& 2.002 (0.123) & 1.002 (0.049) & 0.991 (0.167) \\
\midrule
\multirow{2}*{(500,5)} & 5000
& 1.999 (0.120) & 1.002 (0.047) & 0.992 (0.135) \\
& 1000
& 1.995 (0.119) & 1.001 (0.048) & 0.986 (0.133) \\
\hdashline
\multirow{2}*{(500,10)} & 10000
& 1.997 (0.091) & 1.002 (0.034) & 0.995 (0.108) \\
& 2000
& 1.997 (0.092) & 1.001 (0.034) & 0.993 (0.110) \\
\bottomrule
\end{tabularx}
\end{table}

\begin{table}[tbp]
\centering
\caption{Simulations under {\bf Model 3}: empirical mean and empirical standard error (in brackets) of $500$ {\bf diffusion parameters} estimates for each combination of $N$, $T$ and $n$. The true parameter values are $\eta = 0.5$, $\lambda = 1$. $\widehat{\eta_y}$, $\widehat{\lambda_y}$ stand for the estimates computed from the trajectories observations, whereas $\widehat{\lambda_{\tau}}$ denotes the estimates computed from the simulated $\tau_i$'s.}
\label{tab:m3-diff-est}

\small

\begin{tabularx}{\textwidth}{cc|CCC}
\toprule
$(N,T)$ & $n$
& $\widehat{\eta_y}$ & $\widehat{\lambda_y}$ & $\widehat{\lambda_{\tau}}$ \\
\midrule
\multirow{2}*{(200,5)} & 5000
& 0.500 (0.002) & 1.007 (0.071) & 1.005 (0.071) \\
& 1000
& 0.497 (0.005) & 1.014 (0.071) & 1.005 (0.071) \\
\hdashline
\multirow{2}*{(200,10)} & 10000
& 0.499 (0.001) & 1.007 (0.071) & 1.005 (0.071) \\
& 2000
& 0.497 (0.003) & 1.014 (0.071) & 1.005 (0.071) \\
\midrule
\multirow{2}*{(500,5)} & 5000
& 0.499 (0.001) & 1.004 (0.045) & 1.002 (0.045) \\
& 1000
& 0.497 (0.003) & 1.011 (0.045) & 1.002 (0.045) \\
\hdashline
\multirow{2}*{(500,10)} & 10000
& 0.499 (0.001) & 1.004 (0.045) & 1.002 (0.045) \\
& 2000
& 0.497 (0.002) & 1.011 (0.045) & 1.002 (0.045) \\
\bottomrule
\end{tabularx}
\end{table}

\begin{table}[tbp]
\centering
\caption{Simulations under {\bf Model 3}: empirical mean and empirical standard error (in brackets) of $500$ {\bf drift} parameters estimates for each combination of $N$, $T$ and $n$. The true parameter values are $\varphi_f^{(1)} = 2$, $\varphi_f^{(2)} = 1$, $\sigma_{r,1}^2 = 1$, $\sigma_{r,2}^2 = 0.5$, $\sigma_{r,1,2} = -0.2$.}
\label{tab:m3-drift-est}

\small

\begin{tabularx}{\textwidth}{cc|CCCCC}
\toprule
$(N,T)$ & $n$
& $\widehat{\varphi_f^{(1)}}$
& $\widehat{\varphi_f^{(2)}}$
& $\widehat{\sigma_{r,1}^2}$
& $\widehat{\sigma_{r,1,2}}$
& $\widehat{\sigma_{r,2}^2}$ \\
\midrule
\multirow{2}*{(200,5)} & 5000
& 1.995 (0.134) & 0.997 (0.085) & 0.983 (0.214) & -0.203 (0.113) & 0.494 (0.124) \\
& 1000
& 1.990 (0.135) & 0.996 (0.085) & 0.976 (0.212) & -0.200 (0.112) & 0.492 (0.124) \\
\hdashline
\multirow{2}*{(200,10)} & 10000
& 1.998 (0.107) & 1.002 (0.072) & 0.983 (0.169) & -0.205 (0.086) & 0.496 (0.100) \\
& 2000
& 1.997 (0.108) & 1.002 (0.073) & 0.979 (0.169) & -0.203 (0.086) & 0.497 (0.100) \\
\midrule
\multirow{2}*{(500,5)} & 5000
& 1.996 (0.086) & 1.002 (0.053) & 0.988 (0.135) & -0.199 (0.072) & 0.502 (0.079) \\
& 1000
& 1.990 (0.086) & 1.000 (0.054) & 0.980 (0.134) & -0.197 (0.072) & 0.500 (0.078) \\
\hdashline
\multirow{2}*{(500,10)} & 10000
& 1.998 (0.069) & 1.003 (0.046) & 0.995 (0.105) & -0.198 (0.057) & 0.499 (0.062) \\
& 5000
& 1.996 (0.069) & 1.003 (0.046) & 0.990 (0.105) & -0.196 (0.057) & 0.500 (0.062) \\
\bottomrule
\end{tabularx}
\end{table}

\begin{figure}[tbp]
\caption{Boxplots of the parameter estimates obtained for \textbf{Model 1} under various scenarios involving $T$, $n$, $N$. The dotted black line represents the true parameter value, green (resp. orange) boxplots correspond to $N=200$ (resp. $N=500$).}
\label{fig:boxplots-m1}
\begin{center}
\includegraphics[scale=0.4]{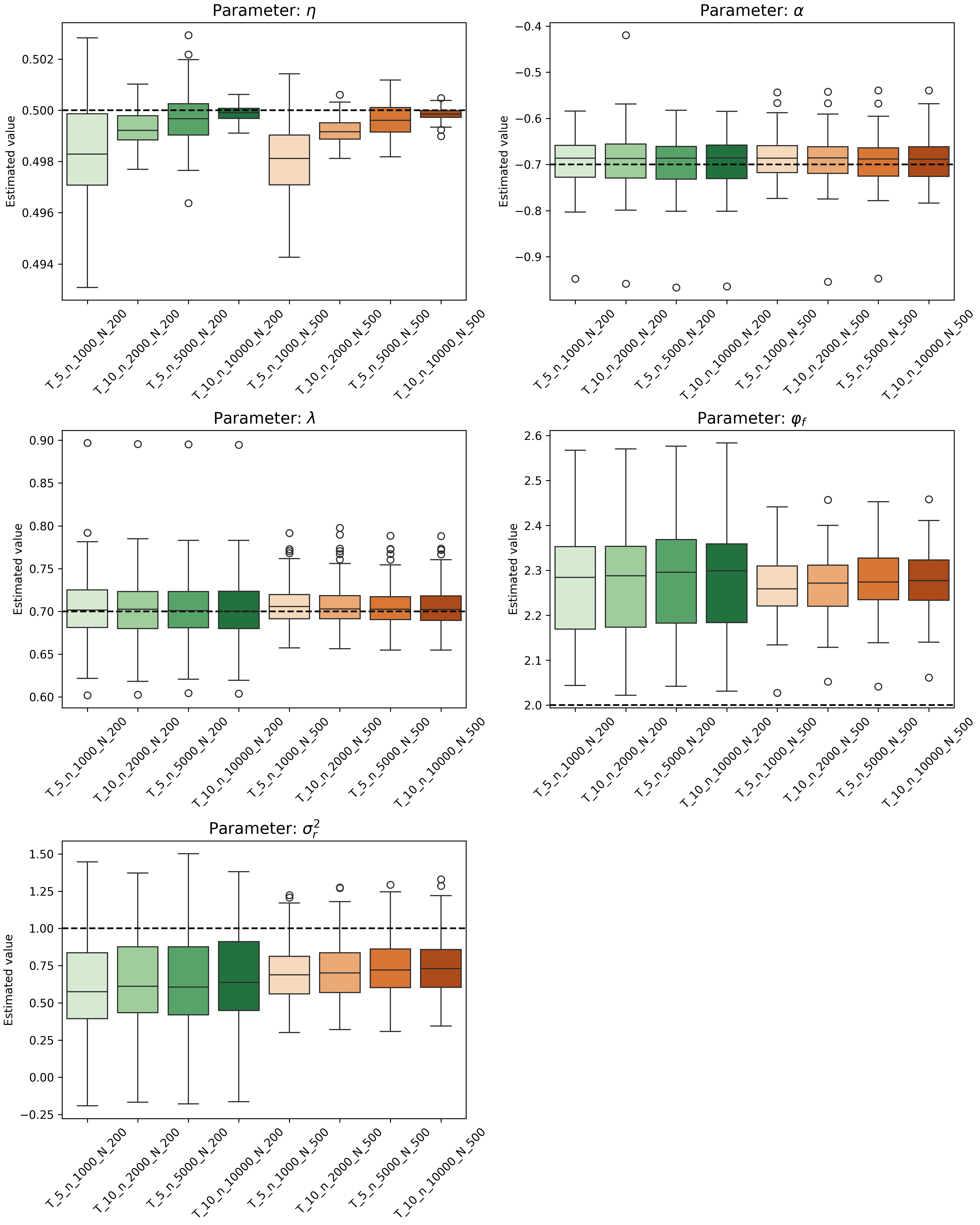}
\end{center}
\end{figure}

\begin{figure}[tbp]
\caption{Boxplots of the parameter estimates obtained for \textbf{Model 2} under various scenarios involving $T$, $n$, $N$. The dotted black line represents the true parameter value, green (resp. orange) boxplots correspond to $N=200$ (resp. $N=500$).}
\label{fig:boxplots-m2}
\begin{center}
\includegraphics[scale=0.4]{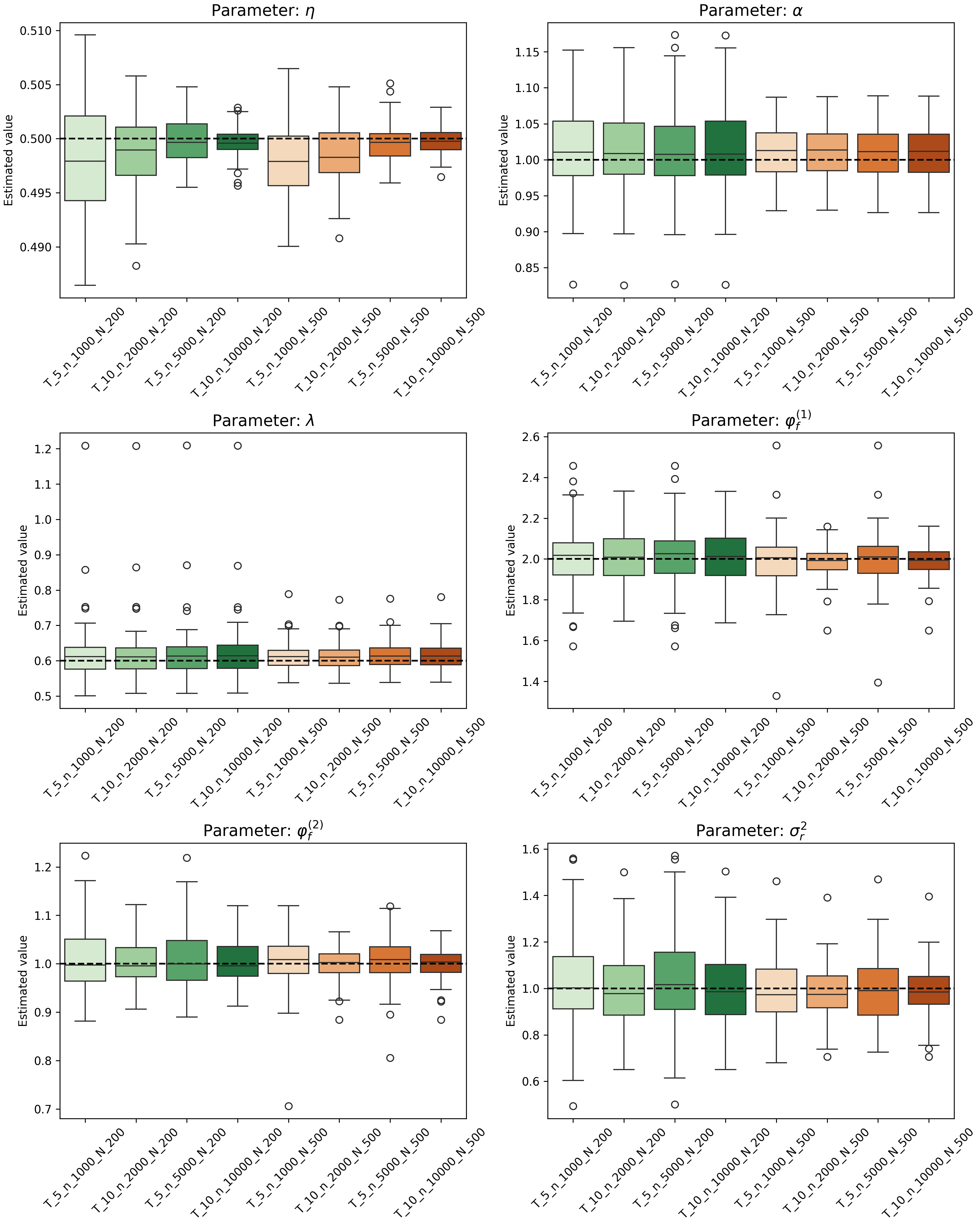}
\end{center}
\end{figure}

\begin{figure}[tbp]
\caption{Boxplots of the parameter estimates obtained for \textbf{Model 3} under various scenarios involving $T$, $n$, $N$. The dotted black line represents the true parameter value, green (resp. orange) boxplots correspond to $N=200$ (resp. $N=500$).}
\label{fig:boxplots-m3}
\begin{center}
\includegraphics[scale=0.4]{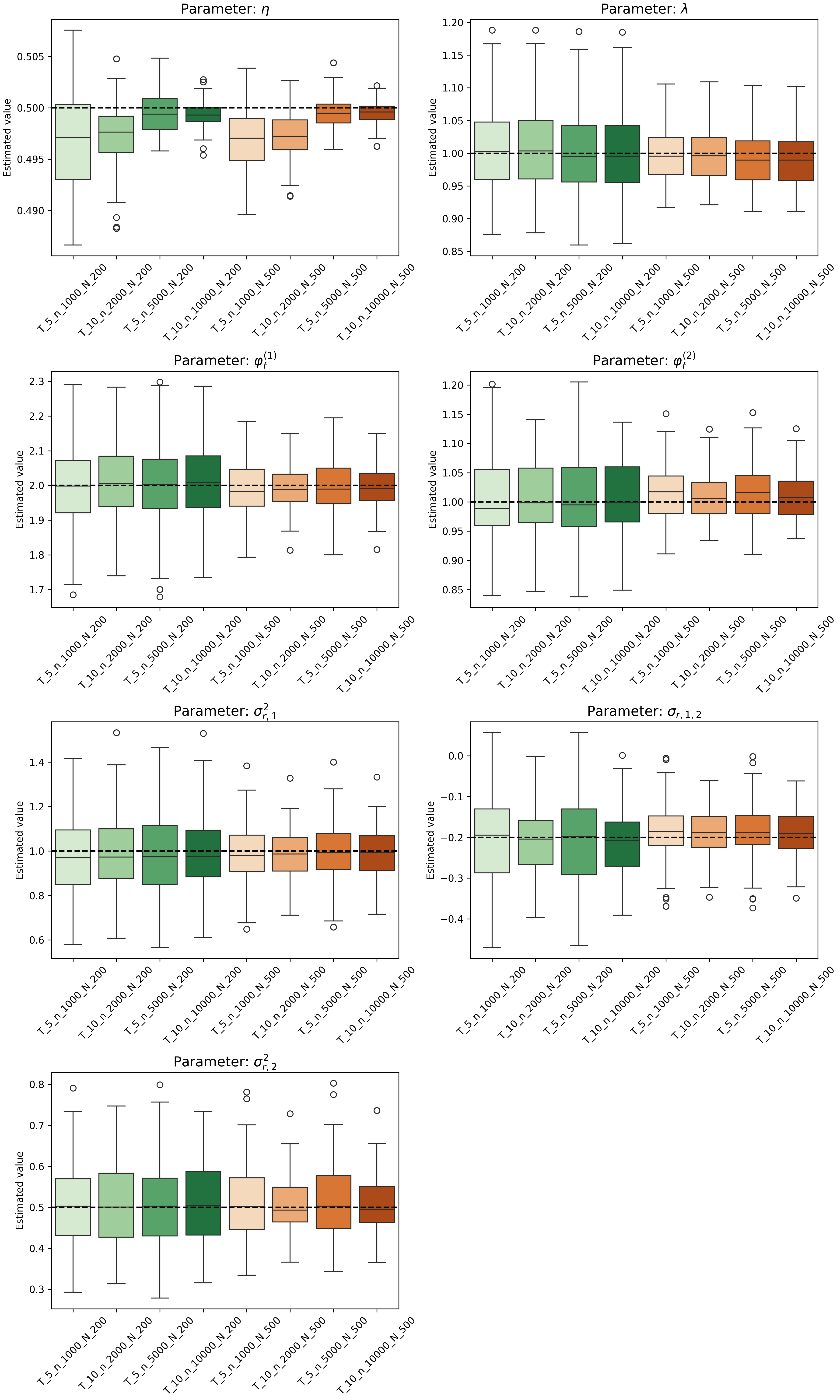}
\end{center}
\end{figure}

% !TEX root = main_MSDE_HF2.tex

%%%%%
%%%%%
\section{Application to real data}
\label{sec:appli}

We chose to illustrate our approach using neuronal data from the R package \texttt{mixedsde}. The data set consists of $N=240$ membrane potential trajectories that contain $n=2000$ points spaced by a time step of $h=0.00015$ s. This data set has been the subject of several published analyses (see \textit{e.g.} \cite{dion2019mixedsde}), in which the authors primarily employed Ornstein-Uhlenbeck processes with Gaussian random effects and fixed effects in the drift, along with a constant diffusion coefficient.

Because the theoretical guarantees of our estimation procedure have so far been proven only for models with bounded coefficients, we began our analysis of the neural data by focusing on two models that meet these assumptions, leaving out the Ornstein–Uhlenbeck model, which does not comply with them:

\begin{itemize}
\item \underline{\it Neural data model 1 (N1):}
\begin{align}
Y_i(t) &= Y_i(0) + \tau_i \int_0^t \left(\frac{\varphi_{i1} Y_i(s)}{\sqrt{1+Y_i(s)^2}} + \frac{\varphi_{i2}}{\sqrt{1+Y_i(s)^2}}\right) ds 
\nn\\
&{}\qquad + \sqrt{\tau_i} \int_0^t \exp(\eta\textrm{atan}(Y_i(s))) dw_i(s)  
\end{align}
\item \underline{\it Neural data model 2 (N2):}
\begin{align}
Y_i(t) &= Y_i(0) + \tau_i \int_0^t \left(\frac{\varphi_{i1} Y_i(s)}{\sqrt{1+Y_i(s)^2}} + \frac{\varphi_{i2}}{\sqrt{1+Y_i(s)^2}}\right) ds 
\nn\\
&{}\qquad + \sqrt{\tau_i} \int_0^t \exp(\eta s/2) dw_i(s)    
\end{align}
\end{itemize}
with, for both (N1) and (N2)
$$
\varphi_i = (\varphi_{i1},\varphi_{i2})^\top \sim_{i.i.d.} N_2(\varphi_f,\Sigma)\; , \; \tau_i \sim \mathcal{L}(\theta_{\tau}) \; , \; i=1,\ldots,N.
$$

Before our analysis, we scaled the signal by multiplying the membrane potential measurements by a factor of $200$. This scaling was performed to make visible an inter-trajectory variability that would otherwise be challenging to discern.

In the first step of our approach, we examined the empirical distribution of the estimated parameters in the diffusion coefficients of each trajectory to select the most appropriate underlying distribution. We attempted to fit several distributions: exponential, (generalized) Weibull, gamma, and log-normal. The QQ plots indicated that a generalized Weibull distribution provided the best fit. Its density is given by
$$
f(\tau;\gamma,\alpha,\lambda) = \gamma \left(\frac{\tau - \lambda}{\alpha}\right)^{\gamma-1}\exp\left[-\left(\frac{\tau - \lambda}{\alpha}\right)^{\gamma}\right].
$$
Numerical instabilities due to a non-invertible matrix $\Sigma$ in the second step motivated us to specify the first parameter $\varphi_{i1}$ as a fixed effect and the second $\varphi_{i2}$ as a random effect with variance $\sigma_r^2$. The strength of our approach lies in its ability to incorporate random effects following different distributions in the diffusion coefficient, while allowing both random and fixed effects in the drift term. The estimated parameter values are as follows:
\begin{itemize}
\item (N1): $\widehat{\eta} = -0.075$, $\widehat{\lambda} = 4.123$, $\widehat{\alpha} = 5.093$, $\widehat{\gamma} = 3.437$, $\widehat{\varphi_f} = (-5.017,10.260)^\top$, $\widehat{\sigma_r^2} = 2.444$.
\item (N2): $\widehat{\eta} = -0.273$, $\widehat{\lambda} = 3.683$, $\widehat{\alpha} = 4.546$, $\widehat{\gamma} = 3.463$, $\widehat{\varphi_f} = (-5.595,11.461 )^\top$, $\widehat{\sigma_r^2} = 3.156$.
\end{itemize}

Once all model parameters had been estimated, we simulated 1000 trajectories under the estimated models to graphically assess the fit quality of models (N1) and (N2). The corresponding results are shown in Figures \ref{fig:neurons-vpc-n1} and \ref{fig:neurons-vpc-n2} for models (N1) and (N2), respectively. In each figure, the simulated trajectories (in gray) are displayed together with the 95\% prediction intervals (in green) and the neuronal data (in black). The results appear visually convincing, despite the observation time frame ($T$) being shorter than in the simulations presented in the previous sections. However, a visual comparison alone does not allow us to reliably identify which of these two models offers a superior fit to the data.

\begin{figure}[tbp]
    \centering
    \begin{subfigure}[t]{0.48\linewidth}
        \includegraphics[width=\linewidth]{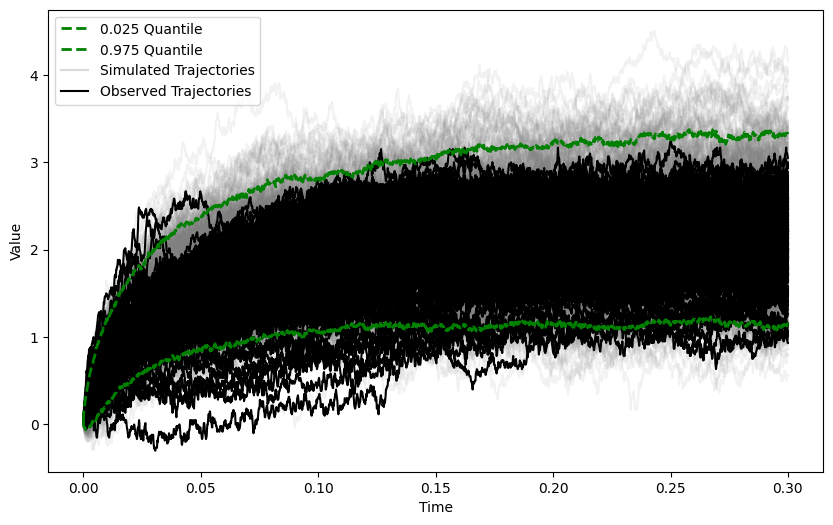}
        \caption{Model (N1).}
        \label{fig:neurons-vpc-n1}
    \end{subfigure}
    \hfill
    \begin{subfigure}[t]{0.48\linewidth}
        \includegraphics[width=\linewidth]{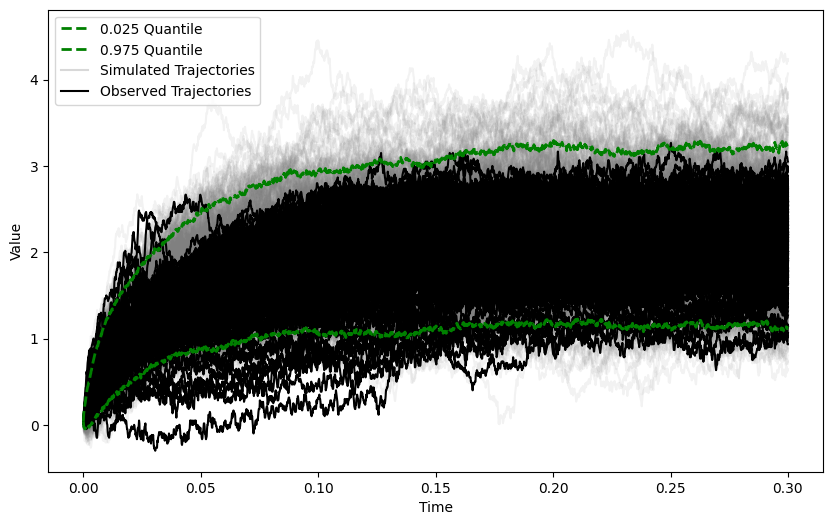}
        \caption{Model (N2).}
        \label{fig:neurons-vpc-n2}
    \end{subfigure}
    \caption{Models (N1) and (N2). Simulated and observed trajectories of membrane potentials. The grey lines represent 1000 simulated trajectories generated from the estimated model, illustrating the expected variability in the system. The green shaded area indicates the 95\% prediction intervals for the simulated trajectories, providing a range within which future observations are expected to fall. The black lines represent the observed neuronal data, highlighting the actual recorded trajectories of membrane potentials. This comparison allows for visual assessment of the model fit and the adequacy of the estimated parameters.}
\end{figure}

Note that we also applied a more general model, consistent with those commonly used in the literature, which includes the Ornstein–Uhlenbeck process as a special case when $\eta = 0$. This model, which incorporates both random and fixed effects in the drift and diffusion coefficients, does not satisfy the assumptions required to establish theoretical properties of the estimators: 
\begin{itemize}
\item \underline{\it Neural data model 3 (N3):}
$$
Y_i(t) = Y_i(0) + \tau_i \int_{0}^{t}(\varphi_{i1} + \varphi_{i2} Y_i(s))ds + \int_{0}^{t} \sqrt{\tau_i(1 + \eta Y_i(s)^2)} dw_i(s)
$$
with 
$$
\varphi_i = (\varphi_{i1},\varphi_{i2})^\top \sim N_2(\varphi_f,\Sigma)\; , \; \tau_i \sim \mathcal{L}(\theta_{\tau}) \; , \; i=1,\ldots,N.
$$
\end{itemize}
Again, a generalized Weibull distribution best fitted the data. The estimated parameter values for model (N3) are $\widehat{\eta} = -0.013$, $\widehat{\lambda} = 3.699$, $\widehat{\alpha} = 4.592$, $\widehat{\gamma} = 3.447$, $\widehat{\varphi_f} = (-4.838,9.770)^\top$, $\widehat{\Sigma} = \begin{pmatrix} 0.039 & 0.028\\0.028 & 2.148\end{pmatrix}$. 

Visual predictive checks are shown in Figure \ref{fig:neurons-vpc-n3}. Although model (N3) lacks formal theoretical guarantees, the results suggest that the estimation procedure performs well and fits the data as well as models (N1) and (N2). A natural next step would be to perform model selection, comparing models (N1), (N2), and (N3); however, this is beyond the scope of the present work.

\begin{figure}[tbp]
    \centering
    \includegraphics[scale=0.5]{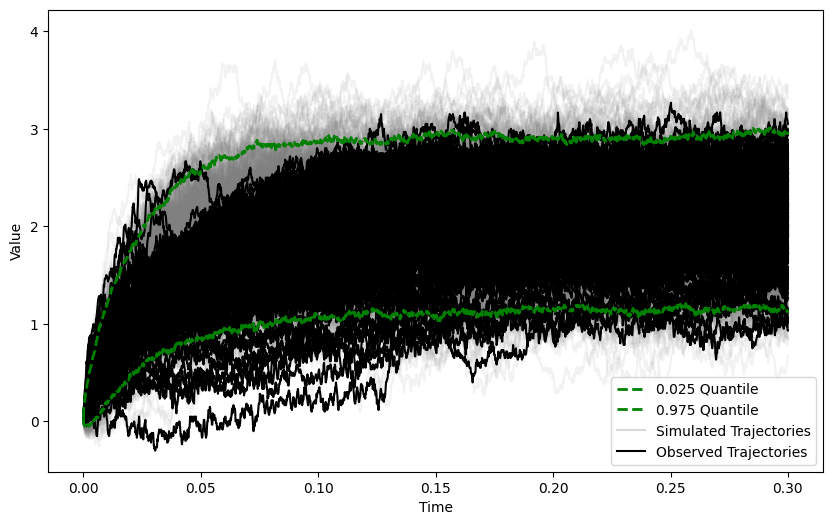}
    \caption{Model (N3). Simulated and observed trajectories of membrane potentials. The grey lines represent 1000 simulated trajectories generated from the estimated model, illustrating the expected variability in the system. The green shaded area indicates the 95\% prediction intervals for the simulated trajectories, providing a range within which future observations are expected to fall. The black lines represent the observed neuronal data, highlighting the actual recorded trajectories of membrane potentials. This comparison allows for visual assessment of the model fit and the adequacy of the estimated parameters.}
    \label{fig:neurons-vpc-n3}
\end{figure}

% !TEX root = main_MSDE_HF2.tex

%%%%%
%%%%%
\section{Discussion}
\label{sec:discussion}

The stochastic hierarchical model we consider, based on a diffusion process with both fixed and random effects in the drift and the diffusion coefficient, is well-suited for analyzing repeated temporal dynamics across individuals. It provides a flexible and interpretable framework to disentangle the intrinsic variability of the process—captured through the stochastic component of the diffusion—and the inter-individual variability, modeled by treating certain parameters as random variables. Such a modeling strategy is particularly relevant for data observed in discrete time but governed by an underlying continuous-time dynamic, which is common in biomedical settings such as pharmacokinetics, neuroscience, or longitudinal physiological monitoring.
One of the strengths of our approach lies in the simplicity of the estimation strategy, which is easy to implement and computationally efficient. The procedure is broadly applicable, as it accommodates a wide range of distributions for the random effects in the diffusion coefficient, as long as they are supported on the positive real line. Furthermore, we establish the consistency and asymptotic normality of the resulting estimators, which ensures sound statistical inference. A particularly appealing feature of the method is its capacity to characterize the observed variability in the data through the estimated parameters, providing insight into both individual-level dynamics and group-level heterogeneity. 
% In addition, the framework allows the inclusion of an auxiliary process as a covariate, which can further enhance the modeling of inter-individual variability and improve explanatory power in applications.
It is worth noting that our framework also allows for extension by incorporating an auxiliary process as a covariate into the coefficients, which can significantly refine the modeling of inter-individual variability and improve explanatory power in applications; we have conducted simulation experiments for such a case in Model 1 in Section \ref{sec:simus} and Model 2 in Section \ref{sec:appli}.

However, the proposed methodology also presents some limitations. First, it relies on high-frequency observations, which may not be available in all practical scenarios. Second, the current model does not explicitly account for measurement noise, an aspect that can be critical in many experimental or clinical data settings. One possible extension to address low-frequency sampling could involve simulating additional intermediate points via Brownian bridges \cite{bladt2016simulation}, though this remains a partial workaround and does not fully compensate for the absence of an explicit noise model. The Python codes for this work are hosted on GitHub (\url{https://github.com/madelattre/Paper----QL-MSDE-HF----Python-codes}).

%To facilitate reproducibility and promote future developments, we provide all the necessary code in an open-access Python package hosted on GitHub} \textcolor{red}{(provide link)}.

%%%%%%%%%%%%%%%%%%%%%%%%%%%%%%%%%%%%%%%%%%%%%%
%% Example with multiple Appendixes:        %%
%%%%%%%%%%%%%%%%%%%%%%%%%%%%%%%%%%%%%%%%%%%%%%
\begin{appendix}
% !TEX root = main_MSDE_HF2.tex

%\appendix
% uncomment for the arxiv version

%%%%%
%%%%%
\section{Proofs}
\label{hm:sec_proofs}

%%%%%
\subsection{Generic notation}
\label{hm:sec_proof.notation}

For convenience, we will use the following notation and convention throughout this section.

\begin{itemize}
\item We write
\begin{equation}
    \sup_i = \sup_{N\ge 1}\max_{i\le N}, \qquad \sup_j =\sup_{n\ge 1}\max_{j\le n}, \qquad \sup_{i,j}=\sup_{n,N\ge 1}\max_{i\le N, \, j\le n},
\end{equation}
where the supremum $\sup_{n,N\ge 1}$ %with respect to $n,N \ge 1$ 
is taken under \eqref{hm:sampling.design}.

    \item For any positive sequence $(r_{N,n})$ and random functions $\xi_{N,n}(\vt)$, we write:
% $\xi_{N,n}(\vt)=O^\ast_{p}(r_{N,n})$ (resp. $\xi_{N,n}(\vt)=o^\ast_{p}(r_{N,n})$) if $\sup_\vt \left|r_n^{-1}\xi_{N,n}(\vt)\right|=O_p(1)$ (resp. $ \sup_\vt \left|r_n^{-1}\xi_{N,n}(\vt)\right| = o_p(1)$).
\begin{equation}
 \xi_{N,n}(\vt)=\left\{
\begin{array}{ll}
    O^\ast_{p}(r_{N,n}) & \text{if}\quad 
\sup_\vt \left|r_{N,n}^{-1}\xi_{N,n}(\vt)\right|=O_p(1); \\[2mm]
    o^\ast_{p}(r_{N,n}) & \text{if}\quad 
 \sup_\vt \left|r_{N,n}^{-1}\xi_{N,n}(\vt)\right| = o_p(1).
\end{array}
 \right.
 \nn
\end{equation}
% \begin{align}
%  & \xi_{N,n}(\vt)=O_{u,M}(r_n) \quad \text{if}\quad 
%  \E\left[\sup_\vt \left|r_n^{-1}\xi_{N,n}(\vt)\right|^K\right]\lesssim 1,
%  \nn\\
%  & \xi_{N,n}(\vt)=o_{u,M}(r_n) \quad \text{if}\quad 
%  \E\left[\sup_\vt \left|r_n^{-1}\xi_{N,n}(\vt)\right|^K\right]\to 0.
%  \nn
% \end{align}
% The notation will also be used for random sequences not depending on $\vt$ and in that case, we will drop the subscript $u$ as $\xi_{N,n}=O_{M}(r_n)$ and $\xi_{N,n}=O_{M}(r_n)$.

    \item We write $\overline{\mfm}_{ij}$ for any random variable (possibly matrix-valued) which is $\mcf_{i,t_j}$-measurable 
% $\sig(\tau_i,\vp_i,X_i(s),w_i(s):\,s\le t_j)$
and satisfies that
\begin{equation}\nn
    \forall K>0\quad \sup_{i,j}\E\big[|\overline{\mfm}_{ij}|^K\big]<\infty.
\end{equation}
Similarly, the notation $\overline{\mfm}_{i}$ is used for any 
$\mcf_{i,T}$-measurable 
% $\sig(\tau_i,\vp_i,X_i(s),w_i(s):\,s\le T)$
random variable such that
\begin{equation}\nn
    \forall K>0\quad \sup_{i}\E\big[|\overline{\mfm}_{i}|^K\big]<\infty.
\end{equation}
Obviously, it is true that $\overline{\mfm}_{i}=n^{-1}\sum_j \overline{\mfm}_{ij}$ and $\overline{\mfm}_{i} = \overline{\mfm}_{ij}$.
Further, we generically write $\overline{\mfb}_{ij}$ for any essentially bounded $\mcf$-measurable random variable possibly depending on the index $(i,j)$:
\begin{equation}
    \max_{i,j}|\overline{\mfb}_{ij}| < \infty, \qquad \text{a.s.}
\end{equation}
The above variables may vary at each appearance, and so are their dimensionalities.

\end{itemize}

%%%%%
\subsection{Proof of Theorem \ref{hm:thm_diff.param}}
\label{hm:sec_proof.diff}

This section is devoted to the proof of Theorem \ref{hm:thm_diff.param}. 
We will handle the fixed-effect and random-effect parameters step by step in this order, which is possible due to the mixed-rates structure of the first-stage profile quasi-likelihood $\mbbh_1(\vt_1)$.

%%%
\subsubsection{Fixed-effect parameter}
\label{hm:sec_diff.fixed.param}

\paragraph{Step 1: Consistency.}~

We begin with proving the consistency of $\wh{\eta}$.
Introduce the random function
\begin{equation}
\mbby_{11}(\eta)=\mbby_{11,N,n}(\eta) := \frac{1}{nN}\left( \mbbh_{11}(\eta) - \mbbh_{11}(\eta_{0}) \right),
\nonumber
\end{equation}
which is maximized at $\wh{\eta}$.
By the standard $M$-estimation theory, %\cite{vdV98}, 
the consistency of $\wh{\eta}$ will follow from the uniform-in-$\eta$ convergence in probability of $\mbby_{11}$ and the suitable identifiability condition.

We will use the following notation:
\begin{align}
    A_{11}(y,\eta) &:= \log\left(\frac{S(y;\eta)}{S(y;\eta_{0})}\right),
    \nn\\%\label{hm:def_A11}\\
    \overline{A}_{11,T}(\eta)
    &:=\E\left[\frac{1}{T}\int_0^T A_{11}(Y_1(t);\eta)dt\right],
    \nn\\%\label{hm:def_A11.bar}\\
    B_{11}(x,\eta) &:= \exp\left(-A_{11}(y;\eta)\right)
    =\frac{S(y;\eta_{0})}{S(y;\eta)},
    \nn\\
    \overline{B}_{11,T}(\eta)
    &:=\E\left[\log\left(\frac{1}{T}\int_0^T B_{11}(Y_1(t);\eta)dt\right)\right].
    \nn
\end{align}
Note that $\overline{A}_{11,T}(\eta_{0})=\overline{B}_{11,T}(\eta_{0})=0$.
Let
\begin{align}
    \mbby_{11,0}(\eta) &:= -\frac12\left( \overline{A}_{11,T}(\eta) + \overline{B}_{11,T}(\eta) \right)
    \nn\\
    &= \frac12
    \E\bigg[
    -\log\bigg(\frac{1}{T}\int_0^T 
    \frac{S_1(t)}{S_1(t;\eta)}dt\bigg)
    +\frac{1}{T}\int_0^T 
    \log\bigg(\frac{S_1(t)}{S_1(t;\eta)}\bigg)dt
    \bigg].
    \nn
\end{align}
Because of the strict convexity of $s\mapsto -\log s$ ($s>0$) and Assumption \ref{hm:A_iden}, Jensen's inequality ensures that the random variables inside the last expectation are a.s. negative for $\eta\ne\eta_{0}$. This implies that $\mbby_{11,0}(\eta)\le 0$ with the equality holding only for $\eta=\eta_{0}$.
Hence, the consistency of $\wh{\eta}$ follows from showing that
\begin{equation}\label{hm:consis-5}
    \mbby_{11}(\eta) - \mbby_{11,0}(\eta) = o_p^\ast(1).
\end{equation}

To deduce \eqref{hm:consis-5}, we will look at the two terms on the right-hand side of \eqref{hm:def_QLF1-1} separately.
To proceed, we prove the following auxiliary lemma.

\begin{lem}
\label{hm:aux.lem1}
Let $\psi(y):\,\mbbr^q\times \mbbr\to\mbbr$ be a $\mcc^1$-class function such that
\begin{equation}
\left(|\psi(y)| + |\p_{y} \psi(y)| \right)\lesssim 1+|y|^C.
\nonumber
\end{equation}
% Suppose that \eqref{hm:sampling.design} holds.
Then, we have for any $K\ge 2$,
\begin{align}
    & \sup_i \sup_n \E\Bigg[\Bigg|\sqrt{n}\Bigg(
    \frac{1}{n}\sum_{j}\psi_{i,j-1} - \frac{1}{T}\int_0^T \psi_i(t)dt
    % \psi(X_i(t),Y_i(t))dt
    \Bigg)\Bigg|^K\Bigg] <\infty,
    \label{hm:aux.lem1-0}\\
    & \sup_{N,n}\E\Bigg[\Bigg|\sqrt{N}\Bigg(
    \frac{1}{nN}\sum_{i,j}\psi_{i,j-1} - \E\Bigg[\frac{1}{T}\int_0^T \psi_1(t)dt\Bigg]
    \Bigg)\Bigg|^K\Bigg] <\infty,
    \label{hm:aux.lem1-1}\\
    & \sup_{N,n}\E\Bigg[\Bigg|\sqrt{N}\Bigg\{
    \frac{1}{N}\sum_{i} \Bigg(\frac1n \sum_j \psi_{i,j-1}\Bigg)^{\otimes 2} 
    \nn\\
    &{}\qquad \qquad 
    -\E\Bigg[\Bigg(\frac{1}{T}\int_0^T \psi_1(t)dt\Bigg)^{\otimes 2}\Bigg]
    \Bigg\}\Bigg|^K\Bigg] <\infty.
    \label{hm:aux.lem1-2'}
\end{align}
\end{lem}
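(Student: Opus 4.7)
The plan is to prove the three bounds in order, with \eqref{hm:aux.lem1-0} serving as a uniform-in-$i$ estimate on which \eqref{hm:aux.lem1-1} and \eqref{hm:aux.lem1-2'} are built by adding an i.i.d.\ averaging argument across individuals.

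For \eqref{hm:aux.lem1-0}, I rewrite the difference as a sum of discretization errors over the subintervals:
\[
\frac{1}{n}\sum_{j}\psi_{i,j-1} - \frac{1}{T}\int_0^T \psi_i(t)dt
= -\frac{1}{T}\sum_{j}\int_j \bigl(\psi_i(t) - \psi_{i,j-1}\bigr)dt.
\]
The $\mcc^1$ mean-value identity
\[
\psi(Y_i(t)) - \psi(Y_i(t_{j-1}))
= \Bigl(\int_0^1 \p_y\psi\bigl(Y_i(t_{j-1}) + u(Y_i(t)-Y_i(t_{j-1}))\bigr)du\Bigr)\cdot \bigl(Y_i(t)-Y_i(t_{j-1})\bigr)
\]
together with the polynomial growth of $\p_y\psi$ and Cauchy--Schwarz yields
\[
\|\psi_i(t) - \psi_{i,j-1}\|_K
\lesssim \bigl\|1+|Y_i(t)|^C+|Y_i(t_{j-1})|^C\bigr\|_{2K}\cdot \|Y_i(t)-Y_i(t_{j-1})\|_{2K} \lesssim h^{1/2},
\]
uniformly in $i,j,t\in(t_{j-1},t_j]$ thanks to \eqref{hm:y.ij_mb}. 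Applying Minkowski's inequality across the $n$ subintervals of length $h=T/n$ gives a bound $\lesssim h^{1/2} = O(n^{-1/2})$, which is exactly the $\sqrt{n}$-rate required.

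For \eqref{hm:aux.lem1-1}, I split the centered statistic as
\[
\frac{1}{nN}\sum_{i,j}\psi_{i,j-1} - E\Bigl[\frac{1}{T}\int_0^T \psi_1(t)dt\Bigr]
= \frac{1}{N}\sum_i \zeta_i^{(1)} + \frac{1}{N}\sum_i \zeta_i^{(2)},
\]
where $\zeta_i^{(1)}$ is the per-individual discretization error and $\zeta_i^{(2)}$ is the i.i.d.\ centered integral $T^{-1}\int_0^T\psi_i(t)dt - E[\,\cdot\,]$. The uniform bound of \eqref{hm:aux.lem1-0} combined with Minkowski controls $\|N^{-1}\sum_i\zeta_i^{(1)}\|_K \lesssim n^{-1/2}$, while Rosenthal's inequality applied to the i.i.d.\ zero-mean sequence $\zeta_i^{(2)}$ (which has all moments finite via \eqref{hm:y.ij_mb}) gives $\|N^{-1}\sum_i\zeta_i^{(2)}\|_K \lesssim N^{-1/2}$. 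Multiplying by $\sqrt{N}$ and invoking the sampling design \eqref{hm:sampling.design} (so that $\sqrt{N/n}\to 0$) produces the uniform moment bound.

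For \eqref{hm:aux.lem1-2'}, the same scheme works after writing, with $Z_i := n^{-1}\sum_j\psi_{i,j-1}$ and $\bar{Z}_i := T^{-1}\int_0^T \psi_i(t)dt$, the tensor identity
\[
Z_i^{\otimes 2} - \bar{Z}_i^{\otimes 2}
= (Z_i - \bar{Z}_i) Z_i^\top + \bar{Z}_i (Z_i - \bar{Z}_i)^\top.
\]
By \eqref{hm:aux.lem1-0}, $\|Z_i-\bar{Z}_i\|_{2K} = O(n^{-1/2})$ uniformly in $i$; by polynomial growth of $\psi$ and \eqref{hm:y.ij_mb}, $\|Z_i\|_{2K}$ and $\|\bar{Z}_i\|_{2K}$ are uniformly bounded. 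Cauchy--Schwarz on each summand then yields the uniform bound $\|Z_i^{\otimes 2} - \bar{Z}_i^{\otimes 2}\|_K = O(n^{-1/2})$, and the residual i.i.d.\ average $N^{-1}\sum_i\bigl(\bar{Z}_i^{\otimes 2} - E[\bar{Z}_1^{\otimes 2}]\bigr)$ is handled by Rosenthal as in \eqref{hm:aux.lem1-1}. The main technical subtlety is the first step: one is tempted to invoke It\^o's formula, which would require $\mcc^2$-regularity of $\psi$, so it is important to stick with the direct Taylor bound and carefully split the product $(1+|Y_i|^C)\cdot(Y_i(t)-Y_i(t_{j-1}))$ via Cauchy--Schwarz using \eqref{hm:y.ij_mb}; this is where uniformity in $i$ is earned.
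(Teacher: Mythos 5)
Your proof is correct and follows essentially the same route as the paper: the same decomposition into a per-path Riemann-sum discretization error (controlled at rate $h^{1/2}$ via the $\mcc^1$ mean-value bound, polynomial growth, and the moment estimates \eqref{hm:y.ij_mb}) plus an i.i.d.\ centered average handled by a Rosenthal/Burkholder-type moment inequality, combined with the sampling design so that $\sqrt{N/n}$ is bounded. The only difference is one of completeness: the paper proves only \eqref{hm:aux.lem1-1} and declares the other two claims similar, whereas you spell out \eqref{hm:aux.lem1-0} and the tensor-product case \eqref{hm:aux.lem1-2'} explicitly, which is a faithful elaboration rather than a new argument.
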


\begin{proof}
We only prove \eqref{hm:aux.lem1-1}, the proofs of \eqref{hm:aux.lem1-0} and \eqref{hm:aux.lem1-2'} are similar.
Recalling that $h=T/n$ and that $Y_1,Y_2,\dots$ are i.i.d., we can estimate as
\begin{align}
    & \left|\frac{1}{nN}\sum_{i,j}\psi_{i,j-1} -\frac{1}{N}\sum_i\frac{1}{T}\int_0^T \E[\psi_1(t)]dt\right|
    \nn\\
    &\le \frac{1}{N} \sum_i \frac1n \sum_j \frac1h \int_{j}\left|\psi_i(t) - \psi_{i,j-1} \right| dt
    \nn\\
    &{}\qquad
    + \left|\frac{1}{N} \sum_i \frac1n \sum_j \frac1h \int_{j}\left(\psi_i(t) - \E[\psi_1(t)]\right) dt\right|.
    \label{hm:aux.lem1-2}
\end{align}
The right-hand side of \eqref{hm:aux.lem1-2} is bounded by $\max\{\sqrt{h},N^{-1/2}\}\lesssim N^{-1/2}$ multiplied by the sum
\begin{align}
& \frac{1}{N} \sum_i \frac1n \sum_j \frac1h \int_{j}
    \int_0^1 (1+|Y_i(t'_{i,j}(s))|^C)ds\,
    \left|\frac{1}{\sqrt{h}}\left(Y_i(t) - Y_i(t_{i,j-1})\right) \right| dt
    \nn\\
    &{}\qquad
    + \left|\frac{1}{\sqrt{N}} \sum_i \frac1n \sum_j \frac1h \int_{j}\left(\psi_i(t) - \E[\psi_1(t)]\right) dt\right|,
    \label{hm:aux.lem1-3}
\end{align}
where $t'_{i,j}(s):=t_{i,j-1}+s(t-t_{i,j-1})$. 
By %\eqref{hm:X.gap_mb}, 
\eqref{hm:y.ij_mb} and the Burkholder inequality, \eqref{hm:aux.lem1-3} is $L^K(\pr)$-bounded for any $K\ge 2$, giving \eqref{hm:aux.lem1-1}.
\end{proof}

We will use the following form of the Sobolev inequality (see, for example, \cite[Theorem 1.4.2]{Ada73}):
given a bounded convex domain $\mathsf{D}\subset\mbbr^{s}$ for some $s\ge 1$ and for any $\mcc^1(\mathsf{D})$-function $\psi$ and a constant $\al > s$, we have
\begin{equation}
\label{hm:sobolev.ineq}
    \sup_{y\in\mathsf{D}} |\psi(y)|^\al \le C \int_{\mathsf{D}} \left(|\psi(y)|^\al +|\p_{y} \psi(y)|^\al\right) dy
\end{equation}
for some universal constant $C=C(\mathsf{D}, \al) >0$.

Recall the shorthand $S_{i,j-1}=S_{i,j-1}(\eta_{0})$ and pick any $K\ge 2$. 
Applying Lemma \ref{hm:aux.lem1}, we readily obtain for each $\eta$ and $k=0,1$,
\begin{align}
& \sup_{\theta_1}\sup_{n,N}\E\Bigg[\Bigg|\sqrt{N}\Bigg(\frac{1}{nN} \sum_{i,j} \p_{\eta}^k\log\left(\frac{S_{i,j-1}(\eta)}{S_{i,j-1}}\right)
\nn\\
&{}\qquad 
- \frac{1}{T}\int_0^T \E\left[\p_{\eta}^k A_{11}(Y_1(t);\eta)\right] dt\Bigg) 
\Bigg|^K\Bigg] = O(1).
\nn
\end{align}
These estimates together with \eqref{hm:sobolev.ineq} yield
\begin{align}
& \sqrt{N}\Bigg(\frac{1}{nN} \sum_{i,j} \log\left(\frac{S_{i,j-1}(\eta)}{S_{i,j-1}}\right)
\nn\\
&{}\qquad\qquad - \frac1N \sum_i \frac{1}{T}\int_0^T \E\left[A_{11}(Y_i(t);\eta)\right]dt\Bigg) = O^\ast_{p}(1).
\nn
\end{align}
% \tcb{
% Proceeding as in the proof of Lemma \ref{hm:aux.lem1}, we obtain 
% for each $\eta$ and $k=0,1$, (recall $S_{i,j-1}=S_{i,j-1}(\eta_{0})$)
% \begin{align}
% & \sqrt{N}\Bigg(\frac{1}{nN} \sum_{i,j} \p_{\eta}^k\log\left(\frac{S_{i,j-1}(\eta)}{S_{i,j-1}}\right)
% \nn\\
% &{}\qquad 
% -\frac1N \sum_i \frac{1}{T}\int_0^T \E\left[\p_{\eta}^k A_{11}(X_i(t),\eta)\right] dt\Bigg) = O_{p}(1).
% \nn
% \end{align}
% These estimates together with the Sobolev inequality \eqref{hm:sobolev.ineq} yield
% \begin{align}
% \sqrt{N}\left(\frac{1}{nN} \sum_{i,j} \log\left(\frac{S_{i,j-1}(\eta)}{S_{i,j-1}}\right)
% - \frac1N \sum_i \frac{1}{T}\int_0^T \E\left[A_{11}(X_i(t),\eta)\right]dt\right) = O^\ast_{p}(1).
% \nn
% \end{align}
% }
Then, by the definition of $\mbby_{11}(\eta)$,
\begin{align}
    & \mbby_{11}(\eta) + \frac12 \overline{A}_{11,T}(\eta)   \nn\\
    &= -\frac{1}{2N} \sum_i \left\{
    \log\left(Q_i(\eta)\right) - \log\left(Q_i(\eta_{0})\right)\right\}+ O^\ast_{p}(N^{-1/2}),
    \label{hm:consis-1}
\end{align}
where
\begin{equation}\label{hm:def_Qi}
    Q_i(\eta) := \frac1n \sum_j S_{i,j-1}^{-1}(\eta)\,\ny_{ij}^2.
\end{equation}

We will identify the uniform-in-$\eta$ limit in probability of the first term on the right-hand side of \eqref{hm:consis-1}. Let us write
\begin{equation}\nn
    \ny_{ij} %=h^{-1/2}\D_j Y_i 
= \sqrt{\tau_i} \, c_{i,j-1}(\eta_{0}) z_{ij} + \sqrt{h} \,\mu_{ij},
\end{equation}
where $z_{ij}:=h^{-1/2}\D_j w_i\sim \text{i.i.d.}~N(0,1)$ (with respect to both $i$ and $j$) and where
\begin{equation}\label{hm:mu.ij_def}
    \mu_{ij} := \mu_{ij}^\star + \sqrt{h}\,\overline{\mu}_{ij}
\end{equation}
with
\begin{align}
    \mu_{ij}^\star &:= \tau_i\,\vp_i\cdot a_{i,j-1}
    \nn\\
    &{}\qquad + \sqrt{\tau_i} \, \frac{1}{\sqrt{h}}\int_j \frac{1}{\sqrt{h}} 
    \left(c_i(s) - c_{i,j-1}(\eta_{0})\right)dw_i(s),
    \label{hm:mu.star.ij_def}\\
    \overline{\mu}_{ij} &:= \frac1h \tau_i \,\vp_i \cdot \int_j \frac{1}{\sqrt{h}}(a_i(s)-a_{i,j-1})ds.
    % \mu_{ij} &:= \vp_i' a_{i,j-1} + \tau_i b_{i,j-1} 
    % \nn\\
    % &{}\qquad + \sqrt{\tau_i} \, \frac{1}{\sqrt{h}}\int_j \frac{1}{\sqrt{h}} 
    % \left(c(X_i(s),Y_i(s);\eta_{0}) - c_{i,j-1}(\eta_{0})\right)dw_i(s)
    % \nn\\
    % &{}\qquad + \bigg(\frac1h \vp_i' \int_j \frac{1}{\sqrt{h}}(a(X_i(s),Y_i(s))-a_{i,j-1})ds 
    % \nn\\
    % &{}\qquad\qquad + \frac{\tau_i}{h} \int_j \frac{1}{\sqrt{h}}(b(X_i(s),Y_i(s);\eta_{2,0}) - b_{i,j-1})ds\bigg)\sqrt{h}.
    \nn
\end{align}
We have $\mu_{ij}^\star=\tau_i\,\vp_i\cdot a_{i,j-1}\overline{\mfm}_{ij}=\overline{\mfm}_{ij}$ and $\overline{\mu}_{ij}=\overline{\mfm}_{ij}$ (hence $\mu_{ij}=\overline{\mfm}_{ij}$) by applying \eqref{hm:tau-moments} and the Burkholder inequality under the present assumptions.
It follows that
\begin{align}\label{hm:yij^2_se}
    \ny_{ij}^2 
    &= \tau_i \,S_{i,j-1} z_{ij}^2+ \sqrt{h}\, \overline{\mfm}_{ij} \nn\\
    &=\tau_i \,S_{i,j-1} + \tau_i \,S_{i,j-1} (z_{ij}^2 -1)
    + \sqrt{h}\, \overline{\mfm}_{ij}.
\end{align}
The finer representation \eqref{hm:mu.ij_def} is not used here, but will matter later in the proof of the asymptotic normality.

By \eqref{hm:yij^2_se},
\begin{align}
    Q_i(\eta)
    = \tau_i F_i(\eta) + \sqrt{h}\left\{\del_{1,i}(\eta)+\del_{2,i}(\eta)+\del_{3,i}(\eta)\right\},
    \label{hm:consis-2}
\end{align}
where
\begin{align}
F_i(\eta) &:= \frac{1}{T} \int_0^T B_{11}(Y_i(t);\eta)dt, \nn\\
\del_{1,i}(\eta) &:= \frac{\tau_i}{\sqrt{h}} \bigg(
    \frac{1}{n} \sum_j B_{11,i,j-1}(\eta)
    % S_{i,j-1}^{-1}(\eta)S_{i,j-1}(\eta_{0}) 
    - \frac1T \int_0^T B_{11}(Y_i(t);\eta)dt \bigg),
\nn\\
\del_{2,i}(\eta) &:= \frac{\tau_i}{n\sqrt{h}} \sum_j B_{11,i,j-1}(\eta) (z_{ij}^2 -1),
\nn\\
\del_{3,i}(\eta) &:= \frac{1}{n} \sum_j S_{i,j-1}^{-1}(\eta) \overline{\mfm}_{ij};
\nn
\end{align}
the last one does not really define $\del_{3,i}(\eta)$ since $\overline{\mfm}_{ij}$ is a generic notation, but that form will be enough.
By Assumption \ref{hm:A_regul}, the random function $F_i(\eta)$ is essentially bounded and also there exists a constant $K>0$ for which
\begin{equation}\label{hm:F_l.b.}
    F_i(\eta) \ge K \qquad\text{a.s.}
\end{equation}
Pick any $M>2\vee p_{\eta}$. By a similar argument to \eqref{hm:aux.lem1-3} in the proof of Lemma \ref{hm:aux.lem1} together with the Sobolev inequality \eqref{hm:sobolev.ineq}, we obtain $\sup_{i,j} \E[\sup_{\eta}|\del_{1,i}(\eta)|^M] < \infty$.
% \begin{equation}
%     \sup_{i,j} \E\left[\sup_{\eta}|\del_{1,i}(\eta)|^M\right] < \infty.
% \end{equation}
Similarly, since $n^{-1}h^{-1/2}=T^{-1/2}n^{-1/2}$, the Sobolev and Burkholder inequalities lead to $\sup_{i,j} \E[\sup_{\eta}|\del_{2,i}(\eta)|^M] < \infty$. The bound $\sup_{i,j} \E[\sup_{\eta}|\del_{3,i}(\eta)|^M] < \infty$ is trivial.
It follows that
\begin{equation}\nn
    \forall M'>0\quad
    \sup_{i,j} \E\left[\sup_{\eta}\left|\del_{i}(\eta)\right|^{M'}\right] < \infty,
\end{equation}
where $\del_{i}(\eta):=\del_{1,i}(\eta)+\del_{2,i}(\eta)+\del_{3,i}(\eta)$.

Recalling the expression \eqref{hm:consis-1}, we now take the logarithm of $Q_i(\eta)$:
by \eqref{hm:consis-2}, we have for each $\eta$,
\begin{align}
& \frac1N \sumi \log\left(Q_i(\eta)\right) - \overline{B}_{11,T}(\eta)
\nn\\
&{}\qquad = \frac1N \sumi \log\tau_i 
+ \frac1N \sumi \left(\log F_i(\eta)- \overline{B}_{11,T}(\eta)\right)
\nn\\
&{}\qquad\qquad  + \frac1N \sumi \log\left(1 + \frac{\sqrt{h}\,\del_i(\eta)}{\tau_i F_i(\eta)}\right).
\label{hm:consis-3}
\end{align}
By the argument based on the Sobolev and Burkholder inequalities as before, the second term in the right-hand side of \eqref{hm:consis-3} equals $O_p^\ast(N^{-1/2})=o_p^\ast(1)$; by definition, the second term vanishes a.s. for $\eta=\eta_{0}$.
Let
\begin{equation}\nn
    D(\eta)=D_N(\eta):=
    \frac{1}{N} \sumi \log\left(1 + \frac{\sqrt{h}\,\del_i(\eta)}{\tau_i F_i(\eta)}\right).
\end{equation}
Then, by \eqref{hm:consis-1} and \eqref{hm:consis-3} we have
\begin{align}\nn
\mbby_{11}(\eta) - \mbby_{11,0}(\eta)
= -\frac{1}{2} D(\eta) + \frac{1}{2} D(\eta_{0}) + O_p^\ast(N^{-1/2}).
\end{align}
Pick any sequence $(\kappa_N)\subset(0,1/2]$ such that $\kappa_N\to 0$ and that $\sqrt{n}\,\kappa_N \gtrsim N^{\mathsf{a}_{11}}$ for some constant $\mathsf{a}_{11}>0$ (possible because of \eqref{hm:sampling.design}). We will show that
\begin{equation}\label{hm:D11_nble}
    \forall\mathsf{b}\in(0,1)\quad D(\eta)= o_p^\ast(\kappa_N^{\mathsf{b}}),
\end{equation}
which concludes the following estimate slightly stronger than \eqref{hm:consis-5}:
% \footnote{
% About the rate of convergence of \eqref{hm:consis-5}: We can take $\kappa_N \sim N^{\mathsf{a}''}n^{-1/2}\lesssim n^{\mathsf{a}_{11}\mathsf{a}''-1/2}$ with $\mathsf{a}_{11}\mathsf{a}''<1/2$. Then, for any positive $\mathsf{c} \le (1+\mathsf{a}'')^{-1}\min\{\mathsf{a}'/2, (1/2-\mathsf{a}_{11}\mathsf{a}')\mathsf{b}\}$, we have 
% % $(n N)^{\mathsf{c}}(\mbby_{11}(\eta) - \mbby_{11,0}(\eta)) =O_p^\ast(1)$. 
% \begin{equation}
%     (n N)^{\mathsf{c}}(\mbby_{11}(\eta) - \mbby_{11,0}(\eta)) =O_p^\ast(1).
% \end{equation}
% This observation will help to study BIC-type statistics; see \cite[Theorem A.1]{EguMas19} and \cite[Section 4]{EguMas24}.
% }
\begin{equation}\label{hm:consis-4}
    \mbby_{11}(\eta) - \mbby_{11,0}(\eta) = O_p^\ast\left(\max\{N^{-1/2},\kappa_N^{\mathsf{b}}\}\right)=o_p^\ast(1).
\end{equation}

To verify \eqref{hm:D11_nble}, we introduce the event
\begin{equation}\nn
    G_N:=\left\{ 
    \max_{i\le N}\sup_{\eta}\left|\frac{\sqrt{h}\,\del_i(\eta)}{\tau_i F_i(\eta)}\right|>\kappa_N
    \right\},
\end{equation}
and fix any $\ep>0$ and $\mathsf{b}\in(0,1)$. Then,
\begin{align}\nn
    \pr\left[\sup_{\eta}|D(\eta)|>\kappa_N^{\mathsf{b}}\,\ep\right]
    \le \pr[G_N] + \pr\left[G_N^c \cap \left\{\sup_{\eta}|D(\eta)|>\kappa_N^{\mathsf{b}}\,\ep\right\}\right].
\end{align}
For any $M>\mathsf{a}_{11}^{-1}$, recalling \eqref{hm:F_l.b.} and applying the Markov and Cauchy-Schwarz inequalities, we have for $M>0$,
\begin{align}
    \pr[G_N]
    &\le \sumi \pr\left[\tau_i^{-1}\sup_{\eta}|\del_i(\eta)| \ge C \frac{\kappa_N}{\sqrt{h}}\right]
    \nn\\
    &\lesssim \sumi 
    \left(\frac{\sqrt{h}}{\kappa_N}\right)^M
    \E\left[\tau_i^{-M}\sup_{\eta}|\del_i(\eta)|^M\right]
    \nn\\
    &\lesssim \frac{N}{(\sqrt{n}\,\kappa_N)^M} \lesssim N^{1-\mathsf{a}_{11}M} \to 0.
    \label{hm:consis-6}
\end{align}
% For any $s\in[0,1]$ we can find a constant $C_s>0$ for which $|\log|1+x||I(|x|\le 1/2)\le C_s |x|^s$.
By the elementary inequality $|\log(1+x)|\le C |x|$ for $|x|\le 1/2$, we have
\begin{align}
    \pr\left[G_N^c \cap \left\{\sup_{\eta}|D(\eta)|>\kappa_N^{\mathsf{b}}\,\ep\right\}\right]
    &\le \pr\left[C \kappa_N^{1-\mathsf{b}} > \ep\right] = 0
    \nn
\end{align}
for every $N$ large enough. We conclude \eqref{hm:D11_nble}, hence the consistency $\wh{\eta} \cip \eta_{0}$.

\medskip

%%%%%%%%%
\paragraph{Step 2: Asymptotic normality.}~

Now we turn to the proof of the asymptotic normality of $\wh{\eta}$.
First, we study the asymptotic behavior of the \textit{quasi-score function}
\begin{equation}\nn
    \D_{11} := \frac{1}{\sqrt{nN}} \p_{\eta}\mbbh_{11}(\eta_{0}) = \frac{1}{\sqrt{nN}} \p_{\eta}\mbbh_{11}.
\end{equation}
Recalling the notation \eqref{hm:def_Qi} and \eqref{hm:def_g}, we can write
\begin{align}\label{hm:s11-1}
\D_{11} = \frac{1}{2\sqrt{N}} \sumi \left(\tau_i^{-1}Q_i\right)^{-1}\frac{1}{\sqrt{n}}\sum_j \left(\frac{\ny_{ij}^2}{\tau_i S_{i,j-1}} - \tau_i^{-1} Q_i\right)\, g_{i,j-1}.
\end{align}
We need to have a closer look at the right-hand side to extract the leading term for which we can apply the central limit theorem.

\medskip

We have $F_i(=F_i(\eta_{0}))=1$, $\del_{1,i}=0$, and $\tau_i^{-1}\del_{2,i}=(nT)^{-1/2}\sum_j (z_{ij}^2 -1)$ in the expression \eqref{hm:consis-2}.
Hence
\begin{align}
    \tau_i^{-1}Q_i
    &= 1+\sqrt{h} \,\frac{\del_i}{\tau_i}
    = 1 + \sqrt{h} \, \left(\frac{1}{\sqrt{T n}}\sum_j (z_{ij}^2 -1) + \tau_i^{-1}\, \del_{3,i} \right)
    \label{hm:tauinv.Q}
\end{align}
and obviously $\tau_i^{-1}Q_i=1+ \sqrt{h}\, \overline{\mfm}_{i}$.
By \eqref{hm:yij^2_se},
\begin{align}\label{hm:s11-3}
    \frac{\ny_{ij}^2}{\tau_i S_{i,j-1}} - \tau_i^{-1} Q_i 
    &= \zeta_{ij} + \sqrt{h}\,\overline{\mfm}_{ij},
\end{align}
where
\begin{equation}\label{hm:zeta_def}
    \zeta_{ij} := (z_{ij}^2 -1) - \frac1n \sum_{k=1}^{n} (z_{ik}^2 -1)
    =(z_{ij}^2-1) + \frac{1}{\sqrt{n}}\overline{\mfm}_i.
\end{equation}
Substituting \eqref{hm:tauinv.Q} and \eqref{hm:s11-3} into \eqref{hm:s11-1} and rearranging the resulting terms, we have
\begin{align}
    \D_{11} &= \frac{1}{2\sqrt{N}} \sumi \left(1+\sqrt{h} \,\frac{\del_i}{\tau_i}\right)^{-1}\frac{1}{\sqrt{n}}\sum_j \left(\zeta_{ij} + \sqrt{h}\,\overline{\mfm}_{ij}\right)\,g_{i,j-1}
    \nn\\
    &= \D_{11}^\star + \D_{11}^{r,1}+\D_{11}^{r,2}+\D_{11}^{r,3},
\end{align}
where
\begin{align}
    \D_{11}^\star &:= \frac{1}{2\sqrt{N}} \sumi \frac{1}{\sqrt{n}}\sum_j g_{i,j-1}\zeta_{ij},
    \nn\\
    \D_{11}^{r,1} &:= -\frac{1}{2\sqrt{N}} \sumi \sqrt{h} \,\frac{\del_i}{\tau_i} \frac{1}{\sqrt{n}}\sum_j \left(\zeta_{ij} + \sqrt{h}\,\overline{\mfm}_{ij}\right)\,g_{i,j-1},
    \nn\\
    \D_{11}^{r,2} &:= \frac{1}{2\sqrt{N}} \sumi \left\{\left(1+\sqrt{h} \,\frac{\del_i}{\tau_i}\right)^{-1}\!\!-\left(1-\sqrt{h} \,\frac{\del_i}{\tau_i}\right)\right\}
    \label{hm:Dr2_def}\\
    &{}\qquad \times
    \frac{1}{\sqrt{n}}\sum_j \left(\zeta_{ij} + \sqrt{h}\,\overline{\mfm}_{ij}\right)\,g_{i,j-1},
    \nn\\
    \D_{11}^{r,3} &:= \frac{1}{2\sqrt{N}} \sumi \frac{1}{\sqrt{n}}\sum_j \sqrt{h}\,\overline{\mfm}_{ij} \,g_{i,j-1}.
\end{align}
We will separately show that $\D_{11}^{r,k}=o_p(1)$ for $k=1,2,3$.

By \eqref{hm:zeta_def}, we have
\begin{align}
    \frac{1}{\sqrt{n}}\sum_j g_{i,j-1}\zeta_{ij} 
    = \frac{1}{\sqrt{n}}\sum_j g_{i,j-1}(z_{ij}^2-1) 
    + \frac{1}{n}\sum_j g_{i,j-1} \overline{\mfm}_i = \overline{\mfm}_i.
\end{align}
Therefore, it is easily seen that
\begin{align}
    |\D_{11}^{r,1}| &\lesssim 
    \sqrt{\frac{N}{n}}\, \frac1N \sumi |\overline{\mfm}_i|
    \left(
    \frac{1}{\sqrt{n}}\sum_j g_{i,j-1}\zeta_{ij} + 
    \frac{1}{n}\sum_j g_{i,j-1}\overline{\mfm}_{ij}
    \right)
    \nn\\
    &\lesssim \sqrt{\frac{N}{n}}\, \frac1N \sumi |\overline{\mfm}_i|
    =O_p\left(\sqrt{\frac{N}{n}}\right) = o_p(1),
\end{align}
concluding that $\D_{11}^{r,1}=o_p(1)$.

To deal with $\D_{11}^{r,2}$, we note that 
% $\max_{i\le N}|\sqrt{h}\,\overline{\mfm}_{i}|\cip 0$.
\begin{equation}\label{hm:max.hc.mi}
    \forall\mathsf{c}>0,\quad \max_{i\le N}\big|h^{\mathsf{c}}\,\overline{\mfm}_{i} \big|\cip 0.
\end{equation}
Indeed, in a similar manner to \eqref{hm:consis-6}, we have for $\ep>0$,
\begin{equation}
    \pr\left[ \max_{i\le N}| h^{\mathsf{c}}\,\overline{\mfm}_{i}| > \ep \right]
    \le \sumi \pr\left[|\overline{\mfm}_i|>\frac{\ep}{h^{\mathsf{c}}}\right]
    \lesssim Nh^{K \mathsf{c}} \lesssim n^{\mathsf{a}''-K \mathsf{c}}\to 0
\end{equation}
by taking $K>0$ sufficiently large. 
In particular, \eqref{hm:max.hc.mi} ensures that %$\max_{i\le N}|\sqrt{h} \,\tau_i^{-1}\del_i|=o_p(1)$.
\begin{equation}
    \max_{i\le N}|\sqrt{h} \,\tau_i^{-1}\del_i|=o_p(1).
\end{equation}
Further, we note the elementary fact
\begin{equation}
\sup_{|x|\le 1/2} x^{-2}\left| (1+x)^{-1} - (1 - x) \right| <\infty.
\end{equation}
Based on these observations, for any $\ep>0$,
\begin{align}
    \pr[|\D_{11}^{r,2}|>\ep]
    &\le \pr\left[ \max_{i\le N}\left|\sqrt{h} \,\frac{\del_i}{\tau_i}\right|> \frac12
    \right] + \pr\left[ |\D_{11}^{r,2}|>\ep,~\max_{i\le N}\left|\sqrt{h} \,\frac{\del_i}{\tau_i}\right|\le \frac12\right]
    \nn\\
    &\le o(1) + \pr\left[ \frac1N \sumi |\overline{\mfm}_i| \ge C\ep \frac{n}{\sqrt{N}}\right]
    \lesssim o(1) + \frac{\sqrt{N}}{n} = o(1).
\end{align}
Hence $\D_{11}^{r,2}=o_p(1)$.

Turning to $\D_{11}^{r,3}$, we need to look at its leading terms in a more detailed manner.
We trace back the manipulations \eqref{hm:mu.ij_def}, \eqref{hm:yij^2_se}, and \eqref{hm:s11-3}. By straightforward yet a bit messy computations, it can be seen that 
% the term $\overline{\mfm}_{ij}$ in \eqref{hm:s11-3} equals
\begin{align}
 \text{(The term $\overline{\mfm}_{ij}$ in \eqref{hm:s11-3})}
 &= \frac{2 \mu_{ij}z_{ij}}{\tau_i c_{i,j-1}}
 + \frac{2}{\sqrt{\tau_i}}\frac1n \sum_j \frac{\mu_{ij}^\star z_{ij}}{c_{i,j-1}}
 + \sqrt{h}\,\overline{\mfm}_{ij}.
 % \nn\\
 % &=: 2 \tau_i^{-1} c_{i,j-1}^{-1} \mu_{ij}z_{ij} 
 % + \mathsf{q}_i + \sqrt{h}\,\overline{\mfm}_{ij}.
\end{align}
Substituting this expression into \eqref{hm:s11-3}, we can estimate $\D_{11}^{r,3}$ as follows:
\begin{align}
    |\D_{11}^{r,3}| &\lesssim 
    \frac1N \sumi \frac1n \sum_j |\overline{\mfm}_{ij}| \sqrt{\frac{N}{n}}
    + \frac1N \sumi \tau_i^{-1} \left|\frac1n \sum_j \frac{g_{i,j-1}}{c_{i,j-1}} \mu_{ij}z_{ij}\right| \sqrt{N}
    \nn\\
    &{}\qquad + \frac1N \sumi \tau_i^{-1/2} \left|\frac1n \sum_{k=1}^{n} g_{i,k-1}
    \frac1n \sum_j c_{i,j-1}^{-1}\mu_{ij}^\star z_{ij}\right| \sqrt{N}
    \nn\\
    &\lesssim O_p\left(\sqrt{\frac{N}{n}}\right)
    + \frac1N \sumi \tau_i^{-1} \left|\frac1n \sum_j \frac{g_{i,j-1}}{c_{i,j-1}} \mu_{ij}z_{ij}\right| \sqrt{N} \nn\\
    &{}\qquad + \frac1N \sumi \tau_i^{-1/2} |\overline{\mfm}_i| \left|
    \frac1n \sum_j c_{i,j-1}^{-1}\mu_{ij}^\star z_{ij}\right| \sqrt{N}
    \nn\\
    &=: O_p\left(\sqrt{\frac{N}{n}}\right) + \overline{\D}_{11}^{r,3,1} + \overline{\D}_{11}^{r,3,2}.
\end{align}
To conclude $\D_{11}^{r,3}=o_p(1)$, we will show that $\overline{\D}_{11}^{r,3,1}$ and $\overline{\D}_{11}^{r,3,2}$ are both $o_p(1)$.

For convenience, we will use the generic notation $\pi_{i,j-1}$ for any $\mcf_{i,t_{j-1}}$-measurable random variables such that
\begin{equation}
    |\pi_{i,j-1}| \le C (1+|Y_{i,j-1}|^C).
\end{equation}
Further, we will write $E^{i,j-1}[\cdot]$ for the conditional expectation $\E[\cdot|\mcf_{i,t_{j-1}}]$.

For $\overline{\D}_{11}^{r,3,1}$, by compensating the summands $g_{i,j-1} c_{i,j-1}^{-1} \mu_{ij}z_{ij}$ and applying the Burkholder inequality, we have
\begin{align}\label{hm:Dr.3-1}
    \overline{\D}_{11}^{r,3,1} &\lesssim O_p\left(\sqrt{\frac{N}{n}}\right) 
    + \frac1N \sumi \tau_i^{-1} \left|\frac1n \sum_j \pi_{i,j-1}\E^{i,j-1}[\mu_{ij}z_{ij}]\right| \sqrt{N}
\end{align}
Now we recall the expression \eqref{hm:mu.ij_def} of $\mu_{ij}$.
By the basic properties of the quadratic characteristic:
\begin{align}
    & \E^{i,j-1}\left[\int_{j}\al_i(s)ds\, \int_{j}\beta_i(s)dw_i(s)\right] 
    = 0,
    \nn\\
    & \E^{i,j-1}\left[\int_{j}\al_i(s)dw_i(s)\, \int_{j}\beta_i(s)dw_i(s)\right] 
    = \E^{i,j-1}\left[\int_{j}\al_i(s)\beta_i(s)ds\right]
\end{align}
for sufficiently integrable $(\mcf_{i,t})_{t\le T}$-adapted process $(\al_i,\beta_i)$ and also applying It\^{o}'s formula, we can deduce that the second term in the right-hand side of \eqref{hm:Dr.3-1} equals
\begin{align}
    & \frac1N \sumi \tau_i^{-1/2} \left|\frac1n \sum_j \pi_{i,j-1}
    \frac1h \int_{j}\frac{1}{\sqrt{h}}
    \E^{i,j-1}[c(Y_i(s);\eta_{0}) - c_{i,j-1}]ds\right| \sqrt{N}
    \nn\\
    &\lesssim \frac1N \sumi \tau_i^{-1/2} 
    \left|\frac1n \sum_j \pi_{i,j-1}
    \frac1h \int_{j}\frac{1}{\sqrt{h}} \,h\, \pi_{i,j-1} %\overline{\mfm}_{ij}
    ds\right| \sqrt{N} \nn\\
    &\lesssim \frac1N \sumi |\overline{\mfm}_i|\,\sqrt{\frac{N}{n}} = O_p\left(\sqrt{\frac{N}{n}}\right).
\end{align}
We conclude that $\overline{\D}_{11}^{r,3,1}=O_p(\sqrt{N/n})=o_p(1)$.

The term $\overline{\D}_{11}^{r,3,2}$ can be handled similarly: through the compensation of $c_{i,j-1}^{-1}\mu_{ij}^\star z_{ij}$ as before, we obtain
\begin{align}
    \overline{\D}_{11}^{r,3,2}
    &\le O_p\left(\sqrt{\frac{N}{n}}\right) + 
    \frac1N \sumi |\overline{\mfm}_i| \left|
    \frac1n \sum_j c_{i,j-1}^{-1} \E^{i,j-1}[\mu_{ij}^\star z_{ij}] \right| \sqrt{N}.
\end{align}
Plugging-in the expression \eqref{hm:mu.star.ij_def} of $\mu_{ij}^\star$ and then handling it as in the case of \eqref{hm:Dr.3-1}, we get $\overline{\D}_{11}^{r,3,2}=O_p(\sqrt{N/n})=o_p(1)$.

Thus we have obtained $\D_{11}=\D_{11}^\star+o_p(1)$, hence it remains to study $\D_{11}^\star$.
Let
\begin{equation}
    \overline{g}_i := \frac1T \int_0^T g(Y_i(t))dt.
\end{equation}
Then, by Lemma \ref{hm:aux.lem1},
\begin{equation}
    \sqrt{n}\bigg(\frac1n \sum_j g_{i,j-1} - \overline{g}_i\bigg)=\overline{\mfm}_i.
\end{equation}
Hence
\begin{align}
    \D_{11}^\star 
    &= \frac{1}{2\sqrt{N}} \sumi \Bigg\{
    \frac{1}{\sqrt{n}}\sum_j g_{i,j-1}(z_{ij}^2 -1)
    \nn\\
    &{}\qquad 
    -\bigg(\frac1n \sum_{k=1}^{n} g_{i,k-1}\bigg)
    \frac{1}{\sqrt{n}} \sum_j (z_{ij}^2 -1)
    \Bigg\} + o_p(1) \nn\\
    &= \frac{1}{2\sqrt{N}} \sumi \Bigg(
    \frac{1}{\sqrt{n}}\sum_j g_{i,j-1}(z_{ij}^2 -1) -\overline{g}_i
    \frac{1}{\sqrt{n}} \sum_j (z_{ij}^2 -1) \Bigg) + o_p(1).
\end{align}
Denote by $\sumi \zeta_i$ the first term on the rightmost side.

Toward the central limit theorem for $\D_{11}^\star$, we will show that
\begin{equation}\label{hm:mart.rep-4}
    |\E[\zeta_i]| \lesssim \frac{1}{\sqrt{nN}}. 
\end{equation}
The conditioning argument gives
\begin{align}
    \E[\zeta_i] &= -\frac{1}{2\sqrt{nN}}\sum_j \E\big[\overline{g}_i (z_{ij}^2-1)\big]
    \nn\\
    &= -\frac{1}{2\sqrt{nN}}\sum_j \E\left[\E\big[(\overline{g}_i-\E[\overline{g}_i]) (z_{ij}^2-1) \middle| (\tau_i,\vp_i) \big]\right].
\end{align}
Conditional on $(\tau_i,\vp_i)$, the variables $\overline{g}_i$ and $z_{ij}$ are functionals of $\{Y_i(0),w_i\}$; recall that $(\tau_i,\vp_i)$ and $\{Y_i(0),w_i\}$ are independent.
It\^{o}'s formula gives the expression
\begin{equation}
    z_{ij}^2-1 = \frac{2}{h}\int_j (w_i(s)-w_{i,j-1})dw_i(s).
\end{equation}
By applying the representation theorem \cite[Theorem 1.1.3]{Nua06} conditional on $(\tau_i,\vp_i)$, there exists an $(\mcf_{i,t})$-adapted $L^2$-process $\psi_i=\psi_{i,T}$ such that
\begin{align}
    \overline{g}_i-\E[\overline{g}_i] 
    %= \frac1T\int_0^T\left(g(X_i(t))-\E[g(X_i(t))]\right)dt
    = \int_0^T \psi_i(s)dw_i(s).
\end{align}
Therefore,
\begin{align}
    \E[\zeta_i] &= -\frac{1}{2\sqrt{nN}}\sum_j 
    \frac{2}{h}\E\left[\E\left[
    \int_0^T \psi_i(s)d\overline{w}_i(s) \int_j (w_i(s)-w_{i,j-1})dw_i(s)
    \middle| \,(\tau_i,\vp_i) \right]\right]
    \nn\\
    &= -\frac{1}{2\sqrt{nN}}\sum_j \frac{2}{h}
    \E\left[\int_j \E\left[\psi_i(s) (w_i(s)-w_{i,j-1})
    \middle| \,(\tau_i,\vp_i) \right]ds\right].
    \label{hm:mart.rep-2}
    % &= -\frac{1}{2\sqrt{nN}}\sum_j 
    % \frac{2}{h}\E\left[\int_j \E\left[\left(\psi_i(s)-\E^{i,j-1}[\psi_i(s)]\right) (w_i(s)-w_{i,j-1})
    % \middle| (\tau_i,\vp_i)\right]ds\right]
\end{align}
We can write 
\begin{align}
    & \E\left[\psi_i(s) (w_i(s)-w_{i,j-1})\middle| \,(\tau_i,\vp_i) \right]
    \nn\\
    &= \E\left[\left(\psi_i(s)-\E^{i,j-1}[\psi_i(s)]\right) (w_i(s)-w_{i,j-1}) \middle| \,(\tau_i,\vp_i) \right].
    \label{hm:mart.rep-1}
\end{align}
By the martingale representation theorem \cite[Theorem III.4.33 a)]{JacShi03}, 
%% Or, Corollary 14.5.4 in Cohen and Elliott.
there exists an $(\mcf_{i,t})_{t\le T}$-predictable matrix-valued process $\psi'_i$ for which the right-hand side of \eqref{hm:mart.rep-1} is written as
% (recall that $\overline{w}_i=(w_i,w_i')$ with $w_i$ and $w'_i$ being independent)
\begin{equation}
    \E\left[\int_{t_{j-1}}^s\psi'_i(u)dw_i(u)
    \int_{t_{j-1}}^s dw_i(u) \middle| \,(\tau_i,\vp_i) \right]
    = \E\left[\int_{t_{j-1}}^s\psi'_i(u) du \middle| \,(\tau_i,\vp_i) \right].
    \label{hm:mart.rep-3}
\end{equation}
Now, substituting \eqref{hm:mart.rep-3} into \eqref{hm:mart.rep-2} gives \eqref{hm:mart.rep-4}:
\begin{align}
    |\E[\zeta_i]| &= \left|-\frac{1}{2\sqrt{nN}}\sum_j \frac{2}{h}
    \E\left[\int_j \int_{t_{j-1}}^s\psi'_i(u) duds
    \right]\right| \lesssim \frac{1}{\sqrt{nN}}\sumj \frac1h h^2 = \frac{1}{\sqrt{nN}}.
\end{align}

Now \eqref{hm:mart.rep-4} leads to
\begin{equation}
    \D_{11}^\star = \sumi (\zeta_i-\E[\zeta_i]) + o_p(1)=:\sumi \widetilde{\zeta}_i + o_p(1).
\end{equation}
It is easy to see that for $K>2$, $\sumi \E[|\widetilde{\zeta}_i|^K] \lesssim N^{1-K/2}\to 0$, so that the Lyapunov condition holds. The convergence of the covariance $Q_{11}:=\sumi \E[\widetilde{\zeta}_i^{\otimes 2}]$ remains to be studied.

Observe that $Q_{11}=\sumi \E[\zeta_i^{\otimes 2}]+O(n^{-1})$ by \eqref{hm:mart.rep-4}.
Since $(\tau_i,\vp_i,Y_i)$, $i\ge 1$, are i.i.d., we have
\begin{align}\label{hm:mart.rep-5}
    \sumi \E[\zeta_i^{\otimes 2}]
    &= \frac{1}{4}\E\left[\{ M_{11,1} - \overline{g}_1 M_{11,0}\}^{\otimes 2}\right],
\end{align}
where $M_{11,1} := n^{-1/2}\sum_j g_{i,j-1}(z_{ij}^2 -1)$ and $M_{11,0} := n^{-1/2}\sum_j(z_{ij}^2 -1)$. 
% \begin{align}
%     M_{11,1} &:= \frac{1}{\sqrt{n}}\sum_j g_{i,j-1}(z_{ij}^2 -1), \nn\\
%     M_{11,0} &:= \frac{1}{\sqrt{n}}\sum_j (z_{ij}^2 -1).
% \end{align}
We can apply \cite[Theorem IX.7.28]{JacShi03} for the step process
\begin{equation}
    \overline{M}_{11}(t) := \sum _{j=1}^{[t/h]}\frac{1}{\sqrt{n}}
    \begin{pmatrix}
        g_{1,j-1} \\ 1
    \end{pmatrix}(z_{ij}^2-1),\qquad 0\le t\le T,
\end{equation}
to conclude that $\overline{M}_{11}(\cdot)$ converges $\mcf$-stable in law (with respect to the Skorokhod topology) to the mixed-normal limit
\begin{align}
    \overline{M}_{11,\infty}(t) 
    &=\left( M_{11,1,\infty}(t), M_{11,0,\infty}(t) \right)
    \nn\\
    &\sim \sqrt{2}
    \begin{pmatrix}
        T^{-1}\int_0^T g_1(t)^{\otimes 2}dt & \text{sym.} \\
        \overline{g}_1 & 1
    \end{pmatrix}^{1/2} \mathsf{Z}
\end{align}
defined on a suitable extended probability space, where $\mathsf{Z} \sim N_{p_{\eta_{1}}}(0,I_{p_{\eta_{1}}})$ is independent of $\mcf$; to apply the theorem, we set $\overline{w}_1$ to be a reference continuous local martingale, and make use of the representation theorem \cite[Theorem III.4.33 a)]{JacShi03} as before to verify the asymptotic orthogonality condition \cite[IX.7.31]{JacShi03}.
Since $\overline{g}_i$ is $\mcf$-measurable, the $\mcf$-stability of the convergence $\overline{M}_{11}(\cdot) \cil \overline{M}_{11,\infty}(\cdot)$ guarantees the joint weak convergence
\begin{equation}
    (\overline{M}_{11}(T),\overline{g}_1) \cil (\overline{M}_{11,\infty}(T),\overline{g}_1), \qquad n\to\infty.
\end{equation}
Then, the continuous mapping theorem gives
\begin{equation}
    M_{11,1} - \overline{g}_i M_{11,0} \cil M_{11,1,\infty}(T) - \overline{g}_1 M_{11,0,\infty}(T),\qquad n\to\infty.
\end{equation}
The moment boundedness
\begin{equation}
    \sup_n\E\left[|(\overline{M}_{11}(T),\overline{g}_1)|^K\right] < \infty
\end{equation}
for any $K>0$ is obvious. Hence, the sequence $( \{ M_{11,1} - \overline{g}_1 M_{11,0}\}^{\otimes 2} )_n$ is uniformly integrable, which ensures the convergence of moments. By \eqref{hm:mart.rep-5},
\begin{align}
    \sumi \E[\zeta_i^{\otimes 2}]
    &\to \frac14 \E\left[
    \left\{M_{11,1,\infty}(T) - \overline{g}_1 M_{11,0,\infty}(T)\right\}^{\otimes 2}
    \right]
    \nn\\
    &= \frac12 \E\left[
    \frac1T \int_0^T g_1(t)^{\otimes 2}dt - \overline{g}_1^{\otimes 2}
    \right] =: Q_{11,0},
\end{align}
where the limit is positive definite under Assumption \ref{hm:A_iden}(2).
% since $X_i$ is non-constant. 
Thus, we have arrived at
\begin{equation}\label{hm:s11-7}
    \D_{11} = \D_{11}^\star + o_p(1) \cil N_{p_{\eta}}(0,Q_{11,0}).
\end{equation}

\medskip

Now we turn to the \textit{quasi-observed information matrix}
\begin{equation}
    \Gam_{11} := -\frac{1}{nN}\p_{\eta}^2 \mbbh_{11}(\eta_{0})= -\frac{1}{nN}\p_{\eta}^2 \mbbh_{11}.
\end{equation}
We observe that
\begin{align}
    \Gam_{11} &= \frac{1}{2N}\sumi \Bigg\{\frac1n \sum_j (\p_{\eta}^2\log S)_{i,j-1}
    + Q_i^{-1} \frac1n \sum_j (\p_{\eta}^2(S^{-1}))_{i,j-1}\ny_{ij}^2
    \nn\\
    &{}\qquad - Q_i^{-2}\bigg( \frac1n \sum_j (\p_{\eta}(S^{-1}))_{i,j-1}\ny_{ij}^2 \bigg)^{\otimes 2}\Bigg\}
    \nn\\
    &= \frac{1}{2N}\sumi \Bigg\{\frac1n \sum_j (\p_{\eta}^2\log S)_{i,j-1}
    + \frac1n \sum_j (\p_{\eta}^2(S^{-1}))_{i,j-1} S_{i,j-1}z_{ij}^2
    \nn\\
    &{}\qquad - \bigg( \frac1n \sum_j (\p_{\eta}(S^{-1}))_{i,j-1} S_{i,j-1}z_{ij}^2 \bigg)^{\otimes 2}\Bigg\} + o_p(1)
    \nn\\
    &= \frac{1}{2} \Bigg\{ \frac{1}{nN} \sumi\sum_j g_{i,j-1}^{\otimes 2} 
    - \frac1N \sumi \bigg( \frac1n \sum_j g_{i,j-1}\bigg)^{\otimes 2}\Bigg\} + o_p(1).
    \nn
\end{align}
In the second equality of the above computations, we used \eqref{hm:tauinv.Q}, \eqref{hm:yij^2_se}, and also the same argument to derive $\D_{11}^{r,2}=o_p(1)$ (see \eqref{hm:Dr2_def}) to handle the reciprocal $Q_i^{-1}$.
Lemma \ref{hm:aux.lem1} concludes that
\begin{equation}\label{hm:s11-8}
    \Gam_{11} \cip Q_{11,0}.
\end{equation}

Under the regularity conditions, it is easy to see that
\begin{equation}\label{hm:s11-9}
    \sup_{\eta}\left|\frac{1}{nN}\p_{\eta}^3\mbbh_{11}(\eta)\right| = O_p(1).
\end{equation}

Now, we can complete the proof of the asymptotic normality of $\wh{\eta}$ in the standard manner.
Under the consistency, we may and do suppose that $\p_{\eta}\mbbh_{11}(\wh{\eta})=0$, so that the Taylor expansion combined with \eqref{hm:s11-7}, \eqref{hm:s11-8}, and \eqref{hm:s11-9} gives
\begin{align}\label{hm:se_eta.hat}
    \wh{u}_{\eta} := \sqrt{nN}(\wh{\eta} - \eta_{0}) 
    = Q_{11,0}^{-1}\D_{11} + o_p(1) \cil N_{p_{\eta}}\left(0,Q_{11,0}^{-1}\right).
\end{align}
This concludes the proof of Theorem \ref{hm:thm_diff.param}.

\medskip

Finally, we note that, based on what we have seen so far, it is easy to show that
\begin{align}\label{hm:hat-Q11}
    \wh{Q}_{11} := \frac{1}{2N}\sumi \bigg\{
    \frac1n \sum_j g_{i,j-1}(\wh{\eta})^{\otimes 2} - \bigg(\frac1n \sum_j g_{i,j-1}(\wh{\eta})\bigg)^{\otimes 2}
    \bigg\}
\end{align}
is a consistent estimator of $Q_{11,0}$.

%%%
\subsubsection{Random-effect distribution}
\label{hm:sec_diff.random.param}

This section aims to prove the asymptotic normality of $\wh{\theta}_\tau$.
Thanks to high-frequency data, we can consistently estimate each individual's random effect $\tau_i$ by $\taues_i:=\taues_i(\wh{\eta})$; recall the definition \eqref{hm:tau-hat}. A large value of $N$ enables us to make an inference on $\mcl(\tau_1)$.

Recall the definition $\mbbh_{12}(\vt_1)$ in \eqref{hm:def_QLF1-2}. 
Let $\mbbh_{12}(\theta_\tau):=\mbbh_{12}(\wh{\eta},\theta_\tau)$ with a slight abuse of notation, and write
\begin{equation}\label{hm:def_tau-hat}
    \wh{\tau}_i := \wh{\tau}_i(\wh{\eta}).
\end{equation}
By Assumption \ref{hm:A_rand.eff} and \eqref{hm:tau-moments}, we have
\begin{equation}
    \sup_{\theta_\tau}\left|\frac{1}{N}\p_{\theta_\tau}^3\mbbh_{12}(\theta_\tau)\right| = O_p(1).
\end{equation}
Introduce the log-likelihood function
\begin{equation}
    \ell_{12}(\theta_\tau) := \sumi \log f(\tau_i;\theta_\tau),
\end{equation}
which is, of course, unusable since $\tau_i$ are unobserved.
Suppose for a moment that
\begin{equation}\label{hm:1-2_goal}
    \del_{12} = \del_{12,N} := \max_{k=0,1,2}\sup_{\theta_\tau}\left| \frac{1}{\sqrt{N}}\left(
    \p_{\theta_\tau}^k \mbbh_{12}(\theta_\tau) - \p_{\theta_\tau}^k \ell_{12}(\theta_\tau)
    \right)\right| \cip 0.
\end{equation}
Then, we can proceed as if we observe $\tau_1,\dots,\tau_N$, so that we can apply the standard asymptotic theory for an i.i.d. sample under Assumption \ref{hm:A_rand.eff} to conclude the asymptotic normality:
\begin{align}\label{hm:AN_theta.tau}
    \wh{u}_{\theta_\tau}:=\sqrt{N}(\wh{\theta}_\tau - \theta_{\tau,0}) = \mci_{12}^{-1} \D_{12} + o_p(1)
    \cil N_{p_{\tau}}\left(0,\mci_{12}^{-1}\right),
\end{align}
where $\mci_{12}$ is given by \eqref{hm:def_I.12} and
\begin{align}
    \D_{12}=\D_{12}(\theta_{\tau,0}) &:= \frac{1}{\sqrt{N}} \p_{\theta_\tau} \mbbh_{12}.
\end{align}
Further, under Assumption \ref{hm:A_rand.eff}, it is straightforward to construct a consistent estimator of $\mci_{12}$: for example,
\begin{equation}
    \wh{\mci}_{12} := \frac{1}{N} \sumi \left\{\p_{\theta_\tau}\log f(\wh{\tau}_i;\wh{\theta}_{\tau})\right\}^{\otimes 2}
    \cip \mci_{12}.
\end{equation}

What remains is to show \eqref{hm:1-2_goal}.
By a direct expansion and what we have seen in Section \ref{hm:sec_diff.fixed.param},
\begin{align}
    \wh{\tau}_i
    &= \frac1n \sum_j \left(S^{-1}_{i,j-1} 
    + \frac{1}{\sqrt{nN}}\int_0^1 \p_{\eta}(S^{-1})_{i,j-1}\left(\eta_{0})+s(\wh{\eta}-\eta_{0})\right)ds\,[\wh{u}_{\eta}]
    \right)\,\ny_{ij}^2 \nn\\
    &= \frac1n \sum_j \left(S^{-1}_{i,j-1} 
    + \frac{1}{\sqrt{nN}} \overline{\mfb}_{ij}[\wh{u}_{\eta}]\right)\,\ny_{ij}^2 
    \nn\\
    &= \frac1n \sum_j (\tau_i + \sqrt{h}\,\overline{\mfm}_{ij}) 
    + \frac{1}{\sqrt{nN}} \frac1n \sum_j \overline{\mfb}_{ij} [\wh{u}_{\eta}]\,\ny_{ij}^2 
    \nn\\
    &= \tau_i + \sqrt{h}\,\overline{\mfm}_{i} + \frac{1}{\sqrt{nN}} \frac1n \sum_j \overline{\mfb}_{ij} [\wh{u}_{\eta}]\,\ny_{ij}^2.
    \label{hm:tau.hat-s.e.}
\end{align}
Hence,
\footnote{
Recalling \eqref{hm:max.hc.mi}, we also have for any $\mathsf{c}>0$, $\max_{i\le N}|h^{\mathsf{c}-1/2}(\wh{\tau}_i - \tau_i)| \lesssim (1+N^{-1/2}|\wh{\eta}|) \max_{i\le N}|h^{\mathsf{c}}\,\overline{\mfm}_{i}|=O_p(1)o_p(1)=o_p(1)$.
}
\begin{equation}\label{hm:1-2-1}
    \left|\sqrt{n}(\wh{\tau}_i - \tau_i)\right|
    \lesssim |\overline{\mfm}_i| + \frac{|\wh{u}_{\eta}|}{\sqrt{N}} \frac1n \sum_j \ny_{ij}^2
    \lesssim |\overline{\mfm}_i| \left(1 + \frac{|\wh{u}_{\eta}|}{\sqrt{N}} \right).
\end{equation}

We introduce the events (in $\mcf$): for $K>0$,
\begin{align}
    A_{12,1} &:= \{ |\wh{u}_{\eta}| \le K\}, \nn\\
    A_{12,2} &:= \left\{ \max_{1\le i\le N}\left|\frac{\wh{\tau}_i - \tau_i}{\tau_i}\right| \le \frac12\right\}.
\end{align}
Fix any $\ep,\ep'>0$ and observe that
\begin{align}\label{hm:1-2-2}
    \pr[\del_{12} > \ep] 
    &\le \pr[A_{12,1}^c] + \pr[A_{12,1} \cap A_{12,2}^c]
    \nn\\
    &{}\qquad + \pr\left[A_{12,1} \cap A_{12,2} \cap \{\del_{12} > \ep\}\right].
\end{align}
Since $\wh{u}_{\eta}=O_p(1)$, we can take $K>0$ large enough to ensure that $\sup_{n,N}\pr[A_{12,1}^c]<\ep'$.
With this $K$, by \eqref{hm:1-2-1} and the Markov inequality we have for any $L\ge 2$,
\begin{align}
    \pr[A_{12,1} \cap A_{12,2}^c] &\le \pr\left[
    \max_{i\le N} \frac{|\overline{\mfm}_i|}{\tau_i} \ge C\sqrt{n}
    \right] \nn\\
    &\le \sumi \pr\left[\frac{|\overline{\mfm}_i|}{\tau_i} \ge C\sqrt{n}\right]
    \lesssim \frac{N}{n^{L/2}} \to 0.
\end{align}
Finally, by Assumption \ref{hm:A_rand.eff}, it is easy to see that
% the last term in the right-hand side of \eqref{hm:1-2-2} can be bounded by
\begin{align}
    \pr\left[A_{12,1} \cap A_{12,2} \cap \{\del_{12} > \ep\}\right]
    \le 
    \pr\left[\sqrt{\frac{N}{n}}\,\frac1N \sumi |\overline{\mfm}_i| > \ep\right]
    \lesssim \sqrt{\frac{N}{n}} \to 0.
\end{align}
Summarizing these estimates gives $\limsup_{n,N} \pr[\del_{12} > \ep] <\ep'$, concluding \eqref{hm:1-2_goal} and thus followed by \eqref{hm:AN_theta.tau}.

% One important point is that for each $i$, we have the uniform-in-$\eta$ tightness:
% \begin{equation}
%     \sup_{\eta}\left|\sqrt{n}(\wh{\tau}_i(\eta) - \tau_i)\right|=O_p(1)
% \end{equation}

%%%
\subsubsection{Joint asymptotic normality}

% This section completes the proof of Theorem \ref{hm:thm_diff.param}.
Having obtained the asymptotic linear representations \eqref{hm:s11-9} and \eqref{hm:AN_theta.tau}, it remains to show the asymptotic orthogonality, hence the asymptotic independence of $\wh{\eta}$ and $\wh{\theta}_\tau$. We only need to observe that the covariance tends to $0$.

By what we have seen in Sections \ref{hm:sec_diff.fixed.param} and \ref{hm:sec_diff.random.param}, in particular the stochastic expansions \eqref{hm:se_eta.hat} and \eqref{hm:AN_theta.tau}, the computation of the covariance of the $\D_{11}$ and $\D_{12}$ boils down to that of
\begin{align}
    \cov[\zeta_i,\zeta_i'] 
    &= \frac12\,
    \cov\left[ M_{11,1} - \overline{g}_1 M_{11,0},\, \p_{\theta_\tau}\log f(\tau_1;\theta_{\tau,0})\right].
\end{align}
Indeed, it can be seen that $\lim_{n,N}\cov[\zeta_i,\zeta_i']=0$ ($p_{\eta}\times p_{\tau}$-zero matrix) in a similar manner to the previous $\mcf$-stable convergence argument of $(M_{11,0},M_{11,1})$ combined with the uniform integrability that we have used to obtain the limit of the expectation in \eqref{hm:mart.rep-5}.
This completes the proof of Theorem \ref{hm:thm_diff.param}.

%%%%%%%%%%%%%%%
%%%%%%%%%%%%%%%
\subsection{Proof of Theorem \ref{hm:thm_drif.param}}

% (HM) Better to check if $\mcl(\vp_i)$ really needs to be Gaussian or not.

To study the asymptotic behavior of second-stage marginal quasi-likelihood $\mbbh_{2}(\vt_2)$ defined by \eqref{hm:joint.qlm-2}, we will proceed as follows.
\begin{enumerate}
    \item First, we look at the stochastic expansions of $\wh{M}_i$ and $\wh{v}_i$:
    \begin{align}
        \wh{M}_i = M_{i,T} + \mathsf{R}^M_i, 
        \qquad 
        \wh{v}_i = v_{i,T} + \mathsf{R}^v_i,
    \end{align}
    for specific leading terms $M_{i,T}$ and $v_{i,T}$ (recall their definitions in \eqref{hm:M&v_def}) and some remainder (negligible) terms $\mathsf{R}^M_i$ and $\mathsf{R}^v_i$.
    
    \item Second, we estimate the gap between $\mbbh_2(\vt_2)$ and
    \begin{align}
        \mbbh^\star_{2}(\vt_2) 
        &:= \sum_i \log \phi_{p_{\vp}}\big(M_{i,T}^{-1} v_{i,T} ;\, \mu,\, M_{i,T}^{-1}+\Sig\big)
        \nn\\
        &= \sum_i \log \phi_{p_{\vp}}\big(M_{i,T}^{-1} v_{i,T} ;\, (\vp_f,0),\, M_{i,T}^{-1}+\diag(O,\Sig_r)\big)
        \label{hm:joint.qlm-2.star}
    \end{align}
    together with their derivatives, concluding that they are negligible uniformly in $\vt_2$.
    
    \item Then, we conclude the asymptotic normality of $\wh{\vt}_2$ by investigating $\mbbh^\star_{2}(\vt_2)$ instead of $\mbbh_{2}(\vt_2)$.
\end{enumerate}

%%%
\subsubsection{Stochastic expansions of $\wh{M}_i$ and $\wh{v}_i$}
\label{hm:sec_se-hM.hv}

Recalling the chain of equalities \eqref{hm:tau.hat-s.e.}, we obtain
\begin{align}
\wh{M}_i 
&= \wh{\tau}_i\,h \sum_j \wh{S}_{i,j-1}^{-1} a_{i,j-1}^{\otimes 2} 
\nn\\
&= \left(\tau_i + \sqrt{h}\,\overline{\mfm}_{i} + \frac{1}{\sqrt{nN}} \overline{\mfm}_{i} \,[\wh{u}_{\eta}]\right)\, h \sum_j \left(S^{-1}_{i,j-1} + \frac{1}{\sqrt{nN}} \overline{\mfb}_{ij}[\wh{u}_{\eta}]\right) a_{i,j-1}^{\otimes 2} 
\nn\\
&= \tau_i h \sum_j S_{i,j-1}^{-1} a_{i,j-1}^{\otimes 2} + \check{\mathsf{R}}^M_i,
\end{align}
where
\begin{equation}\label{hm:drif_p1.1}
    |\check{\mathsf{R}}^M_i| \le \sqrt{h}\, |\overline{\mfm}_{i}| \left(1+\frac{1}{\sqrt{N}}|\wh{u}_\eta|\right).
\end{equation}
Then, applying \eqref{hm:aux.lem1-0} in Lemma \ref{hm:aux.lem1} gives
\begin{equation}
    \wh{M}_i = \tau_i \int_0^T S_i(t)^{-1} a_i(t)^{\otimes 2} dt + \mathsf{R}^M_i = M_{i,T} + \mathsf{R}^M_i
\end{equation}
with the upper bound of $|\mathsf{R}^M_i|$ having the same form as in \eqref{hm:drif_p1.1}.

\smallskip

Turning to $\wh{v}_i$, using the second-order expansion
\begin{align}
    \wh{S}^{-1}_{i,j-1}
    &= S^{-1}_{i,j-1} + \frac{1}{\sqrt{nN}} (\p_\eta (S^{-1}))_{i,j-1}[\wh{u}_{\eta}] + \frac{1}{nN}\overline{\mfb}_{ij}[\wh{u}_{\eta}^{\otimes 2}],
\end{align}
we get
\begin{align}
\wh{v}_i 
&= \sum_j \wh{S}_{i,j-1}^{-1} a_{i,j-1} \D_j Y_i \nn\\
&= \sum_j S_{i,j-1}^{-1} a_{i,j-1} \D_j Y_i + \frac{1}{\sqrt{nN}} \underbrace{\left(\sum_j (\p_\eta (S^{-1}))_{i,j-1} a_{i,j-1} \D_j Y_i\right)}_{=: v_{1,i}}\, [\wh{u}_{\eta}]
\nn\\
&{}\qquad + \underbrace{\left(\frac{1}{nN} \sum_j \overline{\mfb}_{ij} a_{i,j-1} \D_j Y_i\right)}_{=: v_{2,i}}\, [\wh{u}_{\eta}^{\otimes 2}].
\end{align}
In a similar manner to \cite[Lemma 2]{delattre2018parametric}, we can show that
\begin{equation}
    \frac{1}{\sqrt{h}} \left( \sum_j \mathsf{f}_{i,j-1}\D_j Y_i - \int_0^T \mathsf{f}(Y_i(t)) dY_i(t) \right) = \overline{\mfm}_{i}
\end{equation}
for any measurable $\mathsf{f}(y)$ satisfying $\sup_i \sup_{t\le T}\E[|\mathsf{f}(Y_i(t))|^K]<\infty$ for every $K>0$. This implies that $v_{1,i}=\overline{\mfm}_{i}$. It is easy to see that
\begin{equation}
    v_{2,i} = \frac{\sqrt{h}}{nN} \sum_j \overline{\mfb}_{ij} a_{i,j-1} \ny_{ij} = \frac{\sqrt{h}}{N} \, \overline{\mfm}_{i}.
\end{equation}
It follows that
\begin{align}
    \wh{v}_i &= \sum_j S_{i,j-1}^{-1} a_{i,j-1} \D_j Y_i 
    + \frac{1}{\sqrt{nN}} \overline{\mfm}_{i}\, [\wh{u}_{\eta}]
    + \frac{\sqrt{h}}{N} \, \overline{\mfm}_{i} [\wh{u}_{\eta}^{\otimes 2}]
    \nn\\
    &= \int_0^T S_i(t)^{-1} a_i(t) dY_i(t) + \mathsf{R}^v_i = v_{i,T} + \mathsf{R}^v_i,
\end{align}
where
\begin{equation}
    |\mathsf{R}^v_i| \le \sqrt{h}\, |\overline{\mfm}_{i}| \left(1+\frac{1}{\sqrt{nN}}|\wh{u}_\eta|+\frac{\sqrt{h}}{N}|\wh{u}_\eta|^2\right).
\end{equation}

In sum, we obtained
\begin{align}
    & |\wh{M}_i - M_{i}| \le \sqrt{h}\, |\overline{\mfm}_{i}| \left(1+\frac{1}{\sqrt{N}}|\wh{u}_\eta|\right), 
    \label{hm:added_M.hat-M_gap}\\
    & |\wh{v}_i - v_{i}| \le \sqrt{h}\, |\overline{\mfm}_{i}| \left(1+\frac{1}{\sqrt{nN}}|\wh{u}_\eta|+\frac{\sqrt{h}}{N}|\wh{u}_\eta|^2\right).
\end{align}

%%%
\subsubsection{Estimating the gap}
% \subsubsection{Estimating the gap between $\mbbh_2(\vt_2)$ and $\mbbh^\star_{2}(\vt_2)$}

From the expressions \eqref{hm:joint.qlm-2} and \eqref{hm:joint.qlm-2.star}, some calculations give the estimate
\begin{align}
    & \sup_{\vt_2}\left|\frac{1}{\sqrt{N}}\left( \mbbh_{2}(\vt_2) - \mbbh^\star_{2}(\vt_{2}) \right)\right| \nn\\
    &\lesssim \frac1N \sum_i \sqrt{N}\left|\log|\wh{M}_i| - \log|M_i| \right|
    \nn\\
    &{}\qquad + \sup_{\Sig_r}\frac1N \sum_i \sqrt{N}\left|
    \log|\Sig \wh{M}_i + I_{p_\vp}| - \log|\Sig M_i + I_{p_\vp}| \right|
     \nn\\
    &{}\qquad + \sup_{\vt_2}\frac1N \sum_i \sqrt{N}\Big|
    \mathrm{Trace}\left(
    (\Sig+\wh{M}_i^{-1})^{-1}(\wh{M}_i^{-1}\wh{v}_i - \mu)^{\otimes 2}
    \right)
    \nn\\
    &{}\qquad \qquad 
    \mathrm{Trace}\left(
    (\Sig+M_i^{-1})^{-1}(M_i^{-1} v_i - \mu)^{\otimes 2}
    \right)\Big|
    \nn\\
    &=: \mcg_{1,1} + \mcg_{1,2} + \mcg_2.
\end{align}
To deduce that all three terms on the rightmost side are negligible, we assume without loss of generality that the variables are scalar.

Observe that
\begin{align}
    \mcg_{1,1} &= \frac1N \sum_i \sqrt{N}\left|
    \log\left(1+\frac{\wh{M}_i - M_i}{M_i}\right)\right|.
\end{align}
By \eqref{hm:sampling.design}, we have $N/n^{\mathsf{a}''} \lesssim 1$ ($\mathsf{a}''\in(0,1)$). 
By \eqref{hm:max.hc.mi} and \eqref{hm:added_M.hat-M_gap},
\begin{equation}
    \max_{i\le N}|\wh{M}_i-M_i| =o_p(h^{1/2-\mathsf{c}})
\end{equation}
for any (small) $\mathsf{c}>0$, hence in particular taking $\mathsf{c}=(1-\mathsf{a}'')/2$ gives
\begin{equation}\label{hm:drif-p1}
    \sqrt{N} \max_{i\le N}|\wh{M}_i-M_i| = o_p\big(\sqrt{N} h^{1/2-\mathsf{c}}\big) 
    = o_p\big(N/n^{\mathsf{a}''}\big) = o_p(1).
\end{equation}
This and Assumption \ref{hm:A_drift} imply that $\pr[B_n]\to 0$ where
\begin{equation}
    B_n := \left\{
    \max_{i\le N} \left| \frac{\wh{M}_i - M_i}{M_i} \right| > h^{\mathsf{a}''/2}
    \right\}.
\end{equation}
For any $\mathsf{Q}>0$ and sufficiently large $n$,
\begin{equation}
    \pr[\mcg_{1,1} > \mathsf{Q}] \le \pr[B_n] + \pr\left[B_n^c \cap\{\mcg_{1,1} > \mathsf{Q}\}\right] \to 0,
\end{equation}
concluding that $\mcg_{1,1}=o_p(1)$. 
Similarly, we can prove that $\mcg_{1,2}=o_p(1)$ (much easier since $\Sig \wh{M}_i + I_{p_\vp} \ge I_{p_\vp} > O$). 
As for $\mcg_{2}$, under Assumption \ref{hm:A_drift}, some manipulations and estimates together with \eqref{hm:drif-p1} give
\begin{align}\label{hm:drif-p2}
    \mcg_{2} &\lesssim o_p(1) \,
    \frac1N \sum_i |\overline{\mfm}_i| |M_i^{-1}| |\wh{M}_i^{-1}| (1+|M_i^{-1}|)
    \nn\\
    & \le o_p(1)\, \frac1N \sum_i |\overline{\mfm}_i| = o_p(1).
\end{align}
Thus, it follows that
\begin{equation}\label{hm:gap.estimate_0}
    \sup_{\vt_2}\left|\frac{1}{\sqrt{N}}\left( \mbbh_{2}(\vt_2) - \mbbh^\star_{2}(\vt_{2}) \right)\right| = o_p(1).
\end{equation}
Analogously, we can also deduce that
\begin{equation}\label{hm:gap.estimate_123}
    \sup_{\vt_2}\left|\frac{1}{\sqrt{N}}\left( \p_{\vt_2}^k \mbbh_{2}(\vt_2) - \p_{\vt_2}^k \mbbh^\star_{2}(\vt_{2}) \right)\right| = o_p(1)
\end{equation}
for $k=1,2,3$.

%%%
\subsubsection{Asymptotic normality of $\wh{\vt}_2$}

Now we are in the position to conclude the consistency and the asymptotic normality of $\wh{\vt}_2$. 
Building on \eqref{hm:gap.estimate_0} and \eqref{hm:gap.estimate_123}, under Assumption \ref{hm:A_drift} we can follow the standard $M$-estimation machinery for the i.i.d. model associated with the random function $\mbbh^\star_{2}(\vt_{2})$ (see, for example, \cite[Chapter 5]{vdV98}). 
Finally, to derive the form of the asymptotic covariance matrix, we can follow the same line as in \cite[Section 6.2]{DelGenLar18-2}.

\end{appendix}

\subsection*{Acknowledgement}
We are grateful to the INRAE MIGALE bioinformatics platform \url{https://migale.inrae.fr} for providing computational resources.

This work was partially supported by JST CREST Grant Number JPMJCR2115, Japan, and by JSPS KAKENHI Grant Numbers 23K22410 and 24K21516 (HM).

\bibliographystyle{abbrv} 
\bibliography{ql-sdeme-ejs}

@article{Ada73,
	author = {Adams, R. A.},
	fjournal = {Transactions of the American Mathematical Society},
	journal = {Trans. Amer. Math. Soc.},
	pages = {401--429},
	title = {Some integral inequalities with applications to the imbedding of {S}obolev spaces defined over irregular domains},
	volume = {178},
	year = {1973},
    mrnumber  = {0322494},
}

@article{AsaBolDigWal20,
	author = {Asar, \"{O}zg\"{u}r and Bolin, David and Diggle, Peter J. and Wallin, Jonas},
	date-added = {2022-08-22 12:34:21 +0900},
	date-modified = {2022-08-22 12:34:21 +0900},
	fjournal = {Journal of the Royal Statistical Society. Series C. Applied Statistics},
	issn = {0035-9254},
	journal = {J. R. Stat. Soc. Ser. C. Appl. Stat.},
	mrclass = {Expansion},
	mrnumber = {4166856},
	number = {5},
	pages = {1015--1065},
	title = {Linear mixed effects models for non-{G}aussian continuous repeated measurement data},
	volume = {69},
	year = {2020},
}

@article{bladt2016simulation,
  title={Simulation of multivariate diffusion bridges},
  author={Bladt, Mogens and Finch, Samuel and S{\o}rensen, Michael},
  journal={Journal of the Royal Statistical Society Series B: Statistical Methodology},
  volume={78},
  number={2},
  pages={343--369},
  year={2016},
  publisher={Oxford University Press},
  mrnumber={},
}

@article {BonKriGho10,
    AUTHOR = {Bondell, Howard D. and Krishna, Arun and Ghosh, Sujit K.},
     TITLE = {Joint variable selection for fixed and random effects in
              linear mixed-effects models},
   JOURNAL = {Biometrics},
  FJOURNAL = {Biometrics. Journal of the International Biometric Society},
    VOLUME = {66},
      YEAR = {2010},
    NUMBER = {4},
     PAGES = {1069--1077},
      ISSN = {0006-341X},
   MRCLASS = {Expansion},
  MRNUMBER = {2758494},
}

@article{DelGenLar18-2,
	author = {Delattre, Maud and Genon-Catalot, Valentine and Lar\'{e}do, Catherine},
	date-added = {2021-10-26 20:03:45 +0900},
	date-modified = {2021-10-26 20:03:45 +0900},
	fjournal = {Metrika. International Journal for Theoretical and Applied Statistics},
	issn = {0026-1335},
	journal = {Metrika},
	mrclass = {62M05 (60H10 62F10)},
	mrnumber = {3854351},
	number = {8},
	pages = {953--983},
	title = {Approximate maximum likelihood estimation for stochastic differential equations with random effects in the drift and the diffusion},
	volume = {81},
	year = {2018},
}

@article{delattre2018parametric,
  title={Parametric inference for discrete observations of diffusion processes with mixed effects},
  author={Delattre, Maud and Genon-Catalot, Valentine and Lar{\'e}do, Catherine},
  journal={Stochastic processes and their Applications},
  volume={128},
  number={6},
  pages={1929--1957},
  year={2018},
  publisher={Elsevier},
  mrnumber  = {3797649},
}

@article{delattre2013maximum,
  title={Maximum likelihood estimation for stochastic differential equations with random effects},
  author={Delattre, Maud and GENON-CATALOT, VALENTINE and Samson, Adeline},
  journal={Scandinavian Journal of Statistics},
  volume={40},
  number={2},
  pages={322--343},
  year={2013},
  publisher={Wiley Online Library},
  mrnumber  = {3050196},
}

@article{delattre2015estimation,
  title={Estimation of population parameters in stochastic differential equations with random effects in the diffusion coefficient},
  author={Delattre, Maud and Genon-Catalot, Valentine and Samson, Adeline},
  journal={ESAIM: Probability and Statistics},
  volume={19},
  pages={671--688},
  year={2015},
  publisher={EDP Sciences},
  mrnumber  = {3433432},
}

@article{dion2019mixedsde,
  title={Mixedsde: a R package to fit mixed stochastic differential equations},
  author={Dion, Charlotte and Hermann, Simone and Samson, Adeline},
  journal={The R Journal},
  volume={11},
  number={1},
  year={2019},
  mrnumber={},
}

@article{ditlevsen2005mixed,
  title={Mixed effects in stochastic differential equation models},
  author={Ditlevsen, Susanne and De Gaetano, Andrea},
  journal={REVSTAT-Statistical Journal},
  volume={3},
  number={2},
  pages={137--153},
  year={2005},
  mrnumber  = {2259358},
}

@article{ditlevsen2013introduction,
  title={Introduction to stochastic models in biology},
  author={Ditlevsen, Susanne and Samson, Adeline},
  journal={Stochastic biomathematical models: with applications to neuronal modeling},
  pages={3--35},
  year={2013},
  publisher={Springer},
  mrnumer={},
}

@article{donnet2013review,
  title={A review on estimation of stochastic differential equations for pharmacokinetic/pharmacodynamic models},
  author={Donnet, Sophie and Samson, Adeline},
  journal={Advanced drug delivery reviews},
  volume={65},
  number={7},
  pages={929--939},
  year={2013},
  publisher={Elsevier},
  mrnumber={}
}

@article{EguMas19,
	author = {Eguchi, Shoichi and Masuda, Hiroki},
	date-added = {2023-08-01 04:03:55 +0200},
	date-modified = {2023-08-01 04:03:55 +0200},
	fjournal = {Statistical Inference for Stochastic Processes. An International Journal Devoted to Time Series Analysis and the Statistics of Continuous Time Processes and Dynamical Systems},
	issn = {1387-0874},
	journal = {Stat. Inference Stoch. Process.},
	mrclass = {60J60 (62F10 62F12)},
	mrnumber = {3996024},
	number = {3},
	pages = {383--430},
	title = {Data driven time scale in {G}aussian quasi-likelihood inference},
	volume = {22},
	year = {2019},
}

@article {FanLi12,
    AUTHOR = {Fan, Yingying and Li, Runze},
     TITLE = {Variable selection in linear mixed effects models},
   JOURNAL = {Ann. Statist.},
  FJOURNAL = {The Annals of Statistics},
    VOLUME = {40},
      YEAR = {2012},
    NUMBER = {4},
     PAGES = {2043--2068},
      ISSN = {0090-5364},
   MRCLASS = {62J05 (62F10 62F12 62J07)},
  MRNUMBER = {3059076},
}

@article{FujMas23,
	author = {Fujinaga, Yuki and Masuda, Hiroki},
	date-added = {2023-12-04 12:45:26 +0900},
	date-modified = {2023-12-04 12:45:26 +0900},
	fjournal = {Japanese Journal of Statistics and Data Science},
	issn = {2520-8756},
	journal = {Jpn. J. Stat. Data Sci.},
	mrclass = {62J02 (62F12)},
	mrnumber = {4665822},
	number = {2},
	pages = {669--704},
	title = {Mixed-effects location-scale model based on generalized hyperbolic distribution},
	volume = {6},
	year = {2023},
}

@article{GenLar16,
	author = {Genon-Catalot, V. and Lar\'{e}do, C.},
	date-added = {2023-12-08 16:33:34 +0900},
	date-modified = {2023-12-08 16:33:58 +0900},
	fjournal = {Statistics. A Journal of Theoretical and Applied Statistics},
	issn = {0233-1888},
	journal = {Statistics},
	mrclass = {62M05 (60J60 62F12)},
	mrnumber = {3522386},
	number = {5},
	pages = {1014--1035},
	title = {Estimation for stochastic differential equations with mixed effects},
	volume = {50},
	year = {2016},
}

@article{GroSamDit20,
	author = {Gro{\ss}e Ruse, Mareile and Samson, Adeline and Ditlevsen, Susanne},
	date-added = {2021-10-26 15:42:16 +0900},
	date-modified = {2021-10-26 19:34:16 +0900},
	fjournal = {Journal of the Royal Statistical Society. Series C. Applied Statistics},
	issn = {0035-9254},
	journal = {J. R. Stat. Soc. Ser. C. Appl. Stat.},
	mrclass = {Expansion},
	mrnumber = {4052804},
	number = {1},
	pages = {167--193},
	title = {Inference for biomedical data by using diffusion models with covariates and mixed effects},
	volume = {69},
	year = {2020}
}

@book{JacShi03,
	author = {Jacod, Jean and Shiryaev, Albert N.},
	date-added = {2019-05-01 23:13:10 +0900},
	date-modified = {2019-05-01 23:13:10 +0900},
	edition = {Second},
	isbn = {3-540-43932-3},
	mrclass = {60-02 (60F17 60G48 60H05)},
	mrnumber = {1943877 (2003j:60001)},
	mrreviewer = {Dominique L{\'e}pingle},
	pages = {xx+661},
	publisher = {Springer-Verlag, Berlin},
	series = {Grundlehren der Mathematischen Wissenschaften [Fundamental Principles of Mathematical Sciences]},
	title = {Limit theorems for stochastic processes},
	volume = {288},
	year = {2003},
}

@article{KamUch15,
	author = {Kamatani, Kengo and Uchida, Masayuki},
	date-added = {2023-11-30 10:21:11 +0900},
	date-modified = {2023-11-30 10:21:11 +0900},
	fjournal = {Statistical Inference for Stochastic Processes. An International Journal Devoted to Time Series Analysis and the Statistics of Continuous Time Processes and Dynamical Systems},
	journal = {Stat. Inference Stoch. Process.},
	number = {2},
	pages = {177--204},
	title = {Hybrid multi-step estimators for stochastic differential equations based on sampled data},
	volume = {18},
	year = {2015},
    mrnumber = {}
}

@article{narci2022inference,
  title={Inference in Gaussian state-space models with mixed effects for multiple epidemic dynamics},
  author={Narci, Romain and Delattre, Maud and Lar{\'e}do, Catherine and Vergu, Elisabeta},
  journal={Journal of Mathematical Biology},
  volume={85},
  number={4},
  pages={40},
  year={2022},
  publisher={Springer},
  mrnumber={},
}

@article{nie2007convergence,
  title={Convergence rate of MLE in generalized linear and nonlinear mixed-effects models: Theory and applications},
  author={Nie, Lei},
  journal={Journal of Statistical Planning and Inference},
  volume={137},
  number={6},
  pages={1787--1804},
  year={2007},
  publisher={Elsevier},
  mrnumber = {},
}

@book{Nua06,
  title={The Malliavin Calculus and Related Topics},
  author={Nualart, D.},
  isbn={9783540283294},
  lccn={2005935446},
  series={Probability and Its Applications},
  year={2006},
  publisher={Springer Berlin Heidelberg},
}

@misc{picchiniWeb,
    title = {Stochastic differential equations mixed-effects models},
    note = {\url{https://umbertopicchini.github.io/sdemem/}},
    author = {Picchini, Umberto}
}

@article{UchYos12,
	author = {Uchida, Masayuki and Yoshida, Nakahiro},
	date-added = {2023-11-30 10:21:41 +0900},
	date-modified = {2023-11-30 10:21:41 +0900},
	fjournal = {Stochastic Processes and their Applications},
	journal = {Stochastic Process. Appl.},
	number = {8},
	pages = {2885--2924},
	title = {Adaptive estimation of an ergodic diffusion process based on sampled data},
	volume = {122},
	year = {2012},
    mrnumber  = {2931346},
}

@article{UchYos13,
	author = {Uchida, Masayuki and Yoshida, Nakahiro},
	date-added = {2023-08-07 10:16:54 +0200},
	date-modified = {2023-08-07 10:16:54 +0200},
	fjournal = {Stochastic Processes and their Applications},
	journal = {Stochastic Process. Appl.},
	number = {7},
	pages = {2851--2876},
	title = {Quasi likelihood analysis of volatility and nondegeneracy of statistical random field},
	volume = {123},
	year = {2013},
    mrnumber = {},
}

@book{vdV98,
	author = {van der Vaart, A. W.},
	date-added = {2022-04-11 09:22:38 +0900},
	date-modified = {2022-04-11 09:22:48 +0900},
	isbn = {0-521-49603-9; 0-521-78450-6},
	mrclass = {62-02 (62E20 62F05 62F12 62G07 62G09 62G20)},
	mrnumber = {1652247},
	mrreviewer = {Nancy Reid},
	pages = {xvi+443},
	publisher = {Cambridge University Press, Cambridge},
	series = {Cambridge Series in Statistical and Probabilistic Mathematics},
	title = {Asymptotic statistics},
	volume = {3},
	year = {1998},
}

@article {YosYos24,
    AUTHOR = {Yoshida, Junichiro and Yoshida, Nakahiro},
     TITLE = {Quasi-maximum likelihood estimation and penalized estimation
              under non-standard conditions},
   JOURNAL = {Ann. Inst. Statist. Math.},
  FJOURNAL = {Annals of the Institute of Statistical Mathematics},
    VOLUME = {76},
      YEAR = {2024},
    NUMBER = {5},
     PAGES = {711--763},
      ISSN = {0020-3157},
   MRCLASS = {62F10 (62E20 62F12)},
  MRNUMBER = {4798671},
}

\end{document}